\long\def\comment#1\endcomment{\blue{#1}}
\long\def\comment#1\endcomment{}
\def\a{\alpha}
\def\b{{\beta}}
\def\p{{\prime}}
\def \A{{\mathbb A}}
\def \I{{\text{i}}}
\def \bs{{\backslash}}
\def \CO{{\mathcal O}}
\def \C{{\mathcal C}}
\def \CC{{\mathbb C}}
\def \QQ{{\mathbb Q}}
\def \D{{\mathbb D}}
\def \M{{\mathcal M}}
\def \Z{{\mathcal Z}}
\def \K{{\mathcal K}}
\def \ZZ{{\mathbb Z}}
\def \PP{{\mathbb P}}
\def \HH{{\mathbb H}}
\def \RR{{\mathbb R}}
\def \Dd{{\mathcal D}}
\def\CH{\textup{CH}}
\def\cl{\textup{cl}}
\def\Tot{{\bf s}}
\def\del{{\partial}}
\def\ol{\overline}
\def\ul{\underline}
\def\Gy{{\text{Gy}}}
\def\MHS{{\text{MHS}}}
\def\Spec{\textup{Spec}}
\def\Ext{{\textup{Ext}}}
\def\coker{{\textup{coker}}}
\def\alg{\textup{alg}}
\def\dec{\textup{dec}}
\def\ind{\textup{ind}}
\def\et{\textup{et}}
\def\IM{\textup{im}}
\newtheorem{theorem}[equation]{Theorem}
\newtheorem{cor}[equation]{Corollary}
\newtheorem{prop}[equation]{Proposition}
\theoremstyle{definition}
\newtheorem{conj}[equation]{Conjecture}
\newtheorem{defn}[equation]{Definition}
\newtheorem{ex}[equation]{Example}
\newtheorem{rem}[equation]{Remark}
\numberwithin{equation}{section} 
\numberwithin{figure}{section}
\begin{document}

\title[Beilinson's Hodge Conjecture]{Beilinson's Hodge Conjecture for Smooth Varieties}

\author{Rob de Jeu}

\address{Faculteit Exacte Wetenschappen\\  Afdeling Wiskunde \\
VU University Amsterdam \\ The Netherlands}

\email{jeu@few.vu.nl}

\author{James D. Lewis}

\address{632 Central Academic Building\\
University of Alberta\\
Edmonton, Alberta T6G 2G1, CANADA}

\email{lewisjd@ualberta.ca}

\date{\today}

\thanks{The authors are grateful to the Netherlands
Organisation for Scientific Research (NWO) 
{for financially supporting a visit in November 2008 to VU University Amsterdam
by James Lewis.}
The second author also acknowledges partial {support} by a grant from the Natural Sciences
and Engineering Research Council of Canada.}

\subjclass[2000]{Primary: 14C25, 19E15;  Secondary: 14C30}

\keywords{Chow group, algebraic cycle, regulator, Beilinson-Hodge conjecture, Abel-Jacobi map,
Deligne cohomology}

\renewcommand{\abstractname}{Abstract} 

\begin{abstract}  
Let $U/\CC$ be a smooth quasi-projective variety of dimension $d$, 
$\CH^{r}(U,m) $ Bloch's higher Chow group, and $\cl_{r,m} : 
\CH^{r}(U,m)\otimes \QQ \to 
\hom_{\MHS}\big(\QQ(0),H^{2r-m}(U,\QQ(r))\big)$
the cycle class map. Beilinson once conjectured $\cl_{r,m}$
to be surjective \cite{Be}; however Jannsen was the first to
find a counterexample in the case $m=1$ \cite{Ja1}. In this
paper we  study the image of $\cl_{r,m}$ in more
detail  (as well as at the ``generic point'' {of $ U$}) in terms of 
kernels of Abel-Jacobi mappings.
When $r=m$, we deduce from the Bloch-Kato conjecture
(now a theorem) various results, in particular that the cokernel of $\cl_{m,m}$ at the generic point 
is the same for integral or rational coefficients.
\end{abstract}

\maketitle

\tableofcontents{}

\section{\ Introduction}\label{SEC01}

Let $U/\CC$ be a smooth and quasi-projective variety of dimension $d$, 
$\CH^{r}(U,m) $ Bloch's higher Chow group, and 
$$
\cl_{r,m} : \CH^{r}(U,m; \QQ) \to 
\Gamma\big(H^{2r-m}(U,\QQ(r))\big), 
$$
the Betti cycle class map, where
\[
\Gamma\big(H^{2r-m}(U,\QQ(r))\big) := \hom_{\MHS}\big(\QQ(0),H^{2r-m}(U,\QQ(r))\big).
\]
If $m=0$, then by a standard weight argument (see \cite[pp.62-63]{Ja1}),
the Hodge conjecture implies that $\cl_{r,0}$ is surjective. Beilinson
once conjectured that $\cl_{r,m}$ is always surjective \cite{Be}. However
unless $U$ is given by base extension from a smooth quasi-projective
variety over a number field, it is known that this
conjecture is too optimistic \cite[Cor.~9.11]{Ja1}. 

\smallskip
\noindent
Consider these three statements:

\smallskip
\noindent
(S1) $\cl_{r,m} : \CH^{r}(X,m;\QQ) \to 
\Gamma\big(H^{2r-m}(X,\QQ(r))\big)$ is surjective for all smooth complex projective 
varieties $X$;

\smallskip
\noindent
(S2) $\cl_{r,m} : \CH^{r}(U,m;\QQ) \to
\Gamma\big(H^{2r-m}(U,\QQ(r))\big)$ is surjective for all smooth complex
quasi-projective varieties $U$;

\smallskip
\noindent
(S3) $\lim(\cl_{r,m}) : \CH^{r}(\Spec(\CC(X)),m;\QQ) \to
\Gamma\big(H^{2r-m}(\CC(X),\QQ(r))\big)$ is surjective for all smooth complex projective 
varieties $X$.

\smallskip

Note that (S1) for $ m=0 $ is equivalent with the 
Hodge conjecture (with rational coefficients 
as opposed to the original version with integral coefficients
that was disproven by Atiyah-Hirzebruch by showing the
existence of non-algebraic torsion classes),
and that it is trivially true for $m>0$ because
then $ \Gamma\big(H^{2r-m}(X,\QQ(r))\big) = 0$.
Also,
$\CH^{r}(\Spec(\CC(X)),m;\QQ) = 0$ for $r > m$ in (S3) because of dimension
reasons.

\smallskip
When $m=0$, all three statements (for all $r\geq 0$) are equivalent ({as one sees}
using a localization sequence argument, and Deligne's mixed Hodge theory,
as on \cite[pp.62-63]{Ja1}). 
However as we shall see in this paper, the statements are independent of each
other: (S1) is expected to be true, we conjecture
(S3) to be true, and show (S2) to be false in general.
There is some evidence (\cite{A-S}, \cite{Sa}, \cite{A-K}, \cite{MSa}) that (S2) holds
in the special case $r=m$, and the results in this paper
are consistent with this.
In \cite{SJK-L} we provided some 
evidence that~(S3) is always true, and in particular, 
(S3) can be viewed as an appropriate
generalization of the Hodge conjecture. 

\smallskip

In this paper we address a number of issues, namely:

\smallskip
\noindent
$\bullet$  Necessary and sufficient conditions for
$\cl_{r,m}$, and $\lim(\cl_{r,m})$ to be surjective, in terms
of kernels of (reduced) higher Abel-Jacobi maps.
This is worked out in Theorem \ref{T01} and subsequent examples,
as well as in Corollary \ref{C116} below.
Naturally this leads to a generalized notion of decomposable classes,
which is discussed in Section~\ref{SEC07}.
Also, in Theorem \ref{T45} we exhibit counterexamples to
the surjectivity of $\cl_{r,m}$ in (S2) in all cases where it
is not trivially true or
where one might reasonably expect this surjectivity
(namely $ r=m $ or $ m=0 $).

\smallskip
\noindent
$\bullet$ The story can be worked out  with integral coefficients in the case $r=m$, and
in particular, we are interested in the nature of the map
\begin{equation*}
d\log_m :\CH^m(\Spec(\CC(X)),m) \to H^m(\CC(X),\ZZ(m)) \cap F^mH^m(\CC(X),\CC).
\end{equation*}
We prove in Section~\ref{SEC010}
that the torsion subgroup $H^m(\CC(X),\ZZ(m))_{\rm tor}$
of $H^m(\CC(X),\ZZ(m))$ is trivial\footnote{Already known to some experts.}
(hence this intersection makes sense!).
The combination of Theorem \ref{T534} and Conjecture
\ref{CO534} below would imply that $d\log_m$ is surjective.
We also relate $d\log_m$ to the  map
\[
\frac{\CH^m(\Spec(\CC(X)),m)}{l} \to H^m_{\et}(\Spec(\CC(X)),\mu_l^{\otimes_{\ZZ}m}),
\]
for $l$ a non-zero integer,
which is now known to be an isomorphism. (This is the former
Bloch-Kato conjecture\footnote{This is now a theorem (\cite{W}).}, for the field $\CC(X)$.)
Thus the conjectured surjectivity of $d\log_m$ can be thought of as a Hodge
theoretic version of the Bloch-Kato conjecture.
Note that $ \lim(\cl_{m,m}) $ equals
\begin{equation*}
  d\log_m\otimes\ \QQ :
 \CH^m(\Spec(\CC(X)),m;\QQ) \to \Gamma\big(H^m(\CC(X),\QQ(m))\big)
\,.
\end{equation*}
As mentioned above, the classical Hodge conjecture, as originally formulated by
Hodge with integral coefficients, is false. But we wish to remind the
reader that it is false with integral coefficients even {\em modulo torsion}
(see \cite[p.67]{Lew}),
albeit expected (by optimists) to be true with  rational coefficients.
The following statement, again proven in Section~\ref{SEC010},
therefore seems rather remarkable.

\begin{theorem}\label{T534} 
$\coker(d\log_m) \simeq \coker( \lim(\cl_{m,m}))$. In particular,
$d\log_m$ is surjective $\Leftrightarrow \lim(\cl_{m,m})$ is surjective.
\end{theorem}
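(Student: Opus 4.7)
The plan is to reduce the statement to showing that $C := \coker(d\log_m)$ is a $\QQ$-vector space. Indeed, once this is known, the natural map $C \to C \otimes \QQ$ is an isomorphism, and right-exactness of $-\otimes\QQ$ combined with the identification $B \otimes \QQ \cong \Gamma(H^m(\CC(X),\QQ(m)))$ (where $B := H^m(\CC(X),\ZZ(m)) \cap F^m H^m(\CC(X),\CC)$; the identification uses the torsion-freeness of $H^m(\CC(X),\ZZ(m))$ cited earlier together with the fact that $F^m$ is a $\CC$-subspace) presents $C$ as $\coker(\lim(\cl_{m,m}))$.

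Write $A = \CH^m(\Spec\CC(X),m)$ and $A' = \IM(d\log_m) \subset B$, so $0 \to A' \to B \to C \to 0$ is a short exact sequence of abelian groups with $A'$ and $B$ torsion-free. The technical heart is to show, for every prime power $l^k$, that the inclusion $A' \hookrightarrow B$ induces an isomorphism $A'/l^k \xrightarrow{\cong} B/l^k$. Given this, the Tor long exact sequence against $\ZZ/l^k$ becomes
\[
0 \to C[l^k] \to A'/l^k \xrightarrow{\cong} B/l^k \to C/l^k \to 0,
\]
yielding $C[l^k] = 0$ and $C/l^k = 0$ for every $l$ and $k$; thus $C$ is torsion-free and divisible, i.e.\ a $\QQ$-vector space.

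For the key isomorphism, I invoke Bloch-Kato for the field $\CC(X)$ and Artin's comparison to identify $A/l^k$ with $H^m(\CC(X), \ZZ/l^k(m))$, where the isomorphism is realized by the mod $l^k$ reduction of $d\log_m$. Factor it through
\[
A/l^k \twoheadrightarrow A'/l^k \to B/l^k \hookrightarrow H^m(\CC(X),\ZZ(m))/l^k \hookrightarrow H^m(\CC(X),\ZZ/l^k(m)).
\]
The last arrow is injective by the Bockstein sequence of $0 \to \ZZ(m) \xrightarrow{l^k} \ZZ(m) \to \ZZ/l^k(m) \to 0$, and the second-to-last is injective because $B$ is saturated in $H^m(\CC(X),\ZZ(m))$: if $l^k\beta \in B$ for some $\beta \in H^m(\CC(X),\ZZ(m))$, then $\beta$ is uniquely determined by torsion-freeness and lies in $F^m$ because $F^m$ is a $\CC$-subspace, hence $\beta \in B$. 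Since the total composition is bijective while the rightmost two arrows are injective, the two surjections on the left must also be bijections; in particular $A'/l^k \xrightarrow{\cong} B/l^k$.

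The main obstacle is the careful setup of the Bloch-Kato input at the generic point: one must pass from $H^m_{\et}(\Spec\CC(X), \mu_{l^k}^{\otimes m})$ to the Betti cohomology interpreted as the colimit of $H^m(U,\ZZ/l^k(m))$ over Zariski opens $U$ of $X$, and verify that the Bloch-Kato isomorphism is indeed realized by $d\log_m \bmod l^k$. Once this input is in hand, the remainder of the argument is a purely formal Tor-sequence diagram chase.
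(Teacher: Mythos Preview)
Your proof is correct and follows essentially the same route as the paper: both reduce to showing $\coker(d\log_m)$ is uniquely divisible by combining the Bloch--Kato isomorphism (transported to Betti cohomology via the \'etale--analytic comparison) with the saturation of $\Gamma$ inside $H^m(\CC(X),\ZZ(m))$ and the torsion-freeness of the latter, then concluding via a snake/Tor argument. The paper's Theorem~\ref{T0001}(iii) and its first corollary are exactly your argument, with the snake lemma playing the role of your Tor sequence and arbitrary nonzero $l$ in place of your prime powers~$l^k$.
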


What this theorem tells us is that the Hodge theoretic analog of the Bloch-Kato
conjecture is the  surjectivity of $\lim(\cl_{m,m})$. Quite generally
we expect that the following is true.

\begin{conj}\label{CO534}
For all $ r,m\ge0 $, statement  (S3) holds.
\end{conj}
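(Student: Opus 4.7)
The plan is to show that the cokernel of $d\log_m$ is already a $\QQ$-vector space; the isomorphism with $\coker(\lim(\cl_{m,m}))$ then follows formally. To make this reduction precise, first identify the targets: since $\QQ(m)$ has weight $-2m$, one has $\Gamma(H^m(\CC(X),\QQ(m))) = H^m(\CC(X),\QQ)\cap F^m H^m(\CC(X),\CC)$, and the torsion-freeness of $H^m(\CC(X),\ZZ(m))$ (proved in Section~\ref{SEC010}) together with the fact that $F^m$ is a $\CC$-subspace yields
\begin{equation*}
\bigl(H^m(\CC(X),\ZZ(m))\cap F^m\bigr)\otimes\QQ \;=\; H^m(\CC(X),\QQ)\cap F^m.
\end{equation*}
Hence $\coker(\lim(\cl_{m,m})) = \coker(d\log_m)\otimes\QQ$, and it suffices to show that $C := \coker(d\log_m)$ is uniquely divisible.

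The essential input is the Bloch-Kato theorem. Set $B := H^m(\CC(X),\ZZ(m))\cap F^m$, the target of $d\log_m$. From $F^m\cap lH^m(\CC(X),\CC) = lF^m$ and the Bockstein attached to $0\to\ZZ(m)\xrightarrow{l}\ZZ(m)\to\ZZ/l(m)\to 0$ (using torsion-freeness again), I would obtain an embedding $B/lB\hookrightarrow H^m(\CC(X),\ZZ/l(m))$; under \'etale-Betti comparison the right-hand side is $H^m_{\et}(\Spec(\CC(X)),\mu_l^{\otimes m})$. By construction of $d\log_m$, the composite
\begin{equation*}
\CH^m(\Spec(\CC(X)),m)/l \longrightarrow B/lB \hookrightarrow H^m_{\et}(\Spec(\CC(X)),\mu_l^{\otimes m})
\end{equation*}
is the Bloch-Kato symbol map, now known to be an isomorphism; hence both arrows are isomorphisms. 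The surjection $\CH^m/l\twoheadrightarrow B/lB$ immediately gives $C/lC = 0$ for every $l$, so $C$ is divisible. For torsion-freeness, factor $\CH^m/l\twoheadrightarrow \IM(d\log_m)/l\twoheadrightarrow B/lB$: since the composite is an isomorphism, so is the second arrow, and the long exact Tor sequence attached to $0\to\IM(d\log_m)\to B\to C\to 0$ (collapsed by the torsion-freeness of $B$) then reads $0\to C[l]\to\IM(d\log_m)/l\xrightarrow{\sim}B/lB\to 0$, forcing $C[l]=0$.

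The hard part is the single compatibility that drives the whole argument: the reduction of $d\log_m$ modulo $l$ must coincide, under \'etale-Betti comparison, with the Bloch-Kato symbol map $\CH^m(\Spec(\CC(X)),m)/l\to H^m_{\et}(\Spec(\CC(X)),\mu_l^{\otimes m})$. This is where the (now-proven) Bloch-Kato conjecture does the real work; the remainder is elementary diagram chasing, propagating torsion-freeness at the generic point into unique divisibility of $C$ and yielding the desired $\coker(d\log_m) = \coker(d\log_m)\otimes\QQ = \coker(\lim(\cl_{m,m}))$.
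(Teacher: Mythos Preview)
There is a fundamental mismatch: the statement you were asked to prove is Conjecture~\ref{CO534}, namely that (S3) holds for all $r,m\ge 0$. This is an \emph{open conjecture} in the paper, not a theorem, and it is not proved there. In fact for $m=0$ it is equivalent to the Hodge conjecture, so no proof is expected.

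What you have actually written is a proof of Theorem~\ref{T534}, the statement that $\coker(d\log_m)\simeq\coker(\lim(\cl_{m,m}))$. Your argument for that theorem is correct and is essentially identical to the paper's own proof (Theorem~\ref{T0001} and its Corollary in Section~\ref{SEC010}): one uses the Bloch--Kato isomorphism to see that $\CH^m(\Spec(\CC(X)),m)/l \to B/lB$ and $B/lB \hookrightarrow H^m(\CC(X),\ZZ(m))/l$ are both isomorphisms, deduces that $\coker(d\log_m)$ is uniquely divisible, and concludes that tensoring with~$\QQ$ changes nothing. The compatibility you flag as ``the hard part'' is exactly what the paper spells out in the diagrams preceding Theorem~\ref{BK}.

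But Theorem~\ref{T534} only says the integral and rational cokernels agree; it does \emph{not} say either of them vanishes. Conjecture~\ref{CO534} asserts the vanishing (surjectivity of $\lim(\cl_{r,m})$) for all $r,m$, which your argument nowhere addresses --- and indeed cannot, since for $r\ne m$ the map $d\log_m$ is not even the relevant object, and for $m=0$ you would be proving the Hodge conjecture. So the proposal, while a valid proof of a different result, does not touch the stated conjecture.
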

\noindent
By our earlier remarks this conjecture includes the  Hodge conjecture
and relates it to the (now proved) Bloch-Kato conjecture.

The authors wish to thank 
Spencer Bloch, H\'el\`ene Esnault, Marc Levine and Chuck Weibel
for useful conversations and/or correspondence.

\section{\ Notation}\label{SEC02}

(i) Unless otherwise specified, 
$X$ is a smooth complex projective variety of dimension $d$, and
$U$ is a smooth complex quasi-projective variety.

\smallskip
Let $\A\subseteq \RR$ be a subring.
\smallskip

(ii) $\A(r) := (2\pi\I)^r\A$ (Tate twist).

\smallskip

(iii) {If $ H $ is an $\A$-mixed Hodge structure (MHS), then we write
$\Gamma(H) := \hom_{\A-\MHS}(\A(0),H)$ and $J(H) := \Ext^{1}_{\A-\MHS}(\A(0),H)$}.

\smallskip

(iv) For a quasi-projective variety $V$,
$\CH^{r}(V,m)$ is the higher Chow group defined in \cite{B}, 
and $\CH^{r}(V) := \CH^{r}(V,0)$.

\smallskip

(v) $\CH^{r}(V,m;\QQ) := \CH^{r}(V,m)\otimes\QQ$.

\smallskip

(vi) We write

\[H^{2r-m}(\CC(X),\QQ(r)) = 
\lim_{{\buildrel \to\over {U\subset X}}}H^{2r-m}(U,\QQ(r)),
\]
the limit taken over all  Zariski open subsets of $X$.

(vii) We let $ H_{\Dd}^*(U,\A(r)) $ denote the (algebraic) Deligne-Beilinson
cohomology \cite{EV} of $ U $.

\section{\ Weight filtered spectral sequence}\label{SEC03}

We provide a breezy review of some of the ideas in~\cite[Section~3.1]{K-L}. 
We first recall the definition
of the higher Chow groups. Let $V/{\CC}$ be a 
quasi-projective variety. Put
$z^r(V) =$ free abelian group generated by subvarieties of
codimension $r$ in $V$, $\Delta^m$ the standard $m$-simplex, and
$z^r(V,m) = \big\{\xi\in z^k(V\times \Delta^m)\ \big|$ $\xi$ meets
all faces properly$\big\}$.  We let $ \partial = \sum_i (-1)^i \partial_i $
where $ \partial_i $ is the restriction to the $ i $-th codimension $1$ face.

\begin{defn}\label{DEFNA1} (\cite{B})
${\CH}^{\bullet}(V,\bullet) =$ homology of $\big\{z^{\bullet}
(V,\bullet),\partial\big\}$.
 We put ${\rm CH}^k(V) := {\rm CH}^k(V,0)$. 
 \end{defn}
 
We also need to recall the cubical version.
Let $\square^m:= ( \PP^1 \backslash
\{1\})^m $ with coordinates $z_i$ and $2^{m}$ codimension one faces
obtained by setting $z_i=0,\infty$, and boundary maps
$\del = \sum (-1)^{i-1}(\partial_{i}^{0}-\partial_{i}^{\infty})$,
where $\partial_{i}^{0},\ \partial_{i}^{\infty}$ denote
the restriction maps to the faces $z_{i}=0,\ z_{i}=\infty$ 
respectively. The rest of the definition is completely
analogous for $z^r(V,m) \subset z^r(V\times \square^m)$,
except that one has to quotient out by a
subgroup of degenerate cycles.
It is known that both complexes are quasi-isomorphic  (\cite{B}).

Now write
$U = X\bs Y$, where $X/\CC$ is a smooth projective variety of dimension $d$,
$Y = Y_{1}\cup\cdots\cup Y_{n}\subset X$ a NCD with smooth components.
For an integer $t\geq 0$,
put $Y^{[t]} =$ disjoint union of $t$-fold intersections
of the various components of $Y$, with corresponding
simplicial scheme $Y^{[\bullet]} \to Y \hookrightarrow
Y^{[0]} :=X$. 
There is a third quadrant double complex
\begin{equation} \label{ss}
\Z_{0}^{i,j}(r) := z^{r+i}(Y^{[-i]},-j), \
i,\ j \leq 0;
\begin{matrix} \Z_{0}^{i,j+1}\\
\quad\biggl\uparrow\del\\
\Z_{0}^{i,j}&{\buildrel {\Gy}\over\longrightarrow}&\Z_{0}^{i+1,j}
\end{matrix}
\end{equation}
whose differentials are $ \partial$ vertically
($ \partial $ as coming from the definition of Bloch's higher Chow groups),
and $ \Gy$ ($=$  Gysin) horizontally.
To the corresponding total complex $\Tot^{\bullet}\Z(r)$
with $D = \partial \pm \Gy$ are 
associated the {two} Grothendieck spectral sequences $ E_i^{p,q}$ and $ {}^{\p} E_i^{p,q} $
with
\[
E_{2}^{p,q} = H_{\Gy}^{p}\big(H_{\partial}^{q}(\Z_{0}^{\bullet,
\bullet}(r))\big);\quad
{}^{\p}E_{2}^{p,q} = H_{\partial}^{p}(H_{\Gy}^{q}\big(\Z_{0}^{\bullet,
\bullet}(r))\big).
\]
The second spectral sequence, together with
Bloch's quasi-isomorphism
$$
\frac{z^{\bullet}(X,\ast)}{z^{\bullet}_{Y}(X,\ast)}
{\buildrel {\text{\rm Restriction}}\over\longrightarrow} z^{\bullet}(U,\ast),
$$
shows that (\cite{K-L}(sect. 3.1))
$$
H^{-m}(\Tot^{\bullet}\Z(r)) = {}^{\p}E_{2}^{0,-m} = \CH^{r}(U,m).
$$
The first spectral sequence has $E_{1}^{i,j} = \CH^{r+i}(Y^{[-i]},-j)  $
and
\[
E_{2}^{i,j} = \frac{\ker\big( \Gy:\CH^{r+i}(Y^{[-i]},-j) \to
\CH^{r+i+1}(Y^{[-i-1]},-j)\big)}{\Gy\big(\CH^{r+i-1}(Y^{[-i+1]},-j)\big)}
\,.
\]
The corresponding filtration on $\Tot^{\bullet}\Z(r)$ also induces
a ``weight'' filtration
$$
W_{-m}\CH^{r}(U,m) \subseteq\cdots \subseteq W_{0}\CH^{r}(U,m) = \CH^{r}(U,m),
$$
which is characterized by the injection
\[
\partial_R^{\ell,m}: Gr_W^{\ell}\CH^r(U,m)  \ {=}\
E_{\infty}^{-\ell-m,\ell} \hookrightarrow\ 
\left\{\begin{matrix} \text{\rm A\ subquotient\ of}\\
\CH^{r-\ell-m}(Y^{[\ell+m]},-\ell)\end{matrix}\right\},
\]
for $ \ell = -m,...,0$, where $\partial_R^{\ell,m}$
is called  a residue map in \cite{K-L}.  It is easy to check that
\[
 E_{\infty}^{0,-m}
=
 W_{-m}\CH^r(U,m)\big) 
=
 {\rm Image}\big(\CH^r(X,m) \to \CH^r(U,m)\big) 
\,.
\]

\section{\ The image of the cycle class map}\label{SEC04}

Our main goal in this section is to prove Theorem \ref{T01}, which provides necessary
and sufficient conditions for the surjectivity of
$\cl_{r,m} : \CH^r(U,m;\QQ) \to \Gamma\big(H^{2r-m}(U,\QQ(r))\big)$.
The obstruction to surjectivity will be explained in terms of kernels
of Abel-Jacobi maps for the higher Chow groups.
We fix $ U $, $ r\ge0 $ and $ m\ge0 $.
Of particular interest is the top residue
$\partial_R(\xi) := \partial_R^{0,m}(\xi) \in E_{\infty}^{-m,0}$ for $\xi\in \CH^r(U,m)$, where
\begin{equation}\label{E666}
E_{\infty}^{-m,0} \subseteq E_2^{-m,0} = \frac{\ker\big(\Gy : \CH^{r-m}(Y^{[m]}) \to 
\CH^{r-m+1}(Y^{[m-1]})\big)}
{\Gy\big(\CH^{r-m-1}(Y^{[m+1]})\big)}.
\end{equation}
(In general, $E_2^{-m,0} \ne E^{-m,0}_{\infty}$.)
We use this to study the
cycle class map $\cl_{r,m}$  via the commutative diagram
\begin{equation}\label{E0001}
\begin{matrix}
\CH^r(U,m;\QQ)&\twoheadrightarrow&{E}_{\infty}^{-m,0}\otimes\QQ\\
&\\
\cl_{r,m}\biggl\downarrow\quad&&\biggr\downarrow\\
&\\
\Gamma\big(H^{2r-m}(U,\QQ(r))\big)&\hookrightarrow&\Gamma\big(Gr^{W}_{0}H^{2r-m}(U,\QQ(r))\big),
\,
\end{matrix}
\end{equation}
where the injectivity of the map on the bottom row follows from
the fact that $\Gamma\big(W_{-1}H^{2r-m}(U,\QQ(r))\big) = 0$.
(In general we use $ \QQ $-coefficients because
weight filtration in Hodge theory is defined for such coefficients. Exceptions to this situation
are discussed in sections \ref{SEC012} and \ref{SEC010}.)
With regard to the morphism
$$
\Gy : \CH^{r-m}(Y^{[m]}) \to \CH^{r-m+1}(Y^{[m-1]}),
$$
for $m\ge 1$, put
$$
\CH^{r-m}(Y^{[m]})^{\circ} := 
\Gy^{-1}\big(\CH_{\hom}^{r-m+1}(Y^{[m-1]})\big),
$$
where $\CH_{\hom}^{r-m+1}(Y^{[m-1]}) \subset \CH^{r-m+1}(Y^{[m-1]})$ is the 
subgroup of null-homologous cycles on $Y^{[m-1]}$.
Notice that for $m\geq 1$,
$$
E_{\infty}^{-m,0} = E_{m+1}^{-m,0} \subseteq E_2^{-m,0} \hookrightarrow
\frac{\CH^{r-m}(Y^{[m]})^{\circ}}{\Gy\big(
\CH^{r-m-1}(Y^{[m+1]})\big)},
$$
and for all $m\geq 0$, we have
$$
d_m : E_m^{-m,0} \to E_m^{0,-m+1},\quad E_{\infty}^{0,-m+1} = 
\frac{E_m^{0,-m+1}}{d_m(
E_m^{-m,0})}. 
$$
For $k=1,...,{m+1}$ put
\[
\widetilde{E}_k^{-m,0} =
\begin{cases} 
\frac{\CH^{r-m}(Y^{[m]};\QQ)^{\circ}}{\Gy\big(\CH^{r-m-1}(Y^{[m+1]};\QQ)\big)}&
\text{if $k=1$}\\
E_k^{-m,0}\otimes\QQ&\text{for $k=2,...,m+1$}
\end{cases}
\]
{so that $ \widetilde{E}_k^{-m,0} \subseteq \widetilde{E}_1^{-m,0}$
for $k \geq 1$. (Our main reason for introducing $\widetilde{E}_1^{-m,0}$ is so
that  the map $\b$ in diagram (\ref{E004}) below has a chance of
being surjective, as implied by the Hodge conjecture.)
For $ k=1,\dots,m $ we let}
$\widetilde{E}_k^{-m+k,-k+1} $ be
\[
\begin{cases}
\ker \big(\CH^{r-m+1}_{\hom}(Y^{[m-1]};\QQ) {\buildrel {\Gy}\over\longrightarrow}
\CH^{r-m+2}(Y^{[m-2]};\QQ)\big)&\text{if $k=1$,}\\
 E_k^{-m+k,-k+1}\otimes\QQ
&\text{for $k=2,...,m$,}
\end{cases}
\]
hence $ \widetilde{E}_k^{-m+k,-k+1} \subseteq E_k^{-m+k,-k+1} \otimes \QQ$
and $ d_k(\widetilde{E}_k^{-m,0}) \subseteq \widetilde{E}_k^{-m+k,-k+1} $.
Then~\eqref{E0001} becomes
\begin{equation}\label{E0002}
\begin{matrix}
\CH^r(U,m;\QQ)&\twoheadrightarrow&\widetilde{E}_{\infty}^{-m,0}\\
&\\
\cl_{r,m}\biggl\downarrow\quad&&\biggr\downarrow\\
&\\
\Gamma(H^{2r-m}(U,\QQ(r)))&\hookrightarrow&\Gamma(Gr^{W}_{0})
\end{matrix}
\end{equation}
where we abbreviate $ W_j H^{2r-m}(U,\QQ(r))$ to $ W_j $
and similarly for $ Gr_j^W $, and where
$
\widetilde{E}_{\infty}^{-m,0}\ {\buildrel {\rm def}\over =}\
\widetilde{E}_{m+1}^{-m,0}  =
 \bigcap_{k\geq 1}\ker(d_k:\widetilde{E}_k^{-m,0}\to\widetilde{E}_k^{-m+k,-k+1})
\subseteq \widetilde{E}_{1}^{-m,0}
$,
which equals $ E_\infty^{-m,0}\otimes\QQ $
because ${E}_{\infty}^{-m,0} = {E}_{m+1}^{-m,0}$.
Note that as in \cite[3.1]{K-L},
$$
\Gamma(Gr^W_0) =
\frac{\ker\left(\begin{matrix} \Gy : H^{r-m,r-m}(Y^{[m]},\QQ(r-m))\to\\
H^{r-m+1,r-m+1}(Y^{[m-1]},\QQ(r-m+1))\end{matrix}\right)}{\Gy
\big(H^{r-m-1,r-m-1}(Y^{[m+1]},
\QQ(r-m-1))\big)},
$$
and, for $0\leq k\leq m$, 
\begin{equation}\label{111}
Gr^W_{-k} = \frac{\ker\left(\begin{matrix} \Gy: H^{2r-2m+k}(Y^{[m-k]},\QQ(r-m+k)) 
\to\\
H^{2r-2m +k+2}(Y^{[m-k-1]},\QQ(r-m+k+1))\end{matrix}\right)}{\Gy
\big(H^{2r-2m+k-2}(Y^{[m-k+1]},
\QQ(r-m+k-1))\big)}.
\end{equation}

Using the differentials $d_1 := \Gy,d_2,\ldots,d_m$ we claim that there
is a commutative
diagram of exact sequences for  each $k = 1,\ldots,m$
\begin{equation}\label{E003}
\begin{matrix}
0&\to&\widetilde{E}^{-m,0}_{k+1}&\to&\widetilde{E}^{-m,0}_k&
{\buildrel {d_k}\over\longrightarrow}&\widetilde{E}^{-m+k,-k+1}_k\\
&\\
&&\a_{k+1}\downarrow\ \quad&&\a_k\downarrow \quad&&\lambda_k\downarrow\quad\\
&\\
0&\to&J(W_{-k-1})&\to&J(W_{-k})&
{\buildrel h_k\over {\twoheadrightarrow}}&
J(Gr^W_{-k})
\end{matrix}
\end{equation}
where $h_k$ is the obvious map,  $\lambda_k$ is the Abel-Jacobi map
(defined explicitly in Section~\ref{SEC05} below),  
and where the $\a_k$'s are characterized
as follows. If we assume $\a_k$ is defined,
then the definition of $\a_{k+1}$ is dictated by imposing commutativity in (\ref{E003}). Thus
we need only define $\a_1$, and show that  $h_k\circ \a_k = \lambda_k\circ d_k$.
The latter will be proven in Section~\ref{SEC05}.
Note that, as implicit in~\eqref{E0002},
$\cl_{r,m}$ is the composition
$$
\CH^r(U,m;\QQ) \to \widetilde{E}_{\infty}^{-m,0} = \widetilde{E}_{m+1}^{-m,0}
\to \Gamma\big(H^{2r-m}(U,\QQ(r))\big),
$$
and that there is a map
$ {\b=\b_{r,m}} : \widetilde{E}_1^{-m,0} \to  \Gamma(Gr^W_0)$,
which is an isomorphism when $r=m$, and is
surjective for all $r$ and $m$ under the assumption of the 
Hodge conjecture. We let $\a_1 =  \kappa \circ \b $ in
the diagram
(with $\kappa$ the obvious map)
\begin{equation}\label{E004}
\xymatrix{
\widetilde{E}_{\infty}^{-m,0} \,\, \ar[d]\ar@{^{(}->}[r]&\widetilde{E}_1^{-m,0}\ar[d]^-\b\ar[rd]^-{\a_1}\ar[rr]^-{d_1}&&\widetilde{E}_1^{-m+1,0}
\ar[d]^-{\lambda_1}
\\
\Gamma\big(H^{2r-m}(U,\QQ(r))\big) \,\, \ar@{^{(}->}[r]&\Gamma(Gr^W_0)
\ar[r]^-\kappa&J(W_{-1})\ar[r]^-{h_1} & J(Gr^W_{-1})
\,.
}
\end{equation}
Then~\eqref{E004} and~\eqref{E003} commute (see Theorem~\ref{diagrams-commute}).

With regard to the diagram~\eqref{E003} above, it
is obvious that 
$$
\ker (\a_{k+1}) = \ker\big(d_k\big|_{\ker(\a_k)}\big)\subseteq \ker(\a_k).
$$
From the isomorphisms
$\b(\ker(\a_k)) \simeq \ker(\a_k) / \ker(\b) \cap \ker(\a_k)
$ and $ \b(\ker(\a_{k+1})) \simeq \ker\big(d_k\big|_{\ker(\a_k)}\big)
/ \ker(\b) \cap \ker\big(d_k\big|_{\ker(\a_k)}\big)$,
we arrive at the identification
\begin{equation*}
 \frac{\b(\ker(\a_k))}{\b(\ker(\a_{k+1}))} \simeq 
\frac{d_k(\ker(\a_k))}{d_k\big(\ker(\b)\cap \ker (\a_k)\big)}
\,. 
\end{equation*}
We have inclusions
\[
 \IM(\cl_{r,m}) =
 \b(\ker(\a_{m+1})) \subseteq 
 \cdots \subseteq
 \b(\ker(\a_1)) \subseteq 
 \Gamma\big(H^{2r-m}(U,\QQ(r))\big)
\]
where on the left we have equality as $ W_{-m-1} =0 $
and $\widetilde{E}_{m+1}^{-m,0} = \widetilde{E}_{\infty}^{-m,0}$,
and the right-most inclusion is an equality if 
$\Gamma\big(H^{2r-m}(U,\QQ(r))\big)\subseteq\IM(\b)$ (e.g., if $\b$ is surjective).
We mention in passing the following result:

\begin{prop}\label{PP1} Suppose that $\lambda_k\big|_{\IM(d_k)}$ is injective
for $k=1,...,m$.  If $\b$ is surjective, then $\cl_{r,m} : \CH^r(U,m;\QQ)
\to \Gamma \big(H^{2r-m}(U,\QQ(r))\big)$ is surjective.
\end{prop}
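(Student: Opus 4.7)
The plan is to show that every inclusion in the chain
\[
\IM(\cl_{r,m}) = \b(\ker(\a_{m+1})) \subseteq \cdots \subseteq \b(\ker(\a_1)) \subseteq \Gamma\big(H^{2r-m}(U,\QQ(r))\big)
\]
recorded just above the proposition is in fact an equality. The right-most inclusion is already observed in the text to be an equality when $\b$ is surjective, so the task reduces to proving $\b(\ker(\a_{k+1})) = \b(\ker(\a_k))$ for each $k=1,\dots,m$.

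For these middle inclusions I would invoke the identification
\[
\frac{\b(\ker(\a_k))}{\b(\ker(\a_{k+1}))} \simeq \frac{d_k(\ker(\a_k))}{d_k\big(\ker(\b) \cap \ker(\a_k)\big)}
\]
displayed in the text: it suffices to show that $d_k(\ker(\a_k)) = 0$ for each such $k$. Given $x \in \ker(\a_k)$, the commutativity $h_k \circ \a_k = \lambda_k \circ d_k$ of the right-hand square of diagram~(\ref{E003}) forces $\lambda_k(d_k(x)) = h_k(\a_k(x)) = 0$. Since $d_k(x)$ lies in $\IM(d_k)$, the hypothesis that $\lambda_k|_{\IM(d_k)}$ is injective then gives $d_k(x) = 0$, as required.

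Concatenating all these equalities yields $\IM(\cl_{r,m}) = \Gamma(H^{2r-m}(U,\QQ(r)))$, and surjectivity follows. The only genuinely non-formal input is the commutativity $h_k \circ \a_k = \lambda_k \circ d_k$ in~(\ref{E003}), which is the content of Theorem~\ref{diagrams-commute} and is verified separately in Section~\ref{SEC05}; that will be the main technical point on which the argument rests, while the deduction itself is a short diagram chase relying only on the identification above and the formal properties of $\a_1 = \kappa \circ \b$.
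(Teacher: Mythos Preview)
Your proposal is correct and follows essentially the same approach as the paper: the paper's one-line proof observes that from diagram~(\ref{E003}) the injectivity hypothesis forces $\ker(\a_{m+1}) = \cdots = \ker(\a_1)$ (equivalently, $d_k(\ker(\a_k)) = 0$ for each $k$, exactly as you argue via $\lambda_k \circ d_k = h_k \circ \a_k$), and then applies the inclusions together with the surjectivity of $\b$. Your version simply routes the same observation through the displayed quotient identification rather than directly through the equality of kernels, which is a cosmetic difference.
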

\begin{proof} 
From ~\eqref{E003} we get $ \ker(\a_{m+1}) =\cdots=\ker(\a_1)$, and we apply the inclusions above.
\end{proof}

A slight tweaking of  Proposition~\ref{PP1} together with diagrams~\eqref{E003}
and~\eqref{E004} leads to:

\begin{prop} \label{P001}[M. Saito \cite{MSa}. Also see \cite{K-L}.]
Assume the Hodge conjecture, and that the Bloch-Beilinson conjecture holds, viz.,
for all smooth projective $V/\ol{\QQ}$,
the Abel-Jacobi map 
 $AJ : \CH_{\hom}^{r}(V/\ol{\QQ},j;\QQ) \to
J\big(H^{2r-j-1}(V,\QQ(r))\big)$, is injective for all $r$ and $j$.
If $U$ is obtained  from a smooth quasi-projective variety over $\ol{\QQ}$ by
base change to $\CC$, then for all $r$ and $m$,
$\cl_{r,m} : \CH^{r}(U,m;\QQ) \to
\Gamma\big(H^{2r-m}(U,\QQ(r))\big)$ is surjective.
\end{prop}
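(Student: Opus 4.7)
My plan is to deduce this from Proposition~\ref{PP1} by weakening its injectivity hypothesis to what is available from the Bloch-Beilinson conjecture for varieties over $\ol{\QQ}$. First, by the Hodge conjecture applied to the smooth projective strata $Y^{[j]}$, the map $\b:\widetilde{E}_1^{-m,0}\to\Gamma(Gr^W_0)$ is surjective, so any Hodge class $\eta\in\Gamma(H^{2r-m}(U,\QQ(r)))$ admits a lift $\xi_1\in\widetilde{E}_1^{-m,0}$. Since $U$ descends to some $U_0/\ol{\QQ}$, I would fix a smooth compactification $X_0\supseteq U_0$ with simple normal crossings complement $Y_0$, all defined over $\ol{\QQ}$; then each $Y_0^{[j]}$ is smooth projective over $\ol{\QQ}$, and the entire double complex~\eqref{ss} (hence both spectral sequences and all differentials $d_k$) is defined over $\ol{\QQ}$.

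The next step is to adjust $\xi_1$ within $\ker(\b)$ so that $d_1\xi_1=0$, and then inductively modify to obtain $\xi_k\in\widetilde{E}_k^{-m,0}$ with $d_k\xi_k=0$ for $k=1,\ldots,m$, producing a class in $\widetilde{E}_{m+1}^{-m,0}=\widetilde{E}_\infty^{-m,0}$ which by~\eqref{E0002} is a preimage of $\eta$. Commutativity of~\eqref{E004} and~\eqref{E003}, together with $\eta\in\ker(\kappa)$, gives $\xi_k\in\ker(\a_k)$ at each stage, whence $\lambda_k(d_k\xi_k)=h_k\a_k(\xi_k)=0$. So the obstruction $d_k\xi_k$ is automatically a null-homologous cycle in the kernel of the Abel-Jacobi map on (a subquotient of) $\CH^{r-m+k}_{\hom}(Y^{[m-k]},k-1;\QQ)$, and I need only know that $\lambda_k$ is injective on the portion of $\IM(d_k)$ that actually arises in the induction.

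This is where the Bloch-Beilinson conjecture enters, but only over $\ol{\QQ}$. My plan is to use a standard spread: realize the cycles $d_k\xi_k$ on $(Y_0^{[m-k]})_\CC$ over a smooth $\ol{\QQ}$-base $S$, specialize at $\ol{\QQ}$-points where the Abel-Jacobi invariant descends consistently, apply Bloch-Beilinson to kill the specialized cycle, and pull the vanishing back through the spread so as to modify $\xi_k$ within $\ker(\b)\cap\ker(d_1,\ldots,d_{k-1})$. The hard part, and the main obstacle, will be this spread/specialization step: one needs (i) that the inductively modified $\xi_{k-1}$ can be chosen to admit a compatible $\ol{\QQ}$-spread, so that $d_k\xi_k$ specializes meaningfully to $\ol{\QQ}$-points, and (ii) that triviality of the $\ol{\QQ}$-specializations at a Zariski-dense set of $\ol{\QQ}$-points of $S$ lifts back to triviality of $d_k\xi_k$ in the $\CC$-Chow group modulo the allowed indeterminacy in $\ker(\b)$. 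Each is a standard spread-and-specialize manipulation in the style of~\cite{K-L} and~\cite{MSa}, but welding them to the inductive framework above is the technical heart of the argument and the only real novelty beyond Proposition~\ref{PP1}.
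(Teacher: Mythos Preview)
The paper omits the proof of this proposition, referring instead to \cite[Prop.~3.7]{K-L} and indicating only that it is ``a slight tweaking of Proposition~\ref{PP1} together with diagrams~\eqref{E003} and~\eqref{E004}''. Your strategy --- Hodge conjecture for surjectivity of $\b$, then Bloch--Beilinson to control the obstructions $d_k\xi_k$ --- is aligned with that hint.

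That said, you misplace the main difficulty. The spread/specialization step you flag as the ``hard part'' is unnecessary. Since $U$ descends to $\ol{\QQ}$, you may choose the compactification $X_0\supset U_0$ and the NCD $Y_0$ over $\ol{\QQ}$; then the entire double complex~\eqref{ss}, both spectral sequences, and all differentials $d_k$ are already defined over $\ol{\QQ}$. Betti cohomology (hence the Hodge classes, hence $\Gamma(Gr_0^W)$ and $\Gamma(H^{2r-m}(U,\QQ(r)))$) is unchanged under base change to $\CC$, and by the standard specialization argument an algebraic cycle over $\CC$ on a $\ol{\QQ}$-variety realizing a given Hodge class may be replaced by one over $\ol{\QQ}$. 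So take $\xi_1$ over $\ol{\QQ}$ from the start; every subsequent $d_k\xi_k$ is then automatically a $\ol{\QQ}$-class, and no spreading over an auxiliary base $S$ is required.

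The genuine subtlety, which your sketch does not address, lies in what Bloch--Beilinson actually provides. The map $\lambda_k$ lands in $J(Gr_{-k}^W)$, a \emph{subquotient} of the full Jacobian $J\big(H^{2r-2m+k}(Y^{[m-k]},\QQ(r-m+k))\big)$, and its source $\widetilde{E}_k^{-m+k,-k+1}$ is likewise a subquotient of $\CH^{r-m+k}(Y^{[m-k]},k-1;\QQ)$. Bloch--Beilinson gives injectivity of the \emph{full} Abel--Jacobi map on $\CH_{\hom}$, not of the induced map on these subquotients. Concretely, $\lambda_k(d_k\xi_k)=0$ says only that the full Abel--Jacobi image of a representative of $d_k\xi_k$ lies in the image of the Gysin map from $J\big(H^{2r-2m+k-2}(Y^{[m-k+1]})\big)$; it does not by itself force $d_k\xi_k=0$ in $\widetilde{E}_k^{-m+k,-k+1}$, nor does it produce the correcting class in $\ker(\b)\cap\widetilde{E}_k^{-m,0}$ that your induction requires. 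Bridging this gap --- showing, under the Hodge conjecture plus Bloch--Beilinson on the lower strata, that $d_k(\ker\a_k)\subseteq d_k\big(\ker\b\cap\widetilde{E}_k^{-m,0}\big)$ in the language of Theorem~\ref{T01} --- is the actual ``tweaking'' of Proposition~\ref{PP1}, and it is the step your outline leaves unexplained.
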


\begin{proof} The proof, which is similar to the one
given in  \cite[Prop.~3.7]{K-L}, is omitted.
\end{proof}

Next, $\ker(\b)\subseteq\ker(\a_1)$ by ~\eqref{E004}, hence 
$\ker(\b) \cap\widetilde{E}_k^{-m,0} \subseteq\ker(\a_1)\cap\widetilde{E}_k^{-m,0}
=\ker(\a_k)\subseteq \widetilde{E}_k^{-m,0}$ for $k=1,...,m+1$, with the last equality because of ~\eqref{E003}. Hence
$\ker(\b)\cap\ker(\a_k) =\ker(\b)\cap\widetilde{E}_k^{-m,0} $.
If $h_k$ is an isomorphism then this gives
\begin{equation*}
 \frac{d_k(\ker(\a_k))}{d_k\big(\ker(\b)\cap \ker (\a_k)\big)}
=
 \frac{\ker\big(\lambda_k\big|_{d_k(\widetilde{E}_k^{-m,0})}\big)}{d_k\big(\ker(\b)\cap\widetilde{E}^{-m,0}_k\big)}
\,.
\end{equation*}
This is the case when $k=m$ again because $ W_{-m-1}=0 $.
Putting all these ideas together, we obtain the following.

\begin{theorem}\label{T01} 

{\rm (i)} if $\cl_{r,m}$ is surjective then
$$
\frac{d_k(\ker(\a_k))}{d_k\big(\ker (\b) \cap \ker(\a_k)
\big)} = 0 \text{ for all } k=1,\ldots,m;
$$
{the converse is true if $ \b $ is surjective;}
{more precisely:}
{the converse is true if $ \Gamma\big(H^{2r-m}(U,\QQ(r)) \subseteq \IM(\b) $ (e.g., if $ \b $ is surjective);}

{\rm (ii)} $\cl_{r,m}$ is surjective implies that
$$
\frac{\ker\big(\lambda_m\big|_{d_m(\widetilde{E}_m^{-m,0})}\big)}{d_m\big(\ker (\b) \cap 
\widetilde{E}^{-m,0}_m\big)} = 0
$$

{\rm (iii)} if  $ \Gamma\big(H^{2r-m}(U,\QQ(r)) \subseteq \IM(\b) $ and
$$
\frac{d_k(\ker(\a_k))}{d_k\big(\ker (\b) \cap \ker(\a_k)\big)} = 0
$$
for all $k=1,\ldots,m-1$, then there is a short exact sequence
$$
0\to\IM(\cl_{r,m}) \to \Gamma\big(H^{2r-m}(U,\QQ(r))\big) \to
\frac{\ker\big(\lambda_m\big|_{d_m(\widetilde{E}_m^{-m,0})}\big)}{d_m\big(\ker (\b) \cap 
\widetilde{E}^{-m,0}_m\big)} \to 0
\,.
$$
\end{theorem}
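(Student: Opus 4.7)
The plan is to deduce all three parts from two facts established in the discussion immediately preceding the theorem: the chain of inclusions
\begin{equation*}
\IM(\cl_{r,m}) = \b(\ker(\a_{m+1})) \subseteq \cdots \subseteq \b(\ker(\a_1)) \subseteq \Gamma\big(H^{2r-m}(U,\QQ(r))\big)
\end{equation*}
and the isomorphism
\begin{equation*}
\b(\ker(\a_k))/\b(\ker(\a_{k+1})) \simeq d_k(\ker(\a_k))/d_k\big(\ker(\b)\cap \ker(\a_k)\big)
\end{equation*}
for $k=1,\ldots,m$. Everything after that is bookkeeping.

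For part (i), the forward implication is immediate: if $\cl_{r,m}$ is surjective, every inclusion in the chain collapses to an equality, so each successive quotient $\b(\ker(\a_k))/\b(\ker(\a_{k+1}))$ vanishes, and the conclusion is read off from the isomorphism. For the converse under the hypothesis $\Gamma\big(H^{2r-m}(U,\QQ(r))\big) \subseteq \IM(\b)$, the one extra point is that $\b(\ker(\a_1)) = \Gamma\big(H^{2r-m}(U,\QQ(r))\big)$. I would verify this by lifting $\gamma \in \Gamma\big(H^{2r-m}(U,\QQ(r))\big)$ to some $e \in \widetilde{E}_1^{-m,0}$ with $\b(e) = \gamma$ and observing that $\a_1(e) = \kappa(\b(e)) = \kappa(\gamma) = 0$, since $\kappa$ annihilates $\Gamma(W_0) = \Gamma\big(H^{2r-m}(U,\QQ(r))\big)$ (this identification follows from the long exact sequence in $\Gamma/J$ attached to $0 \to W_{-1} \to W_0 \to Gr^W_0 \to 0$, using $\Gamma(W_{-1})=0$).

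For part (ii), the key observation is that the map $h_m : J(W_{-m}) \to J(Gr^W_{-m})$ in diagram~\eqref{E003} is an isomorphism because $W_{-m-1}=0$ (indeed $W_{-m} = Gr^W_{-m}$, so $h_m$ is the identity). Combined with the relation $\lambda_m \circ d_m = h_m \circ \a_m$ on $\widetilde{E}_m^{-m,0}$, one immediately obtains $\ker\big(\lambda_m|_{d_m(\widetilde{E}_m^{-m,0})}\big) = d_m(\ker(\a_m))$. Using also the equality $\ker(\b)\cap\ker(\a_m) = \ker(\b)\cap\widetilde{E}_m^{-m,0}$ noted just before the theorem, the $k=m$ quotient in (i) matches the quotient displayed in (ii), so (ii) is a direct consequence of (i).

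Part (iii) is then the synthesis. Under its hypotheses, part (i) collapses the chain from $\b(\ker(\a_1))$ down to $\b(\ker(\a_m))$, while the lifting argument from (i) identifies $\b(\ker(\a_1))$ with $\Gamma\big(H^{2r-m}(U,\QQ(r))\big)$; since $\IM(\cl_{r,m}) = \b(\ker(\a_{m+1}))$, the cokernel of $\cl_{r,m}$ equals $\b(\ker(\a_m))/\b(\ker(\a_{m+1}))$, which by the isomorphism and the $k=m$ rewriting from (ii) is precisely $\ker\big(\lambda_m|_{d_m(\widetilde{E}_m^{-m,0})}\big)/d_m\big(\ker(\b)\cap\widetilde{E}_m^{-m,0}\big)$. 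The short exact sequence in (iii) assembles these facts. There is no real obstacle: the work is purely diagrammatic, and the only care needed is the consistent use of the identifications $\ker(\b)\cap\ker(\a_k) = \ker(\b)\cap\widetilde{E}_k^{-m,0}$, $\ker(\kappa) = \Gamma\big(H^{2r-m}(U,\QQ(r))\big)$, and $\lambda_k\circ d_k = h_k\circ \a_k$.
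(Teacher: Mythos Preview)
Your proposal is correct and follows essentially the same route as the paper, which derives the theorem by assembling the chain of inclusions $\IM(\cl_{r,m}) = \b(\ker(\a_{m+1})) \subseteq \cdots \subseteq \b(\ker(\a_1)) \subseteq \Gamma\big(H^{2r-m}(U,\QQ(r))\big)$, the successive-quotient isomorphism, the equality of the last inclusion under $\Gamma \subseteq \IM(\b)$, and the rewriting at $k=m$ via the isomorphism $h_m$. You have in fact supplied slightly more detail than the paper (e.g., the explicit lifting argument using $\ker(\kappa) = \Gamma(W_0)$), but the structure is identical.
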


Note that $\lambda_{1}$ is automatically injective when $ r=m $
by the theory of the Picard variety,  so that
$d_1(\ker (\a_1)) = 0$ in this case.  Since $\b$ is an isomorphism here,
we deduce

\begin{cor}\label{C56} Let us assume that $r=m$. Then
$\cl_{m,m}$ is surjective if and only if 
$d_{k}(\ker(\a_{k})) = 0$ for all $k=2,\ldots,m$. In particular,
$\cl_{1,1}$ is always surjective. (See also Example~\ref{EX1}.)
\end{cor}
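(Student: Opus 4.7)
The plan is to deduce this directly from Theorem~\ref{T01}(i), specialized to $r=m$. In that case the paper has already noted that $\b = \b_{m,m} : \widetilde{E}_1^{-m,0} \to \Gamma(Gr^W_0)$ is an isomorphism, so in particular $\ker(\b)=0$ and $\Gamma\bigl(H^m(U,\QQ(m))\bigr) \subseteq \IM(\b)$. This last inclusion is exactly the hypothesis needed for the ``if'' direction of Theorem~\ref{T01}(i), so the ``if and only if'' form applies. The quotient $d_k(\ker(\a_k))/d_k\bigl(\ker(\b)\cap\ker(\a_k)\bigr)$ then collapses to $d_k(\ker(\a_k))$, and surjectivity of $\cl_{m,m}$ is equivalent to the vanishing $d_k(\ker(\a_k))=0$ for all $k=1,\ldots,m$.

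The next step is to eliminate the case $k=1$ by hand, which is the only step requiring a new ingredient. By the commutativity $h_1\circ \a_1 = \lambda_1\circ d_1$ from the bottom row of diagram~(\ref{E003}), any $x\in\ker(\a_1)$ satisfies $\lambda_1(d_1(x)) = h_1(\a_1(x)) = 0$. Under the assumption $r=m$, the map $\lambda_1$ is injective by the classical theory of the Picard variety (as already recorded in the paper), and hence $d_1(x)=0$. Thus $d_1(\ker(\a_1))=0$ holds automatically and can be dropped from the list, leaving exactly the conditions $d_k(\ker(\a_k))=0$ for $k=2,\ldots,m$.

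For the final assertion about $\cl_{1,1}$ we take $m=1$: the index range $k=2,\ldots,m$ is then empty, the condition is vacuously satisfied, and surjectivity follows. I do not anticipate a genuine technical obstacle in this corollary, since the heavy lifting — constructing the weight spectral sequence, defining the $\a_k$, and proving that $h_k\circ\a_k = \lambda_k\circ d_k$ in (\ref{E003}) — is absorbed into Theorem~\ref{T01}. The only ``nontrivial'' input beyond that theorem is the injectivity of the Picard Abel-Jacobi map $\lambda_1$ for $r=m$, which is a classical fact and is cited rather than proved.
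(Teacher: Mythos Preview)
Your proposal is correct and follows essentially the same approach as the paper. The paper's own argument (given immediately before the corollary) is simply a terser version of yours: it notes that $\b$ is an isomorphism when $r=m$, that $\lambda_1$ is injective by the theory of the Picard variety so that $d_1(\ker(\a_1))=0$ automatically, and then invokes Theorem~\ref{T01}(i).
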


\begin{ex}
Assume $r\geq m=1$. 
Note that $d_{1} : \widetilde{E}^{-1,0}_{1} \to \widetilde{E}^{0,0}_{1} = 
\CH_{\hom}^{r}(X;\QQ)$. Assuming $\b$ is surjective, 
then Theorem \ref{T01}(iii) we deduce the short exact 
sequence
$$
0 \to \IM(\cl_{r,1}) \to \Gamma\big(H^{2r-1}(U,\QQ(r))\big) \to
\frac{\ker\big(\lambda_{1}\big|_{\IM(d_{1})}\big)}{d_{1}(\ker(\b))}\to 0
\,.
$$
Recalling $U = X\bs Y$  of dimension $d$, we have
that
$$
\lambda_1 : \IM(d_1) \to 
J\big(H^{2r-1}(X,\QQ(r))/H_Y^{2r-1}(X,\QQ(r))\big),
$$
where the denominator term $H_Y^{2r-1}(X,\QQ(r))$ in the jacobian is identified with its
image in $H^{2r-1}(X,\QQ(r))$, which apparently coincides with
$\Gy\big(H^{2r-3}(Y^{[1]},\QQ(r-1))\big)$ by a standard mixed Hodge theory argument
(Deligne).
Taking limits, we arrive at the short exact sequence
$$
0\to \IM(\lim(\cl_{r,1})) \to \Gamma\big(H^{2r-1}(\CC(X),\QQ(r))\big)\to
$$
$$
 \frac{\ker \big(\CH^r_{\hom}(X;\QQ) {\buildrel AJ\over\longrightarrow} J\big(
H^{2r-1}(X,\QQ(r))/N^1H^{2r-1}(X,\QQ(r))\big)
\big)}{\lim_{\buildrel\to\over U}d_1(\ker(\b))}\to 0,
$$
where $N^pH^i(X,\QQ)$ is the $p$-th coniveau filtration. 
However, if for example $r=d$, then using the fact that a zero-cycle on
a projective variety is homologous to zero if and only its degree is $0$, we see
$\b$ is surjective in this case, and that
$$
\frac{\CH^d_{\hom}(X;\QQ)}{\displaystyle{\lim_{\buildrel\longrightarrow\over U}}
\, d_1(\ker(\b))} = 0,
$$
owing to the fact that any finite set of points on $X$ lies on a smooth divisor  in $X$.
Therefore we arrive at the following result.
\end{ex}

\begin{cor}  [\cite{SJK-L}]
$$
\lim(\cl_{d,1}) : \CH^d(\Spec(\CC(X)),1;\QQ) \to
\Gamma\big(H^{2d-1}(\CC(X),\QQ(d))\big)
$$
is surjective.
\end{cor}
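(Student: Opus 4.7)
The corollary is the case $r=d$ of the short exact sequence derived in the preceding Example. The plan is therefore to verify the two geometric ingredients invoked at the end of that Example, namely that $\b$ is surjective for $r=d$, $m=1$, and that the right-hand quotient vanishes after passing to the limit over Zariski opens $U\subset X$.

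For the first point I would unwind the low-dimensional pieces with $m=1$. By~\eqref{111}, $\Gamma(Gr_0^W H^{2d-1}(U,\QQ(d)))$ is the quotient of rational $(d-1,d-1)$ Hodge classes on the smooth $(d-1)$-dimensional variety $Y^{[1]}$ by the Gysin image from $Y^{[2]}$, i.e., component-wise degrees modulo boundaries. On the source side $\widetilde{E}_1^{-1,0}$ is built from $\CH^{d-1}(Y^{[1]};\QQ)^{\circ}$, and $\b$ is essentially the degree-on-components map, so its surjectivity reduces to the classical fact that on a smooth projective variety a zero-cycle is homologous to zero if and only if each component-degree vanishes.

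Since $m=1$, the vanishing hypothesis for $k=1,\ldots,m-1$ of Theorem~\ref{T01}(iii) is vacuous, and one obtains the short exact sequence in the Example. The heart of the argument is then to show that its right-hand term vanishes in the limit. Given any homologically trivial zero-cycle $\xi$ on $X$, its finite support lies on some smooth very ample divisor $Y\subset X$ by Bertini; with $U := X\setminus Y$, one can write $\xi = \Gy(\tilde\xi) = d_1(\tilde\xi)$ for a zero-cycle $\tilde\xi\in\CH^{d-1}(Y;\QQ)$. Because the degree of $\tilde\xi$ on $Y$ equals the degree of $\xi$ on $X$ and is therefore zero, we have $\tilde\xi \in \CH^{d-1}(Y;\QQ)^{\circ} = \widetilde{E}_1^{-1,0}$ and $\b(\tilde\xi) = 0$; hence $\xi \in d_1(\ker(\b))$ in the limit. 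The main obstacle is conceptual rather than technical: one must correctly identify $\b$ and $d_1$ in this codimension with their classical meaning (degree and Gysin pushforward of zero-cycles), after which the only geometric input needed is Bertini together with the degree criterion for homological triviality of zero-cycles.
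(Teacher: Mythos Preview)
Your proposal is correct and follows essentially the same route as the paper: you verify surjectivity of $\b$ via the degree criterion for zero-cycles, and then kill the limit cokernel by placing the support of any $\xi\in\CH^d_{\hom}(X;\QQ)$ on a smooth (irreducible) divisor, exactly as the paper does with the remark that any finite set of points lies on a smooth divisor. One small imprecision: your description of $\Gamma(Gr_0^W)$ omits the kernel condition under $\Gy$ to $Y^{[0]}=X$, but this is harmless since the corresponding condition is built into $\CH^{d-1}(Y^{[1]};\QQ)^{\circ}$ on the source side and the argument goes through unchanged.
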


\begin{ex}\label{EX01}  
The case $r=m=2$. We observe  that $\ker(\lambda_1) = 0$,
and that $\b$ is an isomorphism. Thus from
Theorem \ref{T01}(iii) and (\ref{E003}), we arrive at the
short exact sequence
$$
0 \to \IM(\cl_{2,2}) \to \Gamma\big(H^{2}(U,\QQ(2))\big) \to 
\ker\big(\lambda_{2}
\big|_{\IM(d_{2})}\big) \to 0.
$$
We have
$$
  d_2(\widetilde{E}_2^{-2,0}) \subseteq
\widetilde{E}_2^{0,-1} = \frac{\CH^2(X,1;\QQ)}{\Gy
\big(\CH^1(Y^{[1]},1;\QQ)\big)} \xrightarrow{\lambda_{2}}
J\biggl(\frac{H^2(X,\QQ(2))}{H^2_Y(X,\QQ(2))}\biggr).
$$
There is an exact sequence
$$
0\to d_2(\widetilde{E}_2^{-2,0}) \to 
\frac{\CH^2(X,1;\QQ)}{\Gy\big(\CH^1(Y^{[1]},1;\QQ)\big)}
\to \CH^2(U,1;\QQ),
$$
hence
$$
d_{2}(\widetilde{E}_2^{-2,0}) = \text{\rm Image\ of}\ 
\CH^{1}(Y,1;\QQ) \to \frac{\CH^2(X,1;\QQ)}{\Gy\big(\CH^1(Y^{[1]},1;\QQ)\big)}
\,.
$$
Note that 
$$
\CH^1(Y^{[1]},1) = \big(\CC^{\times}\big)^{\oplus n},
$$
and recall we have $Y^{[1]} = \coprod_{i=1}^n Y_i$. {\it Thus $\cl_{{2,2}}$ is surjective if
and only if
$\lambda_{2}$ is injective on the subgroup of cycles in 
$\CH^{2}(X,1;\QQ)$ supported on $Y$, modulo the image
of the space of decomposables in 
$\CH^{}(X,1;\QQ)$, supported on $Y$.} 
Now let $\CH_{\dec}^{2}(X,1;\QQ) := $
$$
\IM\big(\CH^{1}(X,0;\QQ) \otimes 
\CH^{1}(X,1;\QQ)
\to \CH^{2}(X,1;\QQ)\big),
$$
under the  product  on the higher Chow groups, and
$$
\CH^{2}_{\ind}(X,1;\QQ) := 
\frac{\CH^{2}(X,1;\QQ)}{\CH^{2}_{\dec}(X,1;\QQ)}.
$$
Then
$$
\lim(\cl_{2,2}): \CH^{2}\big(\Spec(\CC(X)),2;\QQ)\big) \to
\Gamma\big(H^{2}(\CC(X),\QQ(2))\big),
$$
is surjective if and only if
$$
AJ : \CH^{2}_{\ind}(X,1;\QQ) \to  J\biggl(
\frac{H^{2}(X,\QQ(2))}{N^{1}H^{2}(X,\QQ(2))}\biggr),
$$
is injective\footnote{This is also pointed out in  \cite{MSa} (Remark 4.4).}.
 In summary,

\begin{cor}
\[
\frac{\Gamma\big(H^{2}(\CC(X),\QQ(2))\big)}{\lim(\cl_{2,2})
\big(\CH^{2}\big(\Spec(\CC(X)),2;\QQ)\big)\big)} \simeq 
\]
\[
\ker AJ : \CH^{2}_{\ind}(X,1;\QQ) \to J\biggl(
\frac{H^{2}(X,\QQ(2))}{N^{1}H^{2}(X,\QQ(2))}\biggr).
\]
(See also Corollary \ref{C100}.)
\end{cor}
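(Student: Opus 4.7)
The plan is to apply the direct limit functor $\lim_{\to U}$ (over the Zariski opens $U = X\bs Y$ of $X$) to the short exact sequence
\[
0 \to \IM(\cl_{2,2}) \to \Gamma\big(H^{2}(U,\QQ(2))\big) \to \ker\big(\lambda_{2}\big|_{\IM(d_{2})}\big) \to 0
\]
already established in Example~\ref{EX01}, and then to identify each limit term with the corresponding term in the asserted isomorphism. Since filtered colimits are exact, the middle term limits to $\Gamma\big(H^2(\CC(X),\QQ(2))\big)$ by the definition in Section~\ref{SEC02}(vi), while the left term limits to $\IM(\lim(\cl_{2,2}))$. Thus the quotient on the left-hand side of the asserted isomorphism equals $\lim_{\to U}\ker\big(\lambda_2\big|_{\IM(d_2)}\big)$, and the remaining work is to match this limit with the kernel of $AJ$ on $\CH^2_{\ind}(X,1;\QQ)$.

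For the target of $\lambda_2$, Example~\ref{EX01} gives
\[
\lambda_2 : d_2(\widetilde{E}_2^{-2,0}) \to J\!\left(\frac{H^2(X,\QQ(2))}{H^2_Y(X,\QQ(2))}\right).
\]
The image of $H^2_Y(X,\QQ(2))$ in $H^2(X,\QQ(2))$ is the kernel of the restriction to $H^2(U,\QQ(2))$, and as $U$ ranges over all Zariski opens this kernel exhausts the coniveau piece $N^1 H^2(X,\QQ(2))$ by the very definition of the coniveau filtration. Hence in the limit the target becomes $J\big(H^2(X,\QQ(2))/N^1 H^2(X,\QQ(2))\big)$.

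For the source, I would start from the description
\[
d_2(\widetilde{E}_2^{-2,0}) = \IM\!\left(\CH^1(Y,1;\QQ) \to \frac{\CH^2(X,1;\QQ)}{\Gy\big(\CH^1(Y^{[1]},1;\QQ)\big)}\right)
\]
of Example~\ref{EX01} and observe that $\Gy\big(\CH^1(Y^{[1]},1;\QQ)\big) = \bigoplus_i [Y_i]\cdot(\CC^\times\otimes\QQ)$ lies inside $\CH^2_{\dec}(X,1;\QQ)$ under the product on higher Chow groups. In the direct limit over $U$, enlarging $Y$ absorbs every decomposable class into this Gysin image, while every element of $\CH^2(X,1;\QQ)$ is represented by a cycle supported on a divisor and hence lifts to $\CH^1(Y,1;\QQ)$ for some $Y$. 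The step I expect to require the most care is making these two claims precise and verifying that the resulting identification makes $\lim_{\to U}\lambda_2$ agree with the Abel-Jacobi map $AJ$ on $\CH^2_{\ind}(X,1;\QQ)$; once this is in hand, passing to kernels produces the right-hand side of the stated isomorphism.
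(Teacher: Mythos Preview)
Your proposal is correct and follows essentially the same route as the paper: the corollary is stated there as a summary of Example~\ref{EX01}, obtained by passing to the direct limit over $U$ in the short exact sequence, identifying the limit of the targets $J\big(H^2(X,\QQ(2))/H^2_Y(X,\QQ(2))\big)$ with $J\big(H^2(X,\QQ(2))/N^1H^2(X,\QQ(2))\big)$, and identifying the limit of the sources $d_2(\widetilde{E}_2^{-2,0})$ with $\CH^2_{\ind}(X,1;\QQ)$ via the same two observations you make (support of $\CH^2(X,1)$ lies in codimension $\geq 1$, and $\Gy\big(\CH^1(Y^{[1]},1;\QQ)\big)$ exhausts the decomposables as $Y$ grows). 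The paper does not spell out these limit identifications in more detail than you do.
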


In particular, $\lim(\cl_{2,2})$ above
is surjective if $\CH^{2}_{\ind}(X,1;\QQ) = 0$. We recall Bloch's
conjecture which says in the case that $X$ is a surface,
$p_g(X) = 0 \Leftrightarrow$ the Albanese map $\kappa: \CH^2_{\deg 0}(X)
\to$ Alb$(X)$ is an isomorphism. Equivalently, this amounts to saying 
that $p_g(X) = 0 \Leftrightarrow$ the motive of $X$ degenerates.
The degeneration of the motive of $X$ implies that 
$\CH^{2}_{\ind}(X,1;\QQ) = 0$ (\cite{CS}).
So according to Bloch's
conjecture, if $X$ is a surface with $p_g(X) = 0$, then $\lim(\cl_{2,2})$  
is surjective.
\end{ex}

\section{\ Amending the Beilinson-Hodge conjecture}\label{SEC06}

For any smooth quasi-projective variety $U/\CC$ of dimension~$ d $
we consider the following three regions for the pair $(r,m)$
(see Figure~\ref{regions}):
\begin{enumerate}
\item[I:]
$r>m>0$ and $ r\le d $;

\item[II:]
$r>m$ and $r>d$;

\item[III:]
$r<m$.
\end{enumerate}

\begin{figure}[htbp]
\begin{center}
\input{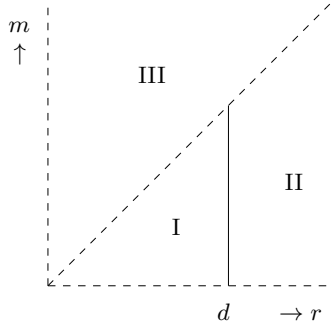}
\caption{The three regions}
\label{regions}
\end{center}
\end{figure}
The corresponding cycle class map 
\begin{equation*}
\cl_{r,m}: \CH^{r}(U,m;\QQ) \to \Gamma\big(H^{2r-m}(U,\QQ(r))\big)
\end{equation*}
is surjective in regions II and III
since there the right-hand side is trivial
(see Corollary 6.6 on page 85 in \cite{Ja1}).
We shall show below that
 at every point in region~I surjectivity fails in general.
Thus the only open cases are the diagonal ($ r=m $),
where surjectivity is the Beilinson-Hodge conjecture
as formulated in \cite{A-S}, 
and the $r$-axis, where surjectivity corresponds to the  Hodge conjecture
extended to such $U$.

\begin{theorem}\label{T45} {For a fixed $ d $, assume} $(r,m)$
lies in region~I.
Then there exists a smooth quasi-projective variety $U/\CC$
of dimension $d$ such that $\cl_{r,m}$ fails to be
surjective.
\end{theorem}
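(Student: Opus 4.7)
The plan is to reduce the general case in region~I to Jannsen's $m=1$ counterexample \cite[Cor.~9.11]{Ja1}, bootstrapped via a product with $\mathbb{G}_m^{\,m-1}$ and a projective space of the correct dimension. Fix $(r,m)$ in region~I and set $r_0 := r-m+1 \ge 2$. Jannsen's construction produces a smooth quasi-projective $U_0/\CC$ of dimension $n_0 = r_0$ (this is essentially forced: $H^{2r_0-1}(U_0)\ne 0$ requires $n_0 \ge r_0$, and one can realize $n_0 = r_0$) carrying a Hodge class $\eta_0 \in \Gamma\big(H^{2r_0-1}(U_0,\QQ(r_0))\big) \setminus \IM(\cl_{r_0,1})$.

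Form $U := U_0 \times \mathbb{G}_m^{\,m-1} \times \PP^{d-r}$, smooth quasi-projective of dimension $d$ (the exponent is $\ge 0$ since $(r,m)$ lies in region~I), and let $z_1,\dots,z_{m-1}$ be coordinates on the $\mathbb{G}_m$-factors. By the Künneth formula for MHS, the class
\[
 \eta \;:=\; \eta_0 \otimes [d\log z_1] \otimes \cdots \otimes [d\log z_{m-1}] \otimes 1_{\PP}
\]
is a well-defined element of $\Gamma\big(H^{2r-m}(U,\QQ(r))\big)$ (the degree and Tate twist both match). Suppose, for a contradiction, that $\eta = \cl_{r,m}(\xi)$ for some $\xi \in \CH^r(U,m;\QQ)$. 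Each $[d\log z_i]$ is $\cl_{1,1}$ of the tautological element $z_i \in \CH^1(\mathbb{G}_m,1) \cong \CC^\times$, and the iterated residue maps along the divisors $z_i = 0$ (boundary maps in the localization sequences for $\mathbb{G}_m \hookrightarrow \A^1$) swap $[d\log z_i]$ for the unit, compatibly on higher Chow groups and on Betti cohomology. Composing these iterated residues with restriction to a closed point of $\PP^{d-r}$ then extracts from $\xi$ a class $\xi_0 \in \CH^{r_0}(U_0,1;\QQ)$ with $\cl_{r_0,1}(\xi_0) = \eta_0$, contradicting our choice of $\eta_0$.

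The main obstacle is the compatibility of $\cl_{r,m}$ with the Künneth decomposition and with iterated residues and fiber restrictions. Specifically, one must verify that the composition of Künneth projection onto the distinguished summand $H^{2r_0-1}(U_0) \otimes H^{m-1}(\mathbb{G}_m^{\,m-1}) \otimes H^0(\PP^{d-r})$ of $H^{2r-m}(U)$ with the cycle class map coincides with the composition of iterated residues and fiber restriction on higher Chow groups followed by $\cl_{r_0,1}$. This is a routine but careful diagram chase resting on the product structure on $\cl_{r,m}$ (via its Deligne-cohomology realization, see \cite{EV}) and the localization formalism for Bloch's higher Chow groups (\cite{B}); once that compatibility is recorded, the rest of the argument is just bookkeeping of Künneth summands.
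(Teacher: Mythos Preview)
Your argument is correct and takes a genuinely different route from the paper's. The paper constructs, for each $(r,m)$ in region~I, an explicit pair $(X,Y)$ by cutting a high-degree hypersurface $Z_0\subset\PP^{r+1}$ with $m-1$ general hyperplanes, blowing up two general points of the resulting complete intersection $W$, and crossing with $\PP^{d-r}$; it then applies the contrapositive of Theorem~\ref{T01}(i), exhibiting a class in $\ker(\alpha_1)\setminus\ker(d_1)$ via Mumford--Roitman on $\CH_0(W;\QQ)$ together with a Lefschetz-type vanishing of the relevant $Gr_{-k}^W$. Your reduction to the $m=1$, $d=r$ case via $\mathbb{G}_m^{\,m-1}\times\PP^{d-r}$ uses only the compatibility of $\cl$ with localization boundaries and with pullback along a regular closed immersion --- both standard, the first already invoked around diagram~\eqref{E27} --- and the iterated residue plus fiber restriction \emph{is} the K\"unneth projection on cohomology, so no separate ``K\"unneth compatibility'' needs to be checked. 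Your approach is shorter and makes transparent that all of region~I follows formally from its $m=1$ edge; the paper's approach is self-contained and illustrates the machinery of Section~\ref{SEC04}. One caveat: you should verify that \cite[Cor.~9.11]{Ja1} really furnishes, for every $r_0\ge 2$, a $U_0$ of dimension exactly $r_0$ with $\cl_{r_0,1}$ non-surjective; if that citation is not quite that uniform, note that the $m=1$ case is in any event the simplest instance of the paper's own construction.
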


\begin{proof}
Special instances of this are already
established in \cite{K-L}. Let $Z_{0}\subset \PP^{r+1}$ be a hypersurface of
sufficiently large degree,
$\PP^{r}_{1},\ldots,\PP^{r}_{m-1}\subset \PP^{r+1}$ general
hyperplanes such that if we put
$$
W = \PP^{r}_{1}\cap\cdots\cap\PP^{r}_{m-1}\cap Z_{0},
$$
then $ W $ is smooth and $H^{0}(W,\Omega_{W}^{q}) \ne 0$ where
$q = r-m+1 = \dim W \geq 2$ since $r>m$.
Fix points $P,\ Q$ in $W$ such that the class of $P-Q$ is non-trivial in $\CH_{0}(W;\QQ)$
(possible by Mumford$/$Roitman, see \cite[Ch.15]{Lew}),
and consider the blow-up
$$
Z := B_{\{P,Q\}}(Z_{0}) {\buildrel \pi\over\longrightarrow} Z_{0}.
$$
Set 
$ E_{1} = \pi^{-1}(P) $, $ E_{2} = \pi^{-1}(Q) $,
$ E_{2+i} = \ol{\pi^{-1}\big( \PP^{r}_{i}\cap Z_{0} \bs \{P,\ Q\}\big)} $ for 
$ i=1,\ldots,m-1 $, and $ E = \bigcup_{j=1}^{m+1}E_j $.
Observe that
\[
\bigcap_{i=1}^{m-1} E_{2+i} = B_{\{P,Q\}}(W).
\]
Finally for $k  = d-r\geq 0$,
put
\[
X = Z\times \PP^{k}
\]
\[
Y = \bigcup_{i=1}^{m+1}E_{i}\times \PP^{k}
\]
so that $ X $ has dimension~$ d\ge2 $.  Note that 
\[
Y^{[m+1]} = \emptyset,
\]
\[
Y^{[m]} = \biggl\{\big\{E_1 \cap B_{\{P,Q\}}(W)\big\}\times \PP^k\biggr\} \ \coprod\ 
\biggl\{\big\{E_2\cap B_{\{P,Q\}}(W)\big\}
\times \PP^k\biggr\}
\]
\[
 \simeq \big\{\PP^{r-m}\times \PP^k\big\} \coprod \big\{\PP^{r-m} \times \PP^k\big\},
\]
and that
$B_{\{P,Q\}}(W) \times \PP^k$ is an irreducible component
of $Y^{[m-1]}$.
Then with regard to  diagram~\eqref{E004}, $\b$
is an isomorphism, and yet for $k = d-r$ there is a class
$$
\xi \in \CH^{r-m}(Y^{[m]};\QQ)^{\circ}
$$
of the form $\{0-$cycle$\}\times\PP^k$, for which 
$$
\Gy(\xi) \ne 0 \in \CH^{r-m+1}_{\hom}(Y^{[m-1]};\QQ),
$$
but $\a_{1}(\xi) = 0 \in J(W_{-1})$.  To see why $\a_{1}(\xi) = 0$, observe
that since $W_{-1}$ has negative weight, it suffices to show that the values
of $\a_{1}(\xi)$ map to zero in $J(Gr_{-k}^W)$ for $k\geq 1$. 
But the relevant part of $Gr_{-k}^W$ involves the cohomology
$H^{2r-2m+k}(E^{[m-k]})\otimes H^0(\PP^k) \subset  H^{2r-2m+k}(Y^{[m-k]})$,
which in the end involves the mixed Hodge structure of
\[
Z\bs E= Z_0 \bs 
\underbrace{\{\PP^r_1\cap Z_0 \cup\cdots\cup \PP^r_{m-1}\cap Z_0\}}_{=: E_0}.
\]
But $E_0^{[m-k]}$ is a 
union of smooth hypersurfaces of dimension $r-m+k$. Since by Lefschetz, 
the cohomology of hypersurfaces is only ``non-trivial'' in the middle dimension,
and in light of the description of  $Gr_{-k}^W$ in (\ref{111}), it suffices to show
that $2r-2m+k \ne r-m+k$ (hence $Gr_{-k}^W = 0$, as it reduces
to the same thing as the homology of a simplex).
But $2r-2m+k = r-m+k \Leftrightarrow r=m$ which is not the case for region~I.
Thus $\a_{1}(\xi) = 0$, 
hence  $\lambda_{1}\circ d_{1}(\xi) = 0$ 
as well. In particular,
$\xi\in \ker(\a_1)$, $\ker(\b) = 0$, and $d_{1}(\xi) = 
\Gy(\xi) \ne 0$. 
Thus by Theorem \ref{T01}(i), $\cl_{r,m}$ fails to be surjective. 
\end{proof}

\section{\ Integral coefficients I}\label{SEC012}

This section serves as a necessary forerunner to Section~\ref{SEC010}.
Along the way we prove some results that are either new, or appear to be  known
only among experts.
Let $X/\CC$ be a smooth projective variety and $Y\subset X$ a proper subvariety. There is
a short exact sequence
\[
0\to \frac{H^{2r-m}(X,\ZZ(r))}{H_Y^{2r-m}(X,\ZZ(r))} \to H^{2r-m}(X\bs Y,\ZZ(r))
\to H^{2r-m+1}_Y(X,\ZZ(r))^0\to 0,
\]
where, for notational simplicity,
we write $H_Y^{2r-m}(X,\ZZ(r))$ instead of $\IM(H_Y^{2r-m}(X,\ZZ(r)))$,
and let
\[
H^{2r-m+1}_Y(X,\ZZ(r))^0 := \ker\big(H^{2r-m+1}_Y(X,\ZZ(r)) \to H^{2r-m+1}(X,\ZZ(r))\big).
\]
Let us assume for the moment that
\[
\frac{H^{2r-m}(X,\ZZ(r))}{H_Y^{2r-m}(X,\ZZ(r))}
\]
is torsion-free.
Except for the obvious case $ 2r-m = 0$ this also holds in the
following two cases:

\smallskip
(i) $2r-m=1$. Here
\[
\frac{H^{1}(X,\ZZ(r))}{H_Y^{1}(X,\ZZ(r))} = H^1(X,\ZZ(r)),
\]
is torsion-free, as can be seen from the long exact sequence
of cohomology of $X$ associated to the short exact sequence
\begin{equation} \label{expsequence}
0 \to \ZZ(1) \to \CC \to \CC^{\times} \to 0.
\end{equation}

(ii) $2r-m=2$. Let  $Y$ be a divisor such that the image 
$H^2_Y(X,\ZZ(2)) \to H^2(X,\ZZ(2))$ is precisely the algebraic
part $H^2_{\alg}(X,\ZZ(2))$. Then by
the Lefschetz $(1,1)$ theorem,
$H^{2}(X,\ZZ(2))\big/H^{2}_{\alg}(X,\ZZ(2))$
is torsion-free, 
{and $ H^{2}(X,\ZZ(2)) / H_Y^{2}(X,\ZZ(2)) $ is isomorphic to this group.}

\bigskip
Then by purity of negative weight and torsion-freeness, 
\[
\Gamma\biggl(\frac{H^{2r-m}(X,\ZZ(r))}{H_Y^{2r-m}(X,\ZZ(r))}\biggr) = 0 \ {\rm for}\ m>0.
\]
Corresponding to this is a commutative
diagram (use the fact that cycle class maps are compatible with localization sequences for
the left-hand square, and the commutativity of the right-hand square can be deduced from an extension class interpretation of the Abel-Jacobi map (see \cite{KLM}))
\begin{equation}\label{E27}
\begin{matrix}
\frac{\CH^r(X\bs Y,m)}{\CH^r(X,m)}&\hookrightarrow&\CH^r_Y(X,m-1)^0&\xrightarrow{\alpha}&\CH_{\hom}^r(X,m-1)\\
&\\
\ul{\cl}_{r,m}\biggl\downarrow\quad&&\ \biggl\downarrow\ul{\b}&&\quad\biggr\downarrow \ul{AJ}\\
&\\
\Gamma H^{2r-m}(X\bs Y,\ZZ(r))&\hookrightarrow&\Gamma H^{2r-m+1}_Y(X,\ZZ(r))^0&\to&
J\biggl(\frac{H^{2r-m}(X,\ZZ(r))}{H_Y^{2r-m}(X,\ZZ(r))}\biggr),
\end{matrix}
\end{equation}
where
\[
\CH^r_{\hom}(X,m-1) = \ker\big(\CH^r(X,m-1) \to H^{2r-m+1}(X,\ZZ(r))\big),
\]
and $\CH^r_Y(X,m-1)^0 :=$
\[
 \ker\big(\CH^r_Y(X,m-1) \to H^{2r-m+1}(X,\ZZ(r))\big) =
\alpha^{-1}\CH^r_{\hom}(X,m-1).
\]

\begin{ex}\label{EX1} 
Suppose that $r=m=1$. Then $\ul{\b}$ and $\ul{AJ}$ are isomorphisms,
{hence the same holds for} $\ul{\cl}_{1,1}$ by~\eqref{E27}.
If we make the identifications
$\CH^1(X,1) \simeq \CC^{\times}$, $\CH^1(X\bs Y,1) = \CO^{\times}_{X\bs Y}(X\bs Y)$, then
we arrive at the short exact sequence
\[
0 \to \CC^{\times}\to  \CO^{\times}_{X\bs Y}(X\bs Y) \xrightarrow{d\log} \Gamma H^1(X\bs Y,\ZZ(1))
\to 0,
\]
where ${\cl}_{1,1} = d\log $ is well-known (see  \cite{KLM}).
This is also a consequence of the identification
\[
\CO^{\times}_{X\bs Y}(X\bs Y)  \simeq H^1_{\Dd}(X\bs Y,\ZZ(1))
\,.
\]
The surjectivity of $d\log$ in this case  is also proven in \cite{A-K}.
\end{ex}

\begin{ex}\label{EX2}
Suppose that $(r,m) = (2,1)$, and $Y = Y_1\cup\cdots \cup Y_n\subset X$ is
a divisor.  We observe that there is a short exact sequence
\[
\frac{H^2_Y(X,\CC)}{F^2H^2_Y(X,\CC) + H^2_Y(X,\ZZ(2))}\hookrightarrow
H^3_{\Dd,Y}(X,\ZZ(2)) \twoheadrightarrow \Gamma H^3_{Y}(X,\ZZ(2)),
\]
where the first term may be identified with
\[
   \oplus_{j=1}^n H^0(Y_j,\CC/\ZZ(1))
\simeq
   (\CC^\times)^{\oplus n}
=:
   \CH^2_{Y,\dec}(X,1)
.
\]
There is a canonical isomorphism
$ \CH^2_Y(X,1) \xrightarrow{\sim} H^3_{\Dd,Y}(X,\ZZ(2)) $
by \cite[Lemma~3.1]{Ja2}.
(In loc.\ cit.\ this is formulated in terms of $K$-theory,
but because of the particular indices, it gives exactly
this result.) 
We therefore obtain a short exact sequence
(using the fact that $ \ul{\b} $ factors through Deligne cohomology)
\[
0 \to \CH^2_{Y,\dec}(X,1) \to \CH^2_{Y}(X,1)^0\xrightarrow{\ul{\b}}
\Gamma H^3_{Y}(X,\ZZ(2))^0 \to 0,
\]
so that the snake lemma applied to~\eqref{E27} yields an exact
sequence
\[
0\to \ker(\ul{\cl}_{2,2}) \to  \CH^2_{Y,\dec}(X,1) \to \ker\big(\ul{AJ}\big|_{{\rm Im}(\alpha)}\big)
\to {\rm cok}(\ul{\cl}_{2,2}) \to 0.
\]
\end{ex}

When taking limits over $ Y $ in the above example $ \alpha $ becomes surjective,
and $ \lim(\ul{\cl}_{2,2}) = d\log_2 $ in Section~\ref{SEC01}.
So using the description of $ \ker(\ul{\b}) $ for each $ Y $ above we obtain
the following result.

\begin{cor}\label{C100}
\[
\frac{\Gamma H^2(\CC(X),\ZZ(2))}{\IM(d\log_2)} \simeq
\ker\biggl[\frac{\CH_{\hom}^2(X,1)}{\CH^2_{\dec}(X,1)} \xrightarrow{\ul{AJ}} J\biggl(
\frac{H^2(X,\ZZ(2))}{H^2_{\alg}(X,\ZZ(2))}\biggr)\biggr].
\]
\end{cor}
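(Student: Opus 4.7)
The plan is to take the direct limit of the four-term exact sequence derived at the end of Example~\ref{EX2},
\[
0\to \ker(\ul{\cl}_{2,2}) \to  \CH^2_{Y,\dec}(X,1) \to \ker\big(\ul{AJ}\big|_{\IM(\alpha)}\big) \to \coker(\ul{\cl}_{2,2}) \to 0,
\]
as $Y = Y_1\cup\cdots\cup Y_n\subset X$ ranges over all NCD divisors with smooth components, ordered by inclusion. Since direct limits are exact, the resulting sequence is again four-term exact, so the task reduces to identifying each limiting term with the groups appearing in the claimed isomorphism.

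By the remark preceding the corollary, $\lim_Y(\ul{\cl}_{2,2}) = d\log_2$, so the fourth term limits to $\Gamma H^2(\CC(X),\ZZ(2))/\IM(d\log_2)$. For the third term two observations are needed: (a) $\alpha:\CH^2_Y(X,1)^0 \to \CH^2_{\hom}(X,1)$ becomes surjective in the limit, since every homologically trivial class in $\CH^2(X,1)$ is supported on some divisor; and (b) $\lim_Y H^2_Y(X,\ZZ(2)) = H^2_{\alg}(X,\ZZ(2))$ by the integral Lefschetz $(1,1)$ theorem. Together these give
\[
\lim_Y \ker\big(\ul{AJ}\big|_{\IM(\alpha)}\big) = \ker\biggl[\ul{AJ}: \CH^2_{\hom}(X,1) \to J\biggl(\frac{H^2(X,\ZZ(2))}{H^2_{\alg}(X,\ZZ(2))}\biggr)\biggr].
\]

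The heart of the argument lies in the second term. The group $\CH^2_{Y,\dec}(X,1) = (\CC^\times)^{\oplus n}$ maps into $\CH^2_{\hom}(X,1)$ by $(c_1,\ldots,c_n) \mapsto \sum_j c_j\cdot [Y_j]$, a decomposable class; as $Y$ grows, its image exhausts $\CH^2_{\dec}(X,1)$. Moreover each such decomposable class lies in $\ker(\ul{AJ})$ in the limit, because its Abel-Jacobi image is supported on the algebraic part of $H^2(X,\ZZ(2))$, which has been quotiented out. Hence the image of $\lim_Y \CH^2_{Y,\dec}(X,1)$ inside the limiting third term equals $\CH^2_{\dec}(X,1)$, and the limit four-term sequence reduces to
\[
0 \to \CH^2_{\dec}(X,1) \to \ker(\ul{AJ}) \to \frac{\Gamma H^2(\CC(X),\ZZ(2))}{\IM(d\log_2)} \to 0,
\]
yielding the stated isomorphism. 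The chief technical obstacle is to verify that decomposable classes indeed have trivial Abel-Jacobi image modulo $H^2_{\alg}$, and that the limit of decomposables attached to divisors recovers $\CH^2_{\dec}(X,1)$ globally; both reduce to the compatibility of the Abel-Jacobi map with the external product structure on higher Chow groups.
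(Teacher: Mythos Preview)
Your proof is correct and follows exactly the route the paper takes: pass to the direct limit over divisors $Y$ in the four-term exact sequence at the end of Example~\ref{EX2}, and identify each limiting term. The paper compresses this into one sentence (``taking limits over $Y$ \ldots\ using the description of $\ker(\ul{\b})$ for each $Y$''), whereas you spell out the identifications explicitly.

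One small remark: the ``chief technical obstacle'' you flag at the end is less of an obstacle than you suggest. That decomposable classes land in $\ker(\ul{AJ})$ in the limit is automatic from exactness of the four-term sequence --- the second term maps to the third by construction, no product compatibility of Abel--Jacobi is needed. The only genuine verification is that the image of $\lim_Y\CH^2_{Y,\dec}(X,1)$ under $\alpha$ coincides with $\CH^2_{\dec}(X,1)$, and this is immediate once one checks (as you do) that $\alpha$ sends $(c_1,\ldots,c_n)$ to $\sum_j c_j\cdot[Y_j]$, since $\CH^1(X,1)=\CC^\times$ and every divisor class arises as some $[Y_j]$.
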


Let us now consider the general situation of $(r,m)$
{with $ m>0 $}, so that we have the diagram~\eqref{E27}
tensored with $\QQ$. Then $\ul{\b}\otimes\QQ$ need
not be surjective; moreover  a detailed description of this map when
 $Y$ is a NCD leads
to the same kind of analysis as in Section~\ref{SEC04}.
Recall that by a weight argument,
$\Gamma H^{2r-m}(X\bs Y,\QQ(r)) = 0$ for $r \leq m-1$. 
With this in mind let us assume that $r\geq m$.
Then as $Y\subset X$ ranges
over all  pure codimension one subvarieties, the image of $\alpha$ in
(\ref{E27}) is
$\CH^r_{\hom}(X,m-1)$ because of dimensions. Referring to~\eqref{E27}$ {}_\QQ $, let us  put
\begin{equation}\label{E88}
N^1\CH^r(X,m-1;\QQ) := \lim_{{\buildrel \longrightarrow\over Y}}\alpha(\ker \ul{\b}\otimes\QQ),
\end{equation}
where $Y\subset X$ ranges over all pure  codimension 
one\footnote{Working with subvarieties of higher pure codimension, 
this gives rise to a descending filtration $\{N^p\CH^i(X,j;\QQ)\}_{p\geq 0}$ which
is finer that the coniveau filtration on $\CH^i(X,j;\QQ)$.}  algebraic subsets of $X$.
Note that from Example \ref{EX2}, $N^1\CH^2(X,1;\QQ) = \CH^2_{\dec}(X,1;\QQ)$.
Now fix a $j:Y\hookrightarrow X$  of pure codimension one, with desingularization
$\lambda : \widetilde{Y} \xrightarrow{\approx}Y$, and composite 
morphism $\sigma = j\circ\lambda: \widetilde{Y} \to X$.
By a weight argument (Deligne) together with the purity of $H^{2r-m}(X,\QQ(r))$,
 both Gysin images ($\sigma_{\ast}, j_{\ast}$)
in the commutative diagram below are
the same.
\[
\xymatrix{
H^{2r-m-2}(\widetilde{Y},\QQ(r-1)) \ar[rd]^-{\sigma_{\ast}} \ar[dd]_-{\lambda_{\ast}}
\\
&H^{2r-m}(X,\QQ(r))
\\
H_Y^{2r-m}(X,\QQ(r))\ar[ru]_-{j_{\ast}}
\,.
}
\]
Assuming the Hodge conjecture, we can find
$w$ in $ \CH^{d-1}(X\times\widetilde{Y};\QQ)$ with
$\sigma_{\ast}\circ [w]_{\ast} = {\rm Id}_{\IM(\sigma_{\ast})}$,
where 
\begin{equation*}
[w]_{\ast} : H^{2r-m}(X,\QQ(r)) \to H^{2r-m-2}(\widetilde{Y},\QQ(r-1))
\end{equation*}
is induced by $w$ (see~\cite[Prop.~7.4]{Lew}).
Note that 
\begin{equation}\label{EE8}
w_{\ast}\CH^r_{\hom}(X,m-1;\QQ) \subset \CH_{\hom}^{r-1}(\widetilde{Y},m-1;\QQ) \to
\ker\ul{\b}_{\QQ},
\end{equation}
where $\ul{\b}_{\QQ}  := \ul{\b}\otimes\QQ$. Let us similarly write
 \eqref{E27}$_{\QQ}$ for~\eqref{E27}
tensored with $ \QQ $, and let
\[
AJ_{\QQ} : \CH^r_{\hom}(X,m-1;\QQ) \to J\big(H^{2r-m}(X,\QQ(r))\big)
\]
be the (full) Abel-Jacobi map.  
Referring to  (\ref{E27}$)_{\QQ}$, we deduce
from the Hodge conjecture that
\[
\ker \big(\ul{AJ}_{\QQ}\big|_{\IM (\alpha)} \big) = \ker \big(AJ_{\QQ}\big|_{\IM (\alpha)}\big) + 
\alpha(\ker\ul{\b}_{\QQ}).
\]
Indeed, if $\xi \in \ker \big(\ul{AJ}_{\QQ}\big|_{\IM (\alpha)} \big)$, then 
$AJ_{\QQ}(\xi) = AJ_{\QQ}(\sigma_{\ast}\circ w_{\ast}(\xi))$ by functoriality of the Abel-Jacobi map.
Thus $\xi = \big(\xi - \sigma_{\ast}\circ w_{\ast}(\xi)\big) + \sigma_{\ast}\circ w_{\ast}(\xi)
\in  \ker \big(AJ_{\QQ}\big|_{\IM (\alpha)}\big) + \alpha(\ker\ul{\b}_{\QQ})$
by~\eqref{E27}${}_{\QQ}$.
In particular,
\begin{prop} Under the assumption of the
Hodge conjecture,
for a fixed $ Y $ as above
there are short exact sequences
\[
 \frac{ \ker AJ_{\QQ} \big|_{\IM (\alpha)}+ \alpha(\ker\ul{\b}_{\QQ})}{\alpha(\ker\ul{\b}_{\QQ})}\hookrightarrow
 \frac{\Gamma H^{2r-m}(X\bs Y,\QQ(r))}{{\IM}(\cl_{r,m})} 
 \twoheadrightarrow {\rm cok}(\ul{\b}_{\QQ})^0,
 \]
 where 
 \[
 {\rm cok}(\ul{\b}_{\QQ})^0 := \ker\big({\rm cok}(\ul{\b}_{\QQ}) \to {\rm cok}(\ul{AJ}_{\QQ})\big),
 \]
 \[ 
 \ul{AJ}_{\QQ} : {\IM}(\alpha) \to J\biggl(\frac{H^{2r-m}(X,\QQ(r))}{H_Y^{2r-m}(X,\QQ(r))}\biggr)
\,.
 \]
Taking the direct limit over $ Y $ we obtain a short exact sequence
 \[
 \frac{ \ker AJ_{\QQ} + N^1\CH^r(X,m-1;\QQ)}{N^1\CH^r(X,m-1;\QQ)}\hookrightarrow
 \frac{\Gamma H^{2r-m}(\CC(X),\QQ(r))}{{\IM}(\lim\cl_{r,m})} 
  \twoheadrightarrow \lim{\rm cok}(\ul{\b}_{\QQ})^0,
 \]
 where
 \[
  \lim{\rm cok}(\ul{\b}_{\QQ})^0 = \ker\big( \lim{\rm cok}(\ul{\b}_{\QQ}) \to {\rm cok}(\ul{AJ}_{\QQ})\big),
  \]
  \[ 
 \ul{AJ}_{\QQ} : \CH^r_{\hom}(X,m-1;\QQ) \to   J\biggl(\frac{H^{2r-m}(X,\QQ(r))}{N^1H^{2r-m}
 (X,\QQ(r))}\biggr).
 \]
\end{prop}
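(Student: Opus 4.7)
The plan is to apply the snake lemma to diagram~\eqref{E27}${}_\QQ$, identify the resulting connecting map with $\ul{AJ}_\QQ|_{\IM(\alpha)}$, substitute the Hodge-conjecture identity
\[
\ker\bigl(\ul{AJ}_\QQ|_{\IM(\alpha)}\bigr) = \ker\bigl(AJ_\QQ|_{\IM(\alpha)}\bigr) + \alpha(\ker \ul{\b}_\QQ)
\]
that is established just above the statement, and finally pass to the filtered colimit over $Y$.

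First I would verify that, for a fixed pure codimension one $Y$, both rows of~\eqref{E27}${}_\QQ$ refine to short exact sequences whose right-hand terms are $\IM(\alpha)$ on top and $\IM(\partial) \subseteq J\bigl(H^{2r-m}(X,\QQ(r))/H_Y^{2r-m}(X,\QQ(r))\bigr)$ on the bottom, where $\partial$ is the boundary map out of $\Gamma H^{2r-m+1}_Y(X,\QQ(r))^0$. The top row is the localization sequence for Bloch's higher Chow groups; the bottom is exact by the definition of the superscript~$0$ together with the vanishing, already invoked in this section, of $\Gamma$ on the pure, negatively weighted quotient $H^{2r-m}(X,\QQ(r))/H_Y^{2r-m}(X,\QQ(r))$ for $m > 0$. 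By commutativity of the right square of~\eqref{E27}, the induced vertical map between the right-hand terms is exactly $\ul{AJ}_\QQ|_{\IM(\alpha)}$, viewed with codomain $\IM(\partial)$.

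The snake lemma then yields a six-term exact sequence, from which one extracts
\[
0 \to \frac{\ker(\ul{AJ}_\QQ|_{\IM(\alpha)})}{\alpha(\ker\ul{\b}_\QQ)} \to \coker\ul{\cl}_{r,m} \to \ker\bigl(\coker\ul{\b}_\QQ \xrightarrow{\bar\partial} \coker\gamma\bigr) \to 0,
\]
where $\gamma := \ul{AJ}_\QQ|_{\IM(\alpha)}$ and $\bar\partial$ is the map induced by $\partial$. A short diagram chase identifies the rightmost term with $\coker(\ul{\b}_\QQ)^0$: since $\partial$ factors through $\IM(\partial)$, quotienting out $\IM(\ul{AJ}_\QQ|_{\IM(\alpha)})$ inside $\IM(\partial)$ has the same effect, for classes coming from $\Gamma H^{2r-m+1}_Y(X,\QQ(r))^0$, as doing so inside all of $J$. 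Substituting the Hodge-conjecture identity displayed above into the leftmost term then produces the first claim.

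For the limit statement, pass to the filtered colimit over $Y$ ranging over pure codimension one subvarieties; being exact, it preserves the short exact sequence. By the author's dimension remark just before~\eqref{E88}, $\IM(\alpha)$ exhausts $\CH^r_\hom(X,m-1;\QQ)$, while $\lim_Y \alpha(\ker\ul{\b}_\QQ)$ equals $N^1\CH^r(X,m-1;\QQ)$ by the very definition~\eqref{E88}. In the same limit, $H_Y^{2r-m}(X,\QQ(r))$ exhausts $N^1 H^{2r-m}(X,\QQ(r))$, so $\ul{AJ}_\QQ$ becomes the Abel-Jacobi map into $J\bigl(H^{2r-m}(X,\QQ(r))/N^1 H^{2r-m}(X,\QQ(r))\bigr)$, while $\coker\ul{\cl}_{r,m}$ becomes $\Gamma H^{2r-m}(\CC(X),\QQ(r))/\IM(\lim\cl_{r,m})$. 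The one genuinely delicate step is the identification of $\ker\bar\partial$ with $\coker(\ul{\b}_\QQ)^0$; everything else is mechanical snake-lemma bookkeeping combined with the Hodge-conjecture substitution.
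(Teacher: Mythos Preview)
Your proposal is correct and is essentially the argument the paper has in mind: the proposition is stated with ``In particular,'' immediately after the Hodge-conjecture identity for $\ker(\ul{AJ}_\QQ|_{\IM(\alpha)})$, and the intended proof is precisely the snake lemma applied to~\eqref{E27}${}_\QQ$ (as is done explicitly in the special case of Example~\ref{EX2}) followed by that substitution and the passage to the direct limit. Your identification of the right-hand term with $\coker(\ul{\b}_\QQ)^0$ via the factorization through $\IM(\partial)\hookrightarrow J$ is the only point requiring care, and you handle it correctly.
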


\begin{cor}\label{C116} Assume the Hodge conjecture and let $r\geq m$. Then
\[
 \frac{\Gamma H^{2r-m}(\CC(X),\QQ(r))}{{\IM}(\lim\cl_{r,m})}  = 0
\text{ implies }
 \ker AJ_{\QQ} \subset N^1\CH^r(X,m-1;\QQ).
 \]
\end{cor}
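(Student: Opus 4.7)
The plan is to observe that Corollary~\ref{C116} is an immediate consequence of the preceding Proposition, once one tracks what it means for the middle term in its short exact sequence to vanish. Specifically, the Proposition (taken under the same Hodge-conjecture hypothesis, after passing to the limit over $Y$) yields the short exact sequence
\[
0 \to \frac{\ker AJ_\QQ + N^1\CH^r(X,m-1;\QQ)}{N^1\CH^r(X,m-1;\QQ)} \to \frac{\Gamma H^{2r-m}(\CC(X),\QQ(r))}{\IM(\lim\cl_{r,m})} \to \lim{\rm cok}(\ul{\b}_{\QQ})^0 \to 0.
\]

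First I would invoke this sequence and use the assumption that the middle term vanishes. Exactness then forces the left-hand term to be zero, i.e.
\[
\ker AJ_\QQ + N^1\CH^r(X,m-1;\QQ) = N^1\CH^r(X,m-1;\QQ),
\]
which is exactly the desired containment $\ker AJ_\QQ \subseteq N^1\CH^r(X,m-1;\QQ)$. There is no real obstacle here, since all substantive work (the construction of $w$ via the Hodge conjecture, the identification \eqref{EE8}, and the decomposition $\xi = (\xi - \sigma_\ast w_\ast \xi) + \sigma_\ast w_\ast \xi$) has already been carried out in the Proposition; Corollary~\ref{C116} merely reads off one factor of the exact sequence. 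For clarity I would also remark that the hypothesis $r\ge m$ is what ensures, as noted before~\eqref{E88}, that the image of $\alpha$ in the limit equals $\CH^r_{\hom}(X,m-1;\QQ)$, so that $\ker AJ_\QQ$ makes sense on the left and matches up with $\ker(\ul{AJ}_\QQ|_{\IM(\alpha)})$ in the Proposition.
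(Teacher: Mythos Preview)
Your proposal is correct and matches the paper's approach: Corollary~\ref{C116} is stated without proof in the paper precisely because it follows immediately from the preceding Proposition by reading off the vanishing of the left-hand term in the limit short exact sequence. Your remark on the role of the hypothesis $r\ge m$ is a nice clarification of something the paper leaves implicit.
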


\begin{rem} 
(i)  Note that Corollary~\ref{C116} for $m=1$
is essentially a conjectural type question of Jannsen \cite[p.227]{Ja3}.

\smallskip
\noindent
(ii)  Let us (again) take the direct limit over $Y$ of diagram (\ref{E27}$)_{\QQ}$. 
By applying the snake lemma to the limit diagram, we deduce (unconditionally)
that for $r\geq m$, 
\[
\frac{\Gamma H^{2r-m}(\CC(X),\QQ(r))}{{\IM}(\lim\cl_{r,m})}  = 0
\]
implies that 
\[
\ul{AJ}_{\QQ} : \frac{\CH^r(X,m-1;\QQ)}{N^1\CH^r(X,m-1;\QQ)} \to
J\biggl(\frac{H^{2r-m}(X,\QQ(r))}{N^1H^{2r-m}(X,m-1;\QQ)}\biggr),
\]
is injective, which in turn implies by a
generalization of Beilinson rigidity theorem given in \cite{MS}
that
\[
 \frac{\CH^r(X,m-1;\QQ)}{N^1\CH^r(X,m-1;\QQ)}
 \]
is countable for $m\geq 2$. Note that in the case $r=m=2$, we have
\[
\CH^2_{\ind}(X,1;\QQ) =  \frac{\CH^2(X,1;\QQ)}{N^1\CH^2(X,1;\QQ)},
\]
where $\CH^2_{\ind}(X,1;\QQ)$ was defined in
Example \ref{EX01},
and the statement of countability of $\CH^2_{\ind}(X,1;\QQ)$ is
a conjecture of Voisin.
\end{rem}

\section{\ Integral coefficients II}\label{SEC010}

As always, $ X/\CC $ is a smooth projective variety.
This section concerns the following integrally defined  map:

\begin{equation}\label{E001}
\begin{aligned}
d\log_m : \CH^{m}(\Spec(\CC(X)),m) &\to \Gamma\big(H^{m}(\CC(X),\ZZ(m))\big),
\\
\{f_1,...,f_m\} &\mapsto \bigwedge_{1}^md\log(f_j)
\end{aligned}
\end{equation}
mentioned in Section~\ref{SEC01}.
Of course, $d\log_m \otimes \QQ = \lim(\cl_{m,m})$.
We shall prove in this section that $H^{m}(\CC(X),\ZZ(m))$ is torsion-free,
so that by a weight argument
\[
 \Gamma\big(H^{m}(\CC(X),\ZZ(m))\big)
=
 H^m(\CC(X),\ZZ(m)) \cap F^mH^m(\CC(X),\CC).
\]
Clearly, if $ d\log_m $ is surjective then so is 
$ \lim(\cl_{m,m})$, but we shall show that the converse also
holds.
In fact, we expect the following to be true:

\begin{conj}\label{C0001}
The map in (\ref{E001}) is surjective.
\end{conj}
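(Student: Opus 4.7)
The plan is to reduce Conjecture~\ref{C0001} to the rational surjectivity of $\lim(\cl_{m,m})$, and then to attack that via the framework of Section~\ref{SEC04} in the limit over Zariski opens. Theorem~\ref{T534} gives $\coker(d\log_m) \simeq \coker(\lim(\cl_{m,m}))$, so it suffices to prove surjectivity of $\lim(\cl_{m,m})$; the torsion-freeness of $H^m(\CC(X),\ZZ(m))$ established elsewhere in this section ensures that the integral target is controlled enough to make this reduction clean.

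For the rational surjectivity, Corollary~\ref{C56} tells us that for a fixed $U=X\bs Y$, $\cl_{m,m}$ is surjective iff $d_k(\ker(\a_k)) = 0$ for $k=2,\ldots,m$, where the $d_k$ are the spectral sequence differentials of Section~\ref{SEC03} and the $\a_k$ are the reduced Abel-Jacobi maps from~\eqref{E003}. Passing to the limit over Zariski opens, the task is to show that for every $k\geq 2$ and every class $\xi \in \ker(\a_k)$ at some level $U$, there exists a larger $Y$ for which $d_k(\xi)$ becomes decomposable (hence trivial modulo the relevant Gysin images). The guiding intuition is that a class in $\ker(\a_k)$ is supported on a boundary stratum and has vanishing higher Abel-Jacobi invariant, so after enlarging $Y$ it should become expressible via symbols supported on a divisor.

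A complementary route is through Bloch-Kato: its field case, now a theorem, identifies $\CH^m(\Spec(\CC(X)),m)/l$ with $H^m_{\et}(\Spec(\CC(X)),\mu_l^{\otimes m})$ for all nonzero integers $l$. Together with the comparison between \'etale and Betti cohomology and the explicit $d\log$ description common to both isomorphisms, this should yield surjectivity of $d\log_m$ modulo every $l$. Combining this residual information with the rational statement and exploiting the torsion-freeness of $\Gamma(H^m(\CC(X),\ZZ(m)))$ may then allow a bootstrap from ``surjective rationally and modulo every $l$'' to full integral surjectivity.

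The main obstacle is the rational input itself. Even conditionally on the Hodge and Bloch-Beilinson conjectures, Proposition~\ref{P001} yields surjectivity only for varieties defined over $\ol{\QQ}$; for transcendental $X$ one will almost certainly need a spreading-out argument, writing $X$ as a geometric generic fiber of a family over an arithmetic base and specializing. Controlling the interaction of the filtration $N^1\CH^m(X,m-1;\QQ)$ of~\eqref{E88} with this limit process (cf.\ Corollary~\ref{C116}) is the genuinely hard point, and any serious attack on Conjecture~\ref{C0001} must confront this circle of deep questions about cycles and their Abel-Jacobi images.
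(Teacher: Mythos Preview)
The statement you are attempting to prove is labelled a \emph{Conjecture} in the paper, not a theorem, and the paper offers no proof of it. What the paper does establish is the reduction you invoke in your first paragraph: Theorem~\ref{T534} gives $\coker(d\log_m)\simeq\coker(\lim(\cl_{m,m}))$, so Conjecture~\ref{C0001} is equivalent to the surjectivity of $\lim(\cl_{m,m})$. But that surjectivity is precisely the $r=m$ case of Conjecture~\ref{CO534}, which the paper also leaves open. There is thus no ``paper's own proof'' to compare against; the paper's position is that this is an open problem equivalent to another open problem.

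Your Bloch-Kato paragraph does not add independent leverage. The paper already extracts from Bloch-Kato exactly the statement that $\coker(d\log_m)$ is uniquely divisible (Theorem~\ref{T0001}(iii)); this is how Theorem~\ref{T534} is proved. So ``surjective modulo every $l$'' is already folded into the integral-to-rational reduction and cannot by itself supply the missing rational input. The bootstrap you describe collapses to: rational surjectivity $\Rightarrow$ integral surjectivity, which is just Theorem~\ref{T534} again.

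You correctly identify the genuine gap in your final paragraph: the rational surjectivity of $\lim(\cl_{m,m})$ is not known unconditionally, and even the conditional argument of Proposition~\ref{P001} requires both the Hodge and Bloch--Beilinson conjectures and is restricted to varieties defined over $\ol{\QQ}$. A spreading-out argument of the kind you sketch would still need, at the special fibres, injectivity statements for Abel-Jacobi maps that are themselves conjectural. So your proposal is not a proof but an accurate summary of why Conjecture~\ref{C0001} is equivalent to a known open conjecture---which matches the paper's own stance.
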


For the moment,
let us restrict to the case $r=m=1$. It is then easy (and also follows from Example \ref{EX1})
that
 \[
 d\log : \CH^1(\CC(X),1) = \CC(X)^{\times} \to
  \Gamma\big(H^{1}(\CC(X),\ZZ(1))\big),
  \]
is surjective, with divisible kernel: $\ker ( d\log) = \CC^{\times}$.
Thus for any integer $l\ne 0$,
\begin{equation*}
\frac{\CH^1(\CC(X),1)}{l\cdot \CH^1(\CC(X),1)} \simeq 
\frac{ \Gamma\big(H^{1}(\CC(X),\ZZ(1))\big)}{l\cdot  \Gamma\big(H^{1}(\CC(X),\ZZ(1))\big)}
\,.
\end{equation*}
If $ U $ is a Zariski open part of $ X $, then
there is an exact sequence
\begin{equation}\label{E111}
0\to\frac{H^{1}(U,\ZZ(1))}{\Gamma\big(H^{1}(U,\ZZ(1))\big)}
\to \frac{H^{1}(U,\CC)}{F^{1}H^{1}(U,\CC)}
\to H_{\Dd}^{2}(U,\ZZ(1))
\,,
\end{equation}
where
\begin{equation}\label{E113}
H^{2}_{\Dd}(U,\ZZ(1)) = \CH^{1}(U)
\,.
\end{equation}
Indeed, from mixed Hodge theory (see \cite{De}(Cor. 3.2.13(ii)),
 the restriction map induces an isomorphism
\begin{equation*}
 H^{1}({X},{\CO}_{X})
\simeq
 \frac{H^{1}(U,\CC)}{F^{1}H^{1}(U,\CC)}
\,.
\end{equation*}
This together with the surjectivity of $\CH^1(X)\to \CH^1(U)$, 
implies that the restriction map $H_{\Dd}^2(X,\ZZ(1)) \to H_{\Dd}^2(U,\ZZ(1))$
is surjective; moreover by a weak purity argument, $H_{\Dd,X\bs U}^2(X,\ZZ(1)) \simeq
H^2_{X\bs U}(X,\ZZ(1))$. Via the Deligne cycle class maps,
we have identifications, $\CH^1(X) = H_{\Dd}^2(X,\ZZ(1))$,
$\CH^1_{X\bs U}(X) = H^2_{X\bs U}(X,\ZZ(1))$, and hence by a localization sequence
argument, the aforementioned identification $H^{2}_{\Dd}(U,\ZZ(1)) = \CH^{1}(U)$.
\comment
Now assume given a ``good compactification'' pair $(X,U)$, with NCD $Y := X\bs U$.
There is an exact sequence:
\begin{equation}\label{E111}
0\to\frac{H^{1}(U,\ZZ(1))}{\Gamma\big(H^{1}(U,\ZZ(1))\big)}
\to \frac{H^{1}(U,\CC)}{F^{1}H^{1}(U,\CC)}
\to H_{\Dd}^{2}(U,\ZZ(1)).
\end{equation}
Further,
\begin{equation*}
 H^{1}({X},{\CO}_{X})
\simeq
 \HH^{1}(\Omega^{\bullet < 1}_{{X}}\langle Y\rangle)
\simeq
 \frac{H^{1}(U,\CC)}{F^{1}H^{1}(U,\CC)}
\,,
\end{equation*}
and
\begin{equation}\label{E113}
H_{\Dd}^2(U,\ZZ(1)) = \CH^{1}(U).
\end{equation}
\endcomment
Next, by shrinking $U$, and using that $\CH^1(\Spec(\CC(X)) = 0$,
we obtain from~\eqref{E111} and~\eqref{E113} a short exact sequence
\begin{equation*}
 0 \to \Gamma H^1(\CC(X),\ZZ(1)) \to H^1(\CC(X),\ZZ(1)) \to  H^1(X,\CO_{X}) \to 0 
\,,
\end{equation*}
where the last term is uniquely divisible.  Hence, for $ l\ne0 $,
we find an isomorphism
\begin{equation*}
\frac{\Gamma H^{1}(\CC(X),\ZZ(1))}{l\cdot \Gamma H^{1}(\CC(X),\ZZ(1))}
\simeq
\frac{H^{1}(\CC(X),\ZZ(1))}{l\cdot H^{1}(\CC(X),\ZZ(1))}.
\end{equation*}

Next, we observe that $ H^2(\CC(X),\ZZ(1)) $ is torsion-free,
since by the Lefschetz $(1,1)$ theorem
the torsion in $H^2({X},\ZZ(1))$ is algebraic.
This implies that
\[
\frac{H^{1}(\CC(X),\ZZ(1))}{l\cdot H^{1}(\CC(X),\ZZ(1))} \simeq 
H^1\biggl(\CC(X),\frac{\ZZ(1))}{l\cdot \ZZ(1)}\biggr).
\]
Finally, we deduce the well-known fact
\[
\frac{\CH^1(\CC(X),1)}{l\cdot \CH^1(\CC(X),1)} \simeq
H^1\biggl(\CC(X),\frac{\ZZ(1))}{l\cdot \ZZ(1)}\biggr).
\]
To see this in another context, let us work in the \'etale topology on a 
variety $V/\CC$,
and consider the sheaf $\mu_l$ on $V$, where
for $U \to V$ \'etale, $\mu_l(U) = \{\xi\in
\Gamma(U,\CO_U)\ |\ \xi^l = 1\}$. 
Now let $V = \Spec(\CC(X))$. Then by Hilbert 90,
\[
H_{\et}^1(\CC(X),\mu_l)   \simeq \CC(X)^{\times}/[\CC(X)^{\times}]^l,
\]
where $[\CC(X)^{\times}]^l = \{x^l\ |\ x\in \CC(X)^{\times}\}$.
There are exact sequences
\[
0 \to \CC^{\times}\to \CC(X)^{\times} \xrightarrow{d\log} \Omega^1_{\CC(X)/\CC},
\]
\[
0 \to \CC^{\times}\to [\CC(X)^{\times}]^l \xrightarrow{d\log} \Omega^1_{\CC(X)/\CC},
\]
where $ \Omega^1_{\CC(X)/\CC}$ are the K\"ahler differentials, which induces the short
exact sequences
\[
0 \to \CC^{\times}\to \CC(X)^{\times} \xrightarrow{d\log} \Gamma\big(
H^1(\CC(X),\ZZ(1))\big)\to 0
\]
and
\[
0 \to \CC^{\times}\to [\CC(X)^{\times}]^l \xrightarrow{d\log} l\cdot\Gamma\big(
H^1(\CC(X),\ZZ(1))\big)\to 0
\,.
\]
Thus going from the \'etale to Betti cohomology with finite coefficients
can be traced via the isomorphisms:
\begin{equation}\label{E36}
H_{\et}^1(\CC(X),\mu_l) \simeq \CC(X)^\times / [\CC(X)^{\times}]^l
\end{equation}
\[
\simeq
\frac{\Gamma\big(H^1(\CC(X),\ZZ(1))\big)}{l\cdot \Gamma\big(H^1(\CC(X),\ZZ(1))\big)}
\simeq \frac{H^1(\CC(X),\ZZ(1))}{l\cdot H^1(\CC(X),\ZZ(1))}
\]
\[
 \simeq
H^1\biggl(\CC(X),\frac{\ZZ(1)}{l\cdot \ZZ(1)}\biggr).
\]
Note that $d\log : \CH^1(\Spec(\CC(X),1)= \CC(X)^{\times} \to H^1(\CC(X),\ZZ(1))$,
and $\CH^1(\Spec(\CC(X),1)= \CC(X)^{\times} \to H_{\et}^1(\CC(X),\mu_l)$ from (\ref{E36})
are the respective cycle class maps to Betti cohomology (analytic topology) and
\'etale cohomology. 
In summary, we have a commutative diagram corresponding to a morphism
of sites from the \'etale to the analytic topologies.
\[
\begin{matrix}
\CC(X)^{\times}&\xrightarrow{d\log}&H^1(\CC(X),\ZZ(1))\\
&\\
\biggl\downarrow&&\biggr\downarrow\\
&\\
H_{\et}^1(\CC(X),\mu_l)&\xrightarrow{\sim}&
H^1\big(\CC(X),\frac{\ZZ(1)}{l\cdot \ZZ(1)}\big)
\end{matrix}
\]
where the isomorphism in the bottom row is from~\eqref{E36}.
Taking cup products, we have a similar diagram
\[
\begin{matrix}
\big(\CC(X)^{\times}\big)^{\otimes m}&\xrightarrow{(d\log)^{\otimes m}}&H^1(\CC(X),\ZZ(1))^{\otimes m}\\
&\\
\biggl\downarrow&&\biggr\downarrow\\
&\\
H_{\et}^1(\CC(X),\mu_l)^{\otimes m}&\xrightarrow{\sim}&
H^1\big(\CC(X),\frac{\ZZ(1)}{l\cdot \ZZ(1)}\big)^{\otimes m}\\
&\\
\cup\biggl\downarrow\quad&&\quad\biggr\downarrow\cup\\
&\\
H_{\et}^m(\CC(X),\mu_l^{\otimes m})&\xrightarrow{\sim}&
H^m\big(\CC(X),\frac{\ZZ(m)}{l\cdot \ZZ(m)}\big)^{\otimes m}
\end{matrix}
\]
where the isomorphism
\[
H^m_{\et}(\CC(X),\mu_l^{\otimes_{\ZZ}m})\xrightarrow{\sim}
H^m\biggl(\CC(X),\frac{\ZZ(m)}{l\cdot \ZZ(m)}\biggr)
\]
here (and in the previous diagram for $ m=1 $),
which really arises from the Leray spectral sequence associated
to a morphism of sites, 
can be deduced from \cite[Thm 3.12 on p.117]{Mi}. The Bloch-Kato/Milnor conjectures
(now theorems \cite{W}) tell us that for $l$ a non-zero integer,
the induced map
\[
\frac{\CH^m(\Spec(\CC(X)),m)}{l\cdot \CH^m(\Spec(\CC(X)),m)}
 \to H^m_{\et}(\Spec(\CC(X)),\mu_l^{\otimes_{\ZZ}m})
\]
is an isomorphism.
\footnote{This generalizes the Merkurjev-Suslin theorem, where $m=2$.}
In our situation, this translates to saying

\begin{theorem}\label{BK}
For $ m \ge 0 $, the  map
\begin{equation*}
\frac{\CH^{m}(\Spec(\CC(X)),m)}{l\cdot CH^{m}(\Spec(\CC(X)),m)} \to 
H^{m}\biggl(\CC(X),\frac{\ZZ(m)}{l\cdot \ZZ(m)}\biggr)
\end{equation*}
is an isomorphism for any integer $l\ne 0$.
\end{theorem}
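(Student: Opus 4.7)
The plan is to deduce Theorem \ref{BK} by stringing together three known ingredients: (i) the Nesterenko--Suslin/Totaro identification of the top higher Chow group of a field with Milnor $K$-theory, (ii) the Bloch--Kato/Milnor conjecture (now Voevodsky--Rost--Weibel's theorem), and (iii) the comparison isomorphism between étale and Betti cohomology with finite coefficients that has just been constructed in the preceding discussion.

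First, I would invoke the Nesterenko--Suslin--Totaro theorem, which says that for any field $F$ of characteristic zero there is a natural isomorphism
\[
\CH^{m}(\Spec(F),m) \simeq K_m^M(F),
\]
compatible with the symbol $\{f_1,\dots,f_m\} \mapsto \{f_1,\dots,f_m\}$. Applied to $F=\CC(X)$, this rewrites the left-hand side of the theorem as $K_m^M(\CC(X))/l$. Next I would invoke the Bloch--Kato conjecture (Voevodsky's theorem, cited as \cite{W}) applied to the field $\CC(X)$, giving a natural isomorphism
\[
K_m^M(\CC(X))/l \xrightarrow{\sim} H_{\et}^{m}(\Spec(\CC(X)),\mu_l^{\otimes_{\ZZ} m}),
\]
where $l$ is invertible since $\CC(X)$ has characteristic zero. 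Under these identifications, the composite with the $d\log$ symbol map agrees with the Galois symbol, so the cycle class map goes over to the norm-residue map.

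Finally, I would feed into this the comparison isomorphism
\[
H^m_{\et}(\Spec(\CC(X)),\mu_l^{\otimes_{\ZZ}m}) \xrightarrow{\sim} H^m\biggl(\CC(X),\frac{\ZZ(m)}{l\cdot \ZZ(m)}\biggr)
\]
constructed in the commutative diagram preceding the theorem via the Leray spectral sequence for the morphism of sites (\cite[Thm 3.12 p.117]{Mi}). Composing the three isomorphisms yields the desired map of the theorem, and compatibility with the cycle class map is built into the diagram, since both sides agree after composing with $d\log$ on cup products of $\CC(X)^\times$.

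The genuine obstacle here is the Bloch--Kato conjecture itself, which is of course the deep input of Voevodsky--Rost. Within this paper all the work lies in checking that the four flavours of cohomology (higher Chow, Milnor $K$, étale with $\mu_l^{\otimes m}$, Betti with $\ZZ(m)/l$) are compatibly connected by the $d\log$/Galois symbol; the preceding diagrams have already done this at the level $m=1$ and for cup products, so no further calculation is needed beyond citing \cite{W}. Thus the proof reduces to assembling these citations in the correct order.
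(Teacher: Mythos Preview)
Your proposal is correct and matches the paper's approach: the paper also simply cites Bloch--Kato (\cite{W}) for the isomorphism to \'etale cohomology and then invokes the \'etale--Betti comparison from \cite{Mi} established in the preceding diagram. The only cosmetic difference is that the paper formulates Bloch--Kato directly as an isomorphism $\CH^m(\Spec(\CC(X)),m)/l \xrightarrow{\sim} H^m_{\et}(\Spec(\CC(X)),\mu_l^{\otimes m})$ without explicitly naming the Nesterenko--Suslin/Totaro identification with $K_m^M$, whereas you make that intermediate step visible.
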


We can now prove the following result.  Note that part~(i)
for $ i=1 $ is immediate from the short exact sequence~\eqref{expsequence},
and for $i=2$ follows from the Lefschetz $(1,1)$ theorem.

\begin{theorem}\label{T0001}
{\rm (i)} $H^i(\CC(X),\ZZ) $ is torsion-free for all $ i $. In particular,
the torsion subgroup of $H^i(X,\ZZ)$ is supported in codimension $1$.

\bigskip
{\rm (ii)} $\ker(d\log_m)$ in (\ref{E001}) is divisible.

\bigskip
{\rm (iii)} The groups
\[
\frac{H^{m}(\CC(X),\ZZ(m))}{\Gamma\big(H^{m}(\CC(X),\ZZ(m)\big)}
\quad\textup{\it and }\quad
\frac{\Gamma\big(H^m(\CC(X),\ZZ(m)\big)}{\IM(d\log_m)}
\]
are uniquely divisible.
\end{theorem}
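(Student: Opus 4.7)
The plan is to deduce all three parts from the Bloch--Kato isomorphism (Theorem~\ref{BK}) together with the Bockstein sequences associated to $0\to\ZZ\xrightarrow{l}\ZZ\to\ZZ/l\to 0$, treating the parts in order so that each feeds into the next. For~(i) I would induct on~$i$, the cases $i\le 2$ being as already noted. The Bockstein sequence exhibits $H^i(\CC(X),\ZZ)[l]$ as the cokernel of the always-injective arrow $H^{i-1}(\CC(X),\ZZ)/l\hookrightarrow H^{i-1}(\CC(X),\ZZ/l)$; since a Tate twist is a group isomorphism on cohomology, and since the cycle class map factors through $H^{i-1}(\CC(X),\ZZ(i-1))$, we get a factorization
\[
\CH^{i-1}(\Spec\CC(X),i-1)/l\twoheadrightarrow H^{i-1}(\CC(X),\ZZ(i-1))/l\hookrightarrow H^{i-1}(\CC(X),\ZZ(i-1)/l)
\]
whose composite is the Bloch--Kato isomorphism. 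Thus both intermediate arrows are isomorphisms, the cokernel vanishes, and $H^i(\CC(X),\ZZ)[l]=0$ for all $l$. The codimension-one statement then follows from $H^i(\CC(X),\ZZ)=\lim_U H^i(U,\ZZ)$: any torsion class in $H^i(X,\ZZ)$ dies in some $H^i(U,\ZZ)$, and after replacing $U$ by the complement of a divisor $D\supseteq X\bs U$ the localization sequence places the class in the image of $H^i_D(X,\ZZ)$.

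For~(ii), part~(i) ensures that $\IM(d\log_m)\subseteq H^m(\CC(X),\ZZ(m))$ is torsion-free, so the snake lemma applied to $0\to\ker(d\log_m)\to\CH^m(\Spec\CC(X),m)\to\IM(d\log_m)\to 0$ with multiplication by~$l$ degenerates to
\[
0\to\ker(d\log_m)/l\to\CH^m(\Spec\CC(X),m)/l\to\IM(d\log_m)/l\to 0.
\]
The composite
\[
\CH^m/l\twoheadrightarrow\IM(d\log_m)/l\hookrightarrow H^m(\CC(X),\ZZ(m))/l\hookrightarrow H^m(\CC(X),\ZZ(m)/l)
\]
equals the Bloch--Kato isomorphism, so every factor in it is an isomorphism; in particular $\CH^m/l\xrightarrow{\sim}\IM(d\log_m)/l$, whence $\ker(d\log_m)/l=0$ for every~$l$, and $\ker(d\log_m)$ is divisible.

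For~(iii), two saturation statements do the work. First, $\Gamma\big(H^m(\CC(X),\ZZ(m))\big)$ is saturated in $H^m(\CC(X),\ZZ(m))$: if $h\in H^m$ satisfies $lh\in\Gamma$, then because $W_0$ is a $\QQ$-subspace of $H^m(\CC(X),\QQ(m))$ and $F^0$ is a $\CC$-subspace of $H^m(\CC(X),\CC)$, the image of $h$ already lies in $W_0\cap F^0$, so $h\in\Gamma$. This yields torsion-freeness of $H^m/\Gamma$ and the injection $\Gamma/l\hookrightarrow H^m/l$. Second, by~(ii) the composition $\IM(d\log_m)/l\to\Gamma/l\to H^m(\CC(X),\ZZ(m))/l$ is an isomorphism, and since the second arrow is injective, both arrows must be isomorphisms. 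The injectivity of the first translates into $\IM(d\log_m)$ being saturated in $\Gamma$ (hence $\Gamma/\IM(d\log_m)$ is torsion-free), while the surjectivities give $(\Gamma/\IM(d\log_m))/l=(H^m/\Gamma)/l=0$ for every~$l$. Both quotients are therefore divisible and torsion-free, hence uniquely divisible.

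The main obstacle is bookkeeping rather than any substantive difficulty: each ingredient (Bockstein, the factorization of the cycle class map through $\Gamma$, Bloch--Kato) is standard in isolation, but the interlocking commutativities and the compatibility of cycle class maps with coefficient reductions---including the identification $\ZZ(m)/l\cong\ZZ/l$ at the level of abelian groups---must be tracked carefully so that the Bloch--Kato isomorphism really sits as the displayed total composite in each case. Once that is in place, the arguments for (ii) and (iii) are pleasant diagram chases powered by~(i).
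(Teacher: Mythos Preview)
Your proof is correct and follows essentially the same route as the paper's: the Bockstein sequence together with the Bloch--Kato isomorphism for part~(i), the snake lemma applied to $0\to\ker(d\log_m)\to\CH^m\to\IM(d\log_m)\to 0$ for~(ii), and the saturation of $\Gamma$ in $H^m$ (equivalently, $\Gamma\cap l\cdot H^m=l\cdot\Gamma$) together with the isomorphisms already obtained for~(iii). One minor remark: you frame~(i) as an induction on~$i$ with base cases $i\le 2$, but your argument never actually invokes the inductive hypothesis---the Bloch--Kato isomorphism in degree $m=i-1$ directly forces $H^i(\CC(X),\ZZ)[l]=0$ for every $i\ge 1$, so no induction is needed (and the paper does not use one).
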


\begin{proof} First observe that the map in Theorem~\ref{BK} is 
the composition
\[
\frac{\CH^{m}(\Spec(\CC(X)),m)}{l} \to
\frac{H^{m}(\CC(X),\ZZ(m))}{l} \to 
 H^{m}\biggl(\CC(X),\frac{\ZZ(m)}{l\cdot\ZZ(m)}\biggr).
\]
Notice that the short exact sequence
$ 0 \to \ZZ \xrightarrow{\times l} \ZZ \to \ZZ/l \ZZ\to 0$
induces the short exact sequence
$$
\frac{H^m(\CC(X),\ZZ(m))}{l}\hookrightarrow
H^m\biggl(\CC(X),\frac{\ZZ(m)}{l\cdot\ZZ(m)}\biggr) \twoheadrightarrow 
H^{m+1}(\CC(X),\ZZ(m))_{l-{\rm tor}}.
$$
By Theorem \ref{BK}, it follows that $H^{m+1}(\CC(X),\ZZ(m))_{l-{\rm tor}} = 0$,
thus proving part~(i). Next  observe that
 \[
 \Gamma \big(H^m(\CC(X),\ZZ(m)\big) = F^mH^m(\CC(X),\CC) \cap H^m(\CC(X),\ZZ(m))
\,,
  \]
  and hence
 \[
 \Gamma \big(H^m(\CC(X),\ZZ(m)\big) \cap l\cdot H^m(\CC(X),\ZZ(m)) = l\cdot
 \Gamma \big(H^m(\CC(X),\ZZ(m)\big).
 \]
Using also Theorem~\ref{BK}, we have the commutative diagram
\[
\xymatrix{
 \dfrac{\CH^m(\Spec(\CC(X)),m)}{l\cdot \CH^m(\Spec(\CC(X)),m)} \ar@{->>}[d] \ar[r]^-\simeq &
      H^m\biggl(\CC(X),\dfrac{\ZZ(m)}{l\cdot \ZZ(m)}\biggr)
\\
 \dfrac{\IM(d\log_m)}{l\cdot\IM(d\log_m)}\ar[d] & 
\\
 \dfrac{\Gamma\big(H^m(\CC(X),\ZZ(m))\big)}{l\cdot \Gamma\big(H^m(\CC(X),\ZZ(m))\big)}
 \ar@{^{(}->}[r] & \dfrac{H^m(\CC(X),\ZZ(m))}{l\cdot H^m(\CC(X),\ZZ(m))} \ar[uu]_-\simeq
\,,
}
 \]
where all maps must be isomorphisms.
Part~(ii) follows by applying the snake lemma
to multiplication by $ l \ne 0 $ on the short exact sequence
\begin{equation*}
 0 \to  \ker(d \log_m) \to \CH^m(\Spec(\CC(X)),m) \to \IM(d \log_m) \to 0 
\end{equation*}
as $ \IM(d \log_m) $ is torsion-free.  Using the obvious abbreviations,
part~(iii) follows similarly from
$ 0 \to \Gamma \to H^m \to H^m/ \Gamma \to 0  $
and
$  0 \to \IM \to \Gamma \to  \Gamma/ \IM \to 0   $.
\end{proof}

\begin{cor}
\[
\frac{\Gamma\big(H^m(\CC(X),\ZZ(m))\big)}{\IM(d\log_m)} =
\frac{\Gamma\big(H^m(\CC(X),\QQ(m))\big)}{\IM(d\log_m\otimes\QQ)}.
\]
\end{cor}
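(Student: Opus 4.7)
The strategy is to use Theorem~\ref{T0001}(iii), which says that the left-hand side
$$
G := \frac{\Gamma\big(H^m(\CC(X),\ZZ(m))\big)}{\IM(d\log_m)}
$$
is uniquely divisible, hence a $\QQ$-vector space, hence canonically isomorphic to $G\otimes\QQ$. It therefore suffices to produce a natural isomorphism $G\otimes\QQ \simeq \Gamma\big(H^m(\CC(X),\QQ(m))\big) / \IM(d\log_m\otimes\QQ)$ and note that the composition matches the inclusion-induced map between the two quotients.

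The plan is to tensor the defining short exact sequence
$$
0 \to \IM(d\log_m) \to \Gamma\big(H^m(\CC(X),\ZZ(m))\big) \to G \to 0
$$
with $\QQ$, using flatness of $\QQ$, and then identify each of the first two terms with its rational counterpart. For the first, right-exactness of $\otimes\QQ$ yields $\IM(d\log_m)\otimes\QQ = \IM(d\log_m\otimes\QQ)$ inside $\Gamma\big(H^m(\CC(X),\ZZ(m))\big)\otimes\QQ$, since the latter embeds in $\Gamma\big(H^m(\CC(X),\QQ(m))\big)$ (both $\Gamma\big(H^m(\CC(X),\ZZ(m))\big)$ and $G$ being torsion-free, the second by Theorem~\ref{T0001}(iii) and the first by Theorem~\ref{T0001}(i)). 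For the second, I would verify the identification
$$
\Gamma\big(H^m(\CC(X),\ZZ(m))\big)\otimes \QQ \;=\; \Gamma\big(H^m(\CC(X),\QQ(m))\big)
$$
as subgroups of $H^m(\CC(X),\QQ(m)) = H^m(\CC(X),\ZZ(m))\otimes\QQ$ (the latter equality holds because cohomology with rational coefficients is the tensor product of integral cohomology with $\QQ$, and $\otimes\QQ$ commutes with the filtered colimit over Zariski opens in $X$). By Theorem~\ref{T0001}(i) the integral group is torsion-free, so the equality reduces to the observation that $F^mH^m(\CC(X),\CC) \cap H^m(\CC(X),\QQ(m))$ consists exactly of elements of the form $(1/n)\eta'$ with $\eta' \in F^m\cap H^m(\CC(X),\ZZ(m))$: given any such rational Hodge class $\eta$, some integer multiple $n\eta$ lies in the integral cohomology, and because $n\eta$ still lies in $F^m$, it belongs to the integral $\Gamma$.

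Combining these identifications, the tensored exact sequence becomes
$$
0 \to \IM(d\log_m\otimes\QQ) \to \Gamma\big(H^m(\CC(X),\QQ(m))\big) \to G\otimes\QQ \to 0,
$$
giving the desired identification on the right. Together with $G = G\otimes\QQ$ (from unique divisibility), this proves the corollary. None of the steps present a genuine obstacle; the only point requiring care is checking that the natural comparison map really is the one obtained by passing to quotients under the inclusions $\IM(d\log_m) \hookrightarrow \IM(d\log_m\otimes\QQ)$ and $\Gamma_\ZZ \hookrightarrow \Gamma_\QQ$, which is immediate from the construction.
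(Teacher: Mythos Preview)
Your proof is correct and is exactly the intended argument: the paper leaves this corollary without proof because it is immediate from Theorem~\ref{T0001}(iii), and you have simply spelled out the details (unique divisibility of the quotient, together with the identifications $\Gamma_{\ZZ}\otimes\QQ = \Gamma_{\QQ}$ and $\IM(d\log_m)\otimes\QQ = \IM(d\log_m\otimes\QQ)$) that make it so.
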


\begin{cor}\label{C99}
\[
\ker \biggl[\ul{AJ} : \frac{\CH^2_{\hom}(X,1)}{\CH^2_{\dec}(X,1)} \to
 J\biggl(\frac{H^2(X,\ZZ(2))}{H^2_{\alg}(X,\ZZ(2))}\biggr)\biggr]
 \]
 is uniquely divisible. 
\end{cor}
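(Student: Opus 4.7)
The plan is to combine two results already established in the excerpt: Corollary~\ref{C100}, which identifies the kernel in question with a cokernel, and Theorem~\ref{T0001}(iii), which tells us that cokernel is uniquely divisible.

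More precisely, I would first recall Corollary~\ref{C100}, which provides a canonical isomorphism
\[
\frac{\Gamma H^2(\CC(X),\ZZ(2))}{\IM(d\log_2)} \xrightarrow{\sim}
\ker\biggl[\ul{AJ} : \frac{\CH_{\hom}^2(X,1)}{\CH^2_{\dec}(X,1)} \to J\biggl(
\frac{H^2(X,\ZZ(2))}{H^2_{\alg}(X,\ZZ(2))}\biggr)\biggr].
\]
Thus it suffices to show that the left-hand side is uniquely divisible. But this is exactly the case $m=2$ of the second statement of Theorem~\ref{T0001}(iii), which asserts that the group
\[
\frac{\Gamma\big(H^m(\CC(X),\ZZ(m))\big)}{\IM(d\log_m)}
\]
is uniquely divisible for all $m\ge0$.

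There is no real obstacle: the work has already been done. The heavy lifting is in Theorem~\ref{T0001}, which relies on the Bloch-Kato theorem (Theorem~\ref{BK}) to produce the crucial isomorphism
\[
\frac{\CH^m(\Spec(\CC(X)),m)}{l\cdot \CH^m(\Spec(\CC(X)),m)} \xrightarrow{\sim} H^m\biggl(\CC(X),\frac{\ZZ(m)}{l\cdot\ZZ(m)}\biggr),
\]
together with the torsion-freeness of $H^{m+1}(\CC(X),\ZZ(m))$, both of which are used in the snake-lemma argument that gives unique divisibility. Here we merely transport that conclusion across the isomorphism of Corollary~\ref{C100}.
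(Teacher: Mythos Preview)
Your proposal is correct and follows exactly the same route as the paper: apply Theorem~\ref{T0001}(iii) with $m=2$ to the identification given by Corollary~\ref{C100}.
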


\begin{proof}
Apply Theorem~\ref{T0001}(iii) with $m=2$ to Corollary~\ref{C100}.
\end{proof}
 
Note that one can define $N^1\CH^r(X,m-1)$ with integral coefficients
analogous to~\eqref{E88}.
We assume  $ r\ge m$.  Since $H^{2r-m}(\CC(X),\ZZ(r))$ is
 torsion-free by Theorem \ref{T0001}(i),  diagram (\ref{E27}) becomes valid
 after passing to the generic point of $X$. After applying the snake lemma, we
 arrive at the fact that 
 \[
 \ker \biggl( \ul{AJ}: \frac{\CH^r(X,m-1)}{N^1\CH^r(X,m-1)}\to
 J\biggl(\frac{H^{2r-m}(X,\ZZ(m-1))}{N^1H^{2r-m}(X,\ZZ(m-1))}\biggr)\biggr)
 \]
 injects into
 \[
 \frac{\Gamma\big(H^{2r-m}(\CC(X),\ZZ(r))\big)}{\IM (\lim(\cl_{r,m}))}
\,.
 \]

 In the case $r=m$ we deduce
 \begin{cor}\label{C66}
 \[
 \ker\biggl( \ul{AJ}: \frac{\CH^m(X,m-1)}{N^1\CH^mX,m-1)}\to
 J\biggl(\frac{H^{m}(X,\ZZ(m-1))}{N^1H^{m}(X,\ZZ(m-1))}\biggr)\biggr)
 \]
 is torsion-free. Hence we have an injection of torsion subgroups
 \[
  \ul{AJ}: \biggl\{\frac{\CH^m(X,m-1)}{N^1\CH^mX,m-1)}\biggr\}_{\rm tor} \hookrightarrow
\biggl\{ J\biggl(\frac{H^{m}(X,\ZZ(m-1))}{N^1H^{m}(X,\ZZ(m-1))}\biggr)\biggr\}_{\rm tor}
\,.
\]
\end{cor}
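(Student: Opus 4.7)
The plan is to chain the injection statement displayed immediately before the corollary with Theorem~\ref{T0001}(iii). Specializing the preceding paragraph to $r=m$, we have $2r-m=m$, so the cokernel into which $\ker(\ul{AJ})$ is asserted to inject becomes
\[
\frac{\Gamma\big(H^{m}(\CC(X),\ZZ(m))\big)}{\IM(\lim(\cl_{m,m}))}
=
\frac{\Gamma\big(H^{m}(\CC(X),\ZZ(m))\big)}{\IM(d\log_m)},
\]
where the identification on the right uses that integrally $\lim(\cl_{m,m})$ is precisely the map $d\log_m$ of~\eqref{E001}.

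By Theorem~\ref{T0001}(iii) this cokernel is uniquely divisible. A uniquely divisible abelian group is in particular torsion-free, so $\ker(\ul{AJ})$ injects into a torsion-free group and is therefore itself torsion-free. This is the first assertion of the corollary.

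The second assertion — the injection on torsion subgroups — is a formal consequence of the first: any torsion class $\xi$ in $\CH^{m}(X,m-1)/N^{1}\CH^{m}(X,m-1)$ with $\ul{AJ}(\xi)=0$ lies in $\ker(\ul{AJ})$, which has no non-zero torsion, forcing $\xi=0$.

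There is no serious obstacle to overcome: the snake-lemma argument producing the integral injection $\ker(\ul{AJ}) \hookrightarrow \Gamma/\IM$ has already been carried out in the paragraph immediately preceding the corollary, and the unique divisibility of the target is provided by Theorem~\ref{T0001}(iii). The one point that deserves explicit mention is that diagram~\eqref{E27} must be valid integrally at the generic point of $X$; this is guaranteed precisely by the torsion-freeness of $H^{m}(\CC(X),\ZZ(m))$ from Theorem~\ref{T0001}(i), which is what makes the whole argument work integrally rather than only rationally.
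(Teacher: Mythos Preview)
Your proof is correct and follows essentially the same route as the paper: specialize the injection $\ker(\ul{AJ})\hookrightarrow \Gamma\big(H^{2r-m}(\CC(X),\ZZ(r))\big)/\IM(\lim\cl_{r,m})$ established just before the corollary to $r=m$, identify the target with $\Gamma\big(H^{m}(\CC(X),\ZZ(m))\big)/\IM(d\log_m)$, and invoke Theorem~\ref{T0001}(iii) for its unique divisibility. Your explicit remark that Theorem~\ref{T0001}(i) is what makes diagram~\eqref{E27} valid integrally at the generic point is exactly the point the paper makes in the preceding paragraph.
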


\begin{rem}
Conjecture~\ref{C0001} would imply that the maps $ \ul{AJ} $
in Corollaries~\ref{C99} and~\ref{C66} are injective.
\end{rem}

\section{\ Decomposables}\label{SEC07}

Fix $ r,m \ge 1 $.
If $ r=m $, then according to Theorem~\ref{T01},
surjectivity of $\cl_{m,m}$ implies that
$\lambda_{m}$ is injective on $d_{m}(\widetilde{E}_{m}^{-m,0}) \subset 
\widetilde{E}_{m}^{0,-m+1}$.
Thus it makes sense to calculate $\IM(d_{m})$
in general. 

Let $j : Y\hookrightarrow X$
be the inclusion of a NCD $Y$ (with smooth components).
Then
$$
\frac{\widetilde{E}_{m}^{0,-m+1}}{\IM(d_{m})} = 
E_\infty^{0,-m+1}\otimes \QQ
= \frac{\CH^{r}(X,m-1;\QQ)}{j_{\ast}\CH^{r-1}(Y,m-1;\QQ)}\subset 
\CH^{r}(U,m-1;\QQ)
\,.
$$
Note that $\CH^{r-1}(Y,m-1;\QQ)$ can be calculated 
from the simplicial complex $Y^{[\bullet]}\to Y$. So there are residue maps
\[
\del^{Y}_{R} : \CH^{r-1}(Y,m-1;\QQ) \to \frac{\CH^{r-m}(Y^{[m]};\QQ)}{\Gy
\big(\CH^{r-m-1}(Y^{[m+1]};\QQ)\big)},
\]
\[
\del^{X\bs Y}_{R} : \CH^{r}(X\bs Y,m;\QQ) \to \frac{\CH^{r-m}(Y^{[m]};\QQ)}{\Gy
\big(\CH^{r-m-1}(Y^{[m+1]};\QQ)\big)}.
\]

\begin{prop}\label{P004}
For $ m \ge 1 $, 
$$
\IM(d_{m}) = \frac{j_{\ast}\CH^{r-1}(Y,m-1;\QQ)}{j_{\ast}(\ker\del^{Y}_{R})}
\,.
$$
\end{prop}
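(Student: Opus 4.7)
The identification $\widetilde{E}_m^{0,-m+1}/\IM(d_m) = \CH^r(X,m-1;\QQ)/j_*\CH^{r-1}(Y,m-1;\QQ)$ given in the text preceding the proposition lets me reformulate the statement.  Letting $K_k \subseteq \CH^r(X,m-1;\QQ) = E_1^{0,-m+1}\otimes\QQ$ denote the preimage of zero in $E_k^{0,-m+1}\otimes\QQ$, we have $K_1 = 0$, $K_{m+1} = j_*\CH^{r-1}(Y,m-1;\QQ)$, and $\IM(d_m) = K_{m+1}/K_m$ inside $\widetilde{E}_m^{0,-m+1} = \CH^r(X,m-1;\QQ)/K_m$.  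So the proposition is equivalent to proving $K_m = j_*(\ker\partial_R^Y)$.

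To establish this, my plan is to decompose $\Tot^\bullet\Z(r)$ via the short exact sequence $0 \to C^\bullet \to \Tot^\bullet\Z(r) \to A^\bullet \to 0$, with $C^\bullet = z^r(X,\bullet)$ the subcomplex from the $i=0$ column and $A^\bullet$ the quotient, i.e.\ the total complex of $\{z^{r+i}(Y^{[-i]},-j)\}_{i\le -1}$.  A short computation identifies $A^\bullet$ with the double complex computing $\CH^{r-1}(Y,\bullet-1)$ from the simplicial scheme $\{Y^{[k]}\}_{k\ge 1}$; the induced long exact sequence is the localization sequence for $(X,Y,U)$, with connecting homomorphism $j_*\colon H^{-m}(A^\bullet) = \CH^{r-1}(Y,m-1) \to H^{-m+1}(C^\bullet) = \CH^r(X,m-1)$.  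The filtration of $A^\bullet$ by the $i$-index produces its own spectral sequence, whose top graded piece of $\CH^{r-1}(Y,m-1;\QQ)$ embeds in $\widetilde{E}_1^{-m,0}$ with projection $\partial_R^Y$; thus $\ker\partial_R^Y$ is the next filtration piece $W_{-1}^A$.

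The equality $K_m = j_*(W_{-1}^A)$ splits into two inclusions.  For $j_*(W_{-1}^A) \subseteq K_m$: given $\eta \in W_{-1}^A$, choose a cocycle $\bar a = (a_1,\dots,a_{m-1},0)$ representing $\eta$---possible because $\partial_R^Y(\eta)=0$ lets one kill the top component by an $A^\bullet$-coboundary.  Lifting $\bar a$ to $\Tot^\bullet$ with zero $C^\bullet$-part and applying the total differential yields $j_*(\eta) = [\Gy a_1]$, and the cocycle identities $\partial a_p \pm \Gy a_{p+1} = 0$ for $p<m-1$ together with $\partial a_{m-1}=0$ identify $[\Gy a_1]$ (up to sign) with $d_{m-1}([a_{m-1}])$, where $[a_{m-1}] \in E_{m-1}^{-m+1,-1}\otimes\QQ$.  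Hence $j_*(\eta)\in K_m$.  For the reverse $K_m \subseteq j_*(W_{-1}^A)$: an element of $K_m$ is a sum of images $d_k(\xi)$ for $\xi \in E_k^{-k,-m+k}\otimes\QQ$ and $k<m$, and the standard zig-zag description of $d_k(\xi)$ produces an $A^\bullet$-cocycle $(a_1,\dots,a_{k-1},\xi,0,\dots,0)$ whose class $\eta' \in W_{-1}^A$ (trivial top component) satisfies $j_*(\eta') \equiv \pm d_k(\xi)$ modulo $K_k$.  Together these give $K_m = j_*(\ker\partial_R^Y)$, and the proposition follows.

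The main obstacle I expect is the bookkeeping at the chain level, in particular tracking signs in the anticommutation $\partial\Gy + \Gy\partial = 0$ of the double complex, and verifying that the $A^\bullet$-coboundary modifications in the $W_{-1}^A$ direction and the zig-zag constructions in the reverse direction fit together consistently.  The overall structure is forced by the filtered complex $(\Tot^\bullet\Z(r),C^\bullet)$ and should pose no conceptual difficulty.
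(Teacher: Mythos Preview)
Your argument is correct and follows essentially the same route as the paper. Both proofs rest on comparing the weight spectral sequence for $X\setminus Y$ (the full double complex $\Z_0^{i,j}(r)$) with that for $Y$ (the columns $i\le -1$), and identifying $j_\ast$ with the connecting map. The paper compresses your two inclusions into a single step by using non-canonical $\QQ$-splittings $\CH^{r-1}(Y,m-1;\QQ)\simeq \widetilde{E}_{\infty,Y}^{-m,0}\oplus\ker\partial_R^Y$ (and similarly for $X\setminus Y$), then reading off the result from the exact row $0\to\widetilde{E}_{\infty,X\setminus Y}^{-m,0}\to\widetilde{E}_{\infty,Y}^{-m,0}\to\widetilde{E}_{m,X\setminus Y}^{0,-m+1}$ together with the surjection $\widetilde{E}_{m,X\setminus Y}^{-m,0}\twoheadrightarrow\widetilde{E}_{\infty,Y}^{-m,0}$; your explicit zig-zags are exactly what underlies the exactness of that row and the surjectivity, so the content is the same.
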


\begin{cor}  If $ m \ge1 $ and $\cl_{m,m}$ is surjective, then the Abel-Jacobi
map $ \lambda_m $ in~\eqref{E003} induces an injection
$$
\frac{j_{\ast}\CH^{m-1}(Y,m-1;\QQ)}{j_{\ast}(\ker\del^{Y}_{R})} \hookrightarrow J(Gr_{-m}^{W})
\,.
$$
\end{cor}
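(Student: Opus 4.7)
The plan is to deduce this corollary by combining Theorem~\ref{T01}(ii) with the preceding Proposition~\ref{P004}, taking $ r=m $ throughout. First, recall from the discussion just after diagram~\eqref{E004} that when $ r=m $ the map $\b=\b_{m,m}\colon \widetilde{E}_1^{-m,0}\to \Gamma(Gr_0^W)$ is an isomorphism, so in particular $ \ker(\b)=0 $. Consequently the denominator term $ d_m(\ker(\b)\cap \widetilde{E}_m^{-m,0}) $ appearing in Theorem~\ref{T01}(ii) is zero.

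Now apply Theorem~\ref{T01}(ii) to the hypothesis that $\cl_{m,m}$ is surjective. The conclusion is that
\[
\frac{\ker\bigl(\lambda_m\big|_{d_m(\widetilde{E}_m^{-m,0})}\bigr)}{d_m\bigl(\ker(\b)\cap \widetilde{E}_m^{-m,0}\bigr)}=0,
\]
and by the previous paragraph this simplifies to $\ker\bigl(\lambda_m\big|_{\IM(d_m)}\bigr)=0$. Hence $ \lambda_m $ restricts to an injection on $\IM(d_m)\subseteq \widetilde{E}_m^{0,-m+1}$, with target $J(Gr_{-m}^W)$ as dictated by the bottom row of diagram~\eqref{E003}.

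Finally, invoke Proposition~\ref{P004} (applied with $ r=m $) to make the identification
\[
\IM(d_m)=\frac{j_{\ast}\CH^{m-1}(Y,m-1;\QQ)}{j_{\ast}(\ker \del^{Y}_R)}\,.
\]
Substituting this into the injection $\lambda_m\big|_{\IM(d_m)}\hookrightarrow J(Gr_{-m}^W)$ yields exactly the stated map.

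There is no genuine obstacle here; the argument is essentially a formal splicing of two results already proved in the paper. The only point meriting care is to verify that the spectral sequence bidegrees match: namely that $ d_m $ lands in $\widetilde{E}_m^{0,-m+1}$, and that $\lambda_m$ on that term targets $J(Gr_{-m}^W)$, both of which are precisely what diagram~\eqref{E003} records at $ k=m $.
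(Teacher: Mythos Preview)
Your proof is correct and follows essentially the same approach as the paper. The paper states at the beginning of Section~\ref{SEC07} that, by Theorem~\ref{T01}, surjectivity of $\cl_{m,m}$ implies $\lambda_m$ is injective on $d_m(\widetilde{E}_m^{-m,0})$, and then the corollary is an immediate consequence of the identification of $\IM(d_m)$ in Proposition~\ref{P004}; you have simply made explicit the use of $\ker(\b)=0$ when $r=m$ to reduce Theorem~\ref{T01}(ii) to that injectivity statement.
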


\begin{proof}[Proof of Proposition~\ref{P004}]
Consider the exact sequence
\begin{equation*}
\CH^{r}(X,m) \to \CH^{r}(X\bs Y,m) \to
 \CH^{r-1}(Y,m-1) \to  \CH^{r}(X,m-1)
\,.
\end{equation*}
In the weight filtered spectral sequence obtained from~\eqref{ss}
involved in computing $\CH^{r}(X\bs Y,m)$ we can restrict our
attention to those $ \Z_{0}^{i,j}(r) = z^{r+i}(Y^{[-i]},-j) $ with
$ i \le -1 $, which converges to 
$\CH^{r-1}(Y,m-1)$.  It also exists for $\CH^{r}(X,m-1)$ (using
the column where $ i=0 $).  We use indices to distinguish between the
various spectral sequences.

Note that $E_{1,X\bs Y}^{0,-m+1} = E^{0,-m+1}_{\infty,X} = \CH^{r}(X,m-1)$ and that 
for $\ell\geq 1$,
\begin{equation} \label{Es}
 E_{\ell,X\bs Y}^{-\ell,\ell-m} = E_{\ell,Y}^{-\ell,\ell-m} \twoheadrightarrow E_{\infty,Y}^{-\ell,\ell-m}
\end{equation}
since $d_{\ell}^{Y} = 0$ on $E_{\ell,Y}^{-\ell,\ell-m}$ as
its target is trivial.
Non-canonically, we have
\begin{alignat*}{1}
\CH^{r-1}(Y,m-1;\QQ) & \simeq \widetilde{E}^{-m,0}_{\infty,Y} 
   \oplus \ker(\del^{Y}_{R})
\\
\intertext{and}
\CH^{r}(X\bs Y,m;\QQ) &  \simeq \widetilde{E}^{-m,0}_{\infty,X\bs Y} 
   \oplus \ker(\del^{X\bs Y}_{R})
\,.
\end{alignat*}
Thus
\begin{equation*}
j_{\ast}\CH^{r-1}(Y,m-1;\QQ) \simeq 
\frac{\CH^{r-1}(Y,m-1;\QQ)}{\CH^{r}(X\bs Y,m;\QQ)}
\simeq \frac{\widetilde{E}^{-m,0}_{\infty,Y}}{\widetilde{E}^{-m,0}_{\infty,X\bs Y}}
\oplus \frac{ \ker(\del^{Y}_{R})}{ \ker(\del^{X\bs Y}_{R})}
\,,
\end{equation*}
with the last term isomorphic with $ j_*(\ker(\partial_R^Y)) $.
One has a commutative diagram
\begin{equation*}
\begin{matrix}
&&&&\widetilde{E}^{-m,0}_{m,X\bs Y}\\
&\\
&&&&\biggl\downarrow&\searrow^{d_{m}}\\
&\\
0&\to&\widetilde{E}^{-m,0}_{\infty,X\bs Y}&\to&
\widetilde{E}^{-m,0}_{\infty,Y}&\to&\widetilde{E}^{0,-m+1}_{m,X\setminus Y}
\end{matrix}
\end{equation*}
with surjective vertical map by~\eqref{Es}.
One sees the bottom row is exact by comparing the spectral
sequences $ E_{m,X\setminus Y}^{p,q} $ and $ E_{m,Y}^{p,q} $
at $ (p,q) = (-m,0) $.  Hence
$ j_{\ast}\CH^{r-1}(Y,m-1;\QQ) \simeq \IM(d_{m}) \oplus j_*\ker(\partial_R^Y)$.
\end{proof}

{\it  We now restrict to the case $r\geq m$}
because $\IM(d_{m}) = 0$ for $r<m$ as $\CH^{r-m}(Y^{[m]}) = 0$ (see (\ref{E666})).
Let $\xi \in CH^{r}(X,m-1)$.
Then $|\xi| \subset X\times \Delta^{m-1}$ is of codimension $r$,
and hence $W := \ol{\text{\rm Pr}_{1}(|\xi|)}$ is of codimension 
$(r-m) +1\geq 1$ in $X$. By Hironaka, there is a proper modification
diagram
\begin{equation}\label{E006}
\begin{matrix} \sigma^{-1}(W)&=:&\ol{Y}&{\buildrel \ol{j}
\over\hookrightarrow}&\ol{X}\\
&\\
&&\downarrow&&\wr\wr\downarrow \sigma\\
&\\
&&W&{\buildrel \ol{j}_{1}\over\hookrightarrow}&X
\end{matrix}
\end{equation}
where $\ol{Y}$ is a NCD. Let $U = X\bs W$.
We have a corresponding diagram
with $ \CH_{\ol{Y}}^r(\ol{X},m-1;\QQ) = \CH^{r-1}(\ol{Y},m-1;\QQ) $,
\begin{equation}\label{E007}
\begin{matrix}
\ol{j}_{\ast}\CH^{r}_{\ol{Y}}(\ol{X},m-1;\QQ)&\hookrightarrow&
\CH^{r}(\ol{X},m-1;\QQ)&\to&\CH^{r}(U,m-1;\QQ)\\
&\\
\sigma_{\ol{Y},\ast}\downarrow\quad&&\quad\downarrow\sigma_{\ast}&&||\\
&\\
\ol{j}_{1,\ast}\CH^{r}_{W}(X,m-1;\QQ)&\hookrightarrow&\CH^{r}(X,m-1;\QQ)&
\to&\CH^{r}(U,m-1;\QQ)
\end{matrix}
\end{equation}
where $ \xi $ is in $ \ol{j}_{1,\ast}\CH^{r}_{W}(X,m-1;\QQ) $.
Obviously $\sigma_{\ast}\circ\sigma^{\ast} =$ Identity implies
that $\sigma_{\ast}$ is onto. A diagram chase shows that 
$\sigma_{\ol{Y},\ast}$ is a surjection as well. As $\ol{X}$ is
obtained from $X$ by a sequence of blow-ups with non-singular centres,
it is clear
from the well-known motivic decomposition of $\ol{X}$ with respect $X$,
that the (higher) Chow group of $\ol{X}$ involves that
of $X$ and of smooth irreducible divisors. Hence it
follows that $\ker(\sigma_{\ast}) \subset \ol{j}_{\ast}(\ker 
\del^{\ol{Y}}_{R})$. We deduce the isomorphisms
\[
\frac{\CH^{r}(\ol{X},m-1;\QQ)}{\ol{j}_{\ast}\CH^{r}_{\ol{Y}}
(\ol{X},m-1;\QQ)} \simeq \frac{\CH^{r}(X,m-1;\QQ)}{\ol{j}_{1,\ast}
\CH^{r}_{W}(X,m-1;\QQ)}
\,,
\]
\[
\frac{\ol{j}_{\ast}\CH^{r}_{\ol{Y}}(\ol{X},m-1;\QQ)}{\ol{j}_{\ast}
(\ker \del_{R}^{\ol{Y}})} \simeq \frac{\ol{j}_{1,\ast}
\CH^{r}_{W}(X,m-1;\QQ)}{\sigma_{\ast}\circ 
\ol{j}_{\ast}(\ker \del_{R}^{\ol{Y}})}
\,.
\]
Let
$$
\Xi^{r,m}_{X} := \sum_{(\ol{X},\ol{Y})}\sigma_{\ast}
\circ\ol{j}_{\ast}(\ker \del^{\ol{Y}}_{R})
\,,
$$
where $ (\ol{X},\ol{Y}) $ runs over all proper modifications~\eqref{E006}
for all $ W \subset X $ of codimension at
least~1.  By construction we have
$$
\CH^{r}(X,m-1;\QQ) = \sum_{(\ol{X},\ol{Y})}\sigma_{\ast}
\circ\ol{j}_{\ast}\big(\CH^{r}_{\ol{Y}}
(\ol{X},m-1;\QQ)\big).
$$
Now recall the subspace $N^1\CH^r(X,m-1;\QQ)\subset \CH^r_{\hom}(X,m-1;\QQ)$
introduced in equation (\ref{E88}). It follows from the definitions
that 
\begin{equation*}
\Xi^{r,m}_{X} \subseteq N^1\CH^r(X,m-1;\QQ),
\end{equation*}
since, by~\eqref{E004},
$ \del_{R}^{\ol{Y}}(\xi) = 0 $ in $\widetilde{E}_1^{-m,0}$
implies $\b(\del_{R}^{\ol{Y}}(\xi)) = 0$ in $\Gamma\big(Gr_0^W\big)$.
The above inclusion is an  equality
if $r=m$, since in this case $\b$  is an isomorphism.
Let $\CH^{r}_{\dec}(X,m-1;\QQ) \subseteq \CH^{r}(X,m-1;\QQ)$
be the subspace generated by images of the form
$$
\CH^{a}(X,b;\QQ) \otimes \CH^{t}(X,s;\QQ) \to \CH^{r}(X,m-1;\QQ)
$$
under the product where
\[
a+s = r ,
\quad
b+t = m-1 ,
\quad
(a,b) \ne (0,0) ,
\quad
(s,t) \ne (0,0) .
\]

\begin{prop}\label{P005}
For $ r \ge m \ge 1 $ we have
$\CH^{r}_{\dec}(X,m-1;\QQ) \subseteq \Xi^{r,m}_{X}$.
\end{prop}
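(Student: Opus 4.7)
The plan is to exhibit, for each decomposable class $\xi = \eta\cdot\zeta$, a proper modification $\sigma\colon\ol X\to X$ together with a lift $\tilde\xi\in\ker\partial_R^{\ol Y}$ satisfying $\sigma_*\ol j_*(\tilde\xi)=\xi$. Writing $\eta\in\CH^a(X,b;\QQ)$ and $\zeta\in\CH^t(X,s;\QQ)$ with $a+s=r$ and $b+t=m-1$, one first observes that $\CH^0(X,e;\QQ)=0$ for $e\ge1$ (the only possible generator $X\times\square^e$ being degenerate), so the non-triviality hypothesis on the two factors forces both codimensions to be positive. Using commutativity of the higher Chow product one then exchanges the factors if necessary to arrange that the second factor's simplicial degree $d$ is strictly positive; this is always possible when $m\ge2$, and the exceptional case $m=1$ is addressed at the end.

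Taking $W\subset X$ to be the Zariski closure of the projection of $|\eta|$ to $X$, of codimension $\ge 1$, Hironaka provides a proper birational $\sigma\colon\ol X\to X$ which is an isomorphism over $X\setminus W$ and for which $\ol Y:=\sigma^{-1}(W)$ is a normal crossing divisor with smooth components. Since $\sigma^*\eta$ is supported on $\ol Y$, it defines $\tilde\eta\in\CH^{a-1}(\ol Y,b;\QQ)$ via $\CH^a_{\ol Y}(\ol X,b;\QQ)\simeq\CH^{a-1}(\ol Y,b;\QQ)$. Setting
\[
\tilde\xi\;:=\;\tilde\eta\cdot\sigma^*\zeta\;\in\;\CH^{r-1}(\ol Y,m-1;\QQ)
\]
through the $\CH^*(\ol X,*)$-module structure on $\CH^*(\ol Y,*)$, the projection formula together with $\sigma_*\sigma^*=\mathrm{id}$ (a proper birational morphism of smooth varieties) yields $\sigma_*\ol j_*(\tilde\xi)=\eta\cdot\zeta$, so it remains to verify $\partial_R^{\ol Y}(\tilde\xi)=0$.

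For this, recall that the sub-double-complex of~\eqref{ss} with $i\le -1$ computes $\CH^{r-1}(\ol Y,m-1;\QQ)$ in total degree $-m$, and the top residue is the projection onto $E^{-m,0}_\infty$, whose target is a subquotient of $\CH^{r-m}(\ol Y^{[m]};\QQ)$. The analogous sub-complex for $\tilde\eta$ lives in total degree $-(b+1)$, with non-zero entries at bidegrees $(i_1,j_1)$ satisfying $i_1+j_1=-(b+1)$, $i_1\in\{-(b+1),\dots,-1\}$, and $j_1\in\{-b,\dots,0\}$. Since $\sigma^*\zeta$ comes from the smooth ambient $\ol X$ and carries simplicial degree $d$, multiplication by it preserves the stratum index $i_1$ and shifts the simplicial index by $-d$; hence $\tilde\xi$'s non-zero components occupy bidegrees $(i_1,j_1-d)$. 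A non-zero contribution to $E^{-m,0}_\infty$ would require $i_1=-m$ and $j_1=d$, but $j_1\le 0$ combined with $d\ge 1$ makes this impossible, so $\partial_R^{\ol Y}(\tilde\xi)=0$ and $\eta\cdot\zeta\in\sigma_*\ol j_*(\ker\partial_R^{\ol Y})\subseteq\Xi^{r,m}_X$.

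The principal technical point to justify is the bigraded behaviour used above: that multiplication by a class from the smooth ambient $\ol X$ acts on the weight-filtered double complex attached to $\ol Y^{[\bullet]}$ by preserving the stratum column and shifting only the simplicial row. This reduces to compatibility of higher Chow intersection products with pullback to the smooth faces of the simplicial scheme $\ol Y^{[\bullet]}$ and with the Gysin maps relating them. Granting this compatibility, the outstanding case $m=1$ — where the reduction to $d\ge 1$ is unavailable — is to be handled by a separate coniveau-type argument that I do not detail here.
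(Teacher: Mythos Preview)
Your overall strategy matches the paper's---lift one factor of the product to a normal-crossing divisor on a modification, cap with the pullback of the other, and argue via the bigrading of the weight-filtered complex that the result has no component at bidegree $(-m,0)$, hence lies in $\ker\del_R^{\ol Y}$---and your argument for this last step is correct. The genuine gap is the unproved assertion that $W$, the closure of the projection of $|\eta|\subset X\times\square^{\,b}$ to $X$, has codimension $\ge 1$. A class in $\CH^a(X,b)$ is supported in codimension $\ge a-b$ on $X$, and knowing only $a>0$ does not force this to be positive. Concretely, with $r=m=3$ take $\eta\in\CH^1(X,1)\simeq\CC^\times$ and $\zeta\in\CH^2(X,1)$: your swap criterion (positive simplicial degree on the second factor) is already met, yet $\eta$ is represented by $X\times\{\text{pt}\}$ and projects onto all of $X$, so there is no proper $W$ to resolve and no $\ol Y$ to lift to. The paper uses the hypothesis $r\ge m$ precisely at this point: since the sum of codimensions equals $r\ge m>m-1$, which is the sum of simplicial degrees, at least one factor satisfies codimension $>$ simplicial degree, and \emph{that} is the factor to be lifted. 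The residue-vanishing condition you identified (ambient factor of positive simplicial degree, equivalently lifted factor of simplicial degree $<m-1$) is then checked afterwards, with a further role-reversal when it fails. Your swap was organized around the wrong inequality.

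Two smaller points. Your displayed constraints $a+s=r$, $b+t=m-1$ are inconsistent with $\eta\in\CH^a(X,b)$, $\zeta\in\CH^t(X,s)$, since the product then lands in $\CH^{a+t}(X,b+s)$; and you reuse the letter $d$ for a simplicial degree although it already denotes $\dim X$. Finally, the case $m=1$ is deferred to an unspecified ``coniveau-type'' argument, so your proof remains incomplete there.
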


\begin{rem}
It is not clear if equality holds in the above proposition.
\end{rem}

\begin{proof}[Proof of Proposition~\ref{P005}]
Consider $\xi = \xi_{1}\otimes \xi_{2}\in 
\CH^{a}(X,b;\QQ)\otimes \CH^s(X,t;\QQ)$, where $b,\ t \leq m-1$. 
Note that $\CH^{i}(X,j)$ is supported in codimension $i-j$,
namely 
\[
\CH^{i}(X,j) = \sum_{{\rm cd}_XY \geq i-j}\IM\big(\CH^i_Y(X,j) \to \CH^i(X,j)\big).
\]
Also $a\leq b$ and $s\leq t$
$\Rightarrow r=a+s \leq b+t = m-1$, which is not the case as we are assuming
$r\geq m$.
Therefore we can assume say $s>t$.  Next, if $t=m-1$, then
$b=0$, and thus  $a> 0 = b$ implies $\xi_{1}$ is supported
on a divisor in $X$. This scenario can be handled
in the same way as in the case where we assume
that $s>t$ and $t<m-1$. Namely, since $ s>t $, we can assume that
$\xi_{2}$ is supported on some $Y\subset X$ of codimension $1\leq s-t$,
and by the surjectivity of $\sigma_{\ol{Y},\ast}$ in diagram (\ref{E007}),
we can assume without loss of generality that $Y\subset X$ is a NCD.
Thus for some $\xi_{2}^{Y}\in \CH^{s-1}(Y,t;\QQ)$,
$\xi_{2}^{Y}\mapsto \xi_{2}$. Since $X$ is smooth, we have
the product (\cite{B}):
$$
\CH^{a}(X,b) \times \CH^{s-1}(Y,t;\QQ) \to \CH^{r-1}(Y,m-1;\QQ),
$$
which defines $\xi_{1}\cap \xi_{2}^{Y} \in \CH^{r-1}(Y,m-1;\QQ)$.
But since $\xi_{1}$ can be assumed in general position with
respect to $Y$, and together with $t < m-1$, we
have $\del^{Y}_{R}\big(\xi_{1}\cap \xi_{2}^{Y}\big) = 0$. 
\end{proof}

\section{ Noether-Lefschetz for a family of surfaces}\label{SEC013}

Let $X/\CC$ be a smooth projective surface, $Y = \bigcup_{j=1}^n Y_i\subset X$ a NCD
(with smooth $Y_i$'s)  with open complement $U := X\bs Y$. The work
of Deligne (\cite{De}, Cor. 3.2.13 and 3.2.14) implies that 
$F^2H^2(U,\CC) = H^0(X,\Omega^2_X\langle Y\rangle)$, where
$\Omega^2_X\langle Y\rangle$ is the sheaf of rational $2$-forms
on $X$, regular on $U$ with logarithmic poles along $Y$. In particular
if we denote by $H(U)$,  the space of regular algebraic $2$-forms on $U$
with $\QQ(2)$-periods, then
\begin{equation}\label{E555}
\Gamma H^2(U,\QQ(2)) := H^2(U,\QQ(2)) \cap F^2H^2(U,\CC) \subset H(U),
\end{equation}
and where we allow for the possibility that the left hand side of (\ref{E555}) is
non-zero. 
Let $Z(Y)$ be the singular set of $Y$. For each $Y_j$ we choose
distinct points $\{P_j,Q_j\} \subset Y_j\bs \{Y_j\cap Z(Y)\}$. Now
let's modify $X$ by  blowing it up along $\{P_1,Q_1,...,P_n,Q_n\}$
and call this $X'$. So in particular  the strict transform $Y'$ of $Y$ is a copy of 
$Y$ itself.  On  $U' := X'\bs Y'$ we have for each $j= 1,...,n$
an interesting real $2$-cycle $\gamma_j$ obtained as follows: 
Take the complement of a small disc in the blowup 
of $P_j$, the complement of a small disc in the blowup of $Q_j$, and a small tube in $X\bs Y$
along a path in $Y_j$ from $P_j$ to $Q_j$ that so that the end circles meet the circles of the two 
previous parts.  Then by a standard residue argument,
integrating an element $\omega \in H(U')$  against 
$\gamma_j$ gives us essentially $2\pi\I$ times the integral along the 
part in $Y_j$  from $P_j$ to $Q_j$ of the residue of 
$\omega$ along $Y$, where we observe that $\omega$ restricts to zero on the other two 
parts of $\gamma_j$, viz., there are no non-zero holomorphic $2$-forms living
on any  subset of $\PP^1$.

\begin{figure}[htbp]
\begin{center}
\input{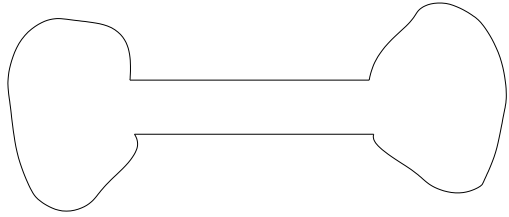}
\caption{The cycle $ \gamma_j $}
\label{dogbonecycle}
\end{center}
\end{figure}

\comment
\centerline{Picture of $\gamma_j \subset U'$}
\[
\begin{matrix}\bigcap_{--------------}\bigcap\\
\big|\big(\big)\hskip.53in\big(\big)\hskip.53in\big(\big)\big|\\
\bigcup^{--------------}\bigcup\\
&\\
\cap\hskip1.35in\cap\\
&\\
\ \PP^1\hskip1.3in\PP^1
\end{matrix}
\]
\endcomment
Note that this integral is determined by the finite dimensional $\QQ$-vector 
space of residues on $Y$.  So pick $P_j$ and $Q_j$ sufficiently general in $Y_j$ 
so that we can never 
end up in $\QQ(2)$ with this integral except with the trivial residue. Then doing this for all 
$j=1,...,n$, we arrive at $U'$ for which 
$\Gamma H^2(U',\QQ(2)) = 0$, using the fact that $H^0(X',\Omega^2_{X'})
\cap H^2(X',\QQ(2)) = 0$. We deduce:

\begin{theorem} Let 
\[
t \in B:= \biggl\{(P_1,Q_1,...,P_n,Q_n) \in \prod_{j=1}^n Y_j^2 \left| \begin{matrix}
P_j\ne Q_j\\ \{P_j,Q_j\}\in Y_j\bs \{Y_j\cap  Z(Y)\}\\
\ j=1,...,n\end{matrix}\right\},
\]
with corresponding family $\{U'_t\}_{t\in B}$. Then $\Gamma H^2(U_t',\QQ(2)) = 0$ for a very general point
$t$ in $B$. 
\end{theorem}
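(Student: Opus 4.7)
The plan is a Noether--Lefschetz-style argument. Fix a contractible open neighborhood $U_0 \subset B$ of a reference point $t_0$ and trivialize the local system $\{H^2(U'_t, \QQ(2))\}_{t \in U_0}$ over $U_0$ to the fixed, countable, $\QQ$-vector space $V := H^2(U'_{t_0}, \QQ(2))$. For each $v \in V$ the Hodge locus
\[
HL(v) := \{t \in U_0 : v \in F^2 H^2(U'_t, \CC)\}
\]
is an analytic subvariety of $U_0$, and $\{t \in U_0 : \Gamma H^2(U'_t, \QQ(2)) \neq 0\} = \bigcup_{v \in V \setminus \{0\}} HL(v)$ is a countable union. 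Since the complement of a countable union of proper analytic subvarieties consists of very general points, it suffices to show that $HL(v) \subsetneq U_0$ for every nonzero $v \in V$.

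Fix such a nonzero $v$ and assume, for contradiction, that $HL(v) = U_0$. Then $v$ is represented over $U_0$ by a holomorphic section $\omega_t \in H^0(X'_t, \Omega^2_{X'_t}\langle Y'_t\rangle)$. The residue along each component $Y_j$ is governed by the weight filtration of the MHS on $H^2(U'_t, \QQ)$, which is topologically defined; hence the residue $\eta_j := \mathrm{Res}_{Y_j}(v) \in H^0(Y_j, \Omega^1_{Y_j}\langle Y_j \cap Z(Y)\rangle)$ is a $t$-independent meromorphic 1-form on $Y_j$. If $\eta_j = 0$ for every $j$, then $\omega_t$ has no poles and extends to $H^0(X'_t, \Omega^2_{X'_t}) = H^0(X, \Omega^2_X)$ (by birational invariance of holomorphic $2$-forms on smooth surfaces), so $v$ is represented in $H^2(U'_t, \CC)$ by the pullback of some $\eta \in H^0(X, \Omega^2_X)$. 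But such a pullback gives a rational class in $H^2(U'_t, \QQ(2))$ only if $[\eta] \in H^{2,0}(X)$ equals a rational class modulo algebraic $(1,1)$-classes, which forces $[\eta] = 0$ because $H^{2,0}(X) \cap \overline{H^{2,0}(X)} = 0$. Hence $v = 0$, a contradiction.

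Therefore some $\eta_{j_0}$ is nonzero. Now invoke the cycle $\gamma_{j_0}(t)$ from the construction: built continuously from pieces of the exceptional divisors over $P_{j_0}(t), Q_{j_0}(t)$ and a tube in $X \setminus Y$ along a path in $Y_{j_0}$, its homology class in $H_2(U'_t, \ZZ)$ is constant under the trivialization. Pairing the constant rational class $v$ against $[\gamma_{j_0}(t)]$ thus yields a fixed element of $\QQ(2)$, while the residue computation from the preamble reads
\[
\int_{\gamma_{j_0}(t)} \omega_t = 2 \pi \I \int_{P_{j_0}(t)}^{Q_{j_0}(t)} \eta_{j_0}.
\]
Hence $\Phi(P_{j_0}, Q_{j_0}) := \int_{P_{j_0}}^{Q_{j_0}} \eta_{j_0}$ is constant (with value in $\QQ(1)$) as $(P_{j_0}, Q_{j_0})$ varies with all other coordinates of $t$ held fixed. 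Since $\eta_{j_0}$ is $t$-independent, differentiating in $Q_{j_0}$ yields $\eta_{j_0}(Q_{j_0}) = 0$ for every $Q_{j_0}$ in an open subset of $Y_{j_0}$, and so $\eta_{j_0} \equiv 0$: contradiction.

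The main obstacle I anticipate is justifying cleanly that the residue $\eta_j$ is really independent of $t$ under the chosen trivialization. This rests on the topological nature of the weight filtration on $H^2(U'_t)$ -- and hence of the residue morphism to $H^1(Y^{[1]},\QQ)(-1)$ -- as compared with the holomorphically varying Hodge filtration $F^\bullet$; the argument is standard for VMHS arising from smooth proper families of SNC pairs, but the family $\{(X'_t, Y'_t)\}_{t \in B}$ (whose complex structure does vary with $t$) needs to be set up carefully as such a family, after which the remaining variational manipulation is direct.
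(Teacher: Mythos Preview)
Your argument is correct and follows the same route as the paper: use the dogbone cycle $\gamma_j$ together with the residue identity $\int_{\gamma_j}\omega = 2\pi\I\int_{P_j}^{Q_j}\mathrm{Res}_{Y_j}(\omega)$ to show that for very general $(P_j,Q_j)$ no nonzero class in $F^2\cap H^2(U'_t,\QQ(2))$ can survive, the case of trivial residues being handled by $H^0(X',\Omega^2_{X'})\cap H^2(X',\QQ(2))=0$.

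The only difference is one of packaging. You run a full Noether--Lefschetz argument: trivialize the local system $H^2(U'_t,\QQ(2))$, assume some $HL(v)$ fills out an open set, deduce that the residue $\eta_{j}$ is $t$-independent, and differentiate the period in $Q_{j_0}$. The paper is more direct and avoids trivializing $H^2(U'_t)$ altogether: it simply notes that for \emph{any} $t$ the residue along $Y_j$ of a class in $\Gamma H^2(U'_t,\QQ(2))$ lies in the fixed finite-dimensional $\QQ$-vector space $H^0(Y_j,\Omega^1_{Y_j}\langle Y_j\cap Z(Y)\rangle)\cap H^1(Y_j\setminus(Y_j\cap Z(Y)),\QQ(1))$ (this is $t$-independent because $Y_j$ and $Z(Y)$ do not move), so one only has countably many candidate residue forms, and for each nonzero one the locus where $\int_{P_j}^{Q_j}\eta\in\QQ(1)$ is a countable union of proper analytic subsets of $Y_j^2$. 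This sidesteps the VMHS set-up you flag in your final paragraph. Your worry there is in any case easily resolved: the topological residue $H^2(U'_t,\QQ(2))\to H^1(Y_j\setminus(Y_j\cap Z(Y)),\QQ(1))$ is locally constant, and since $H^0(Y_j,\Omega^1_{Y_j}\langle\cdots\rangle)\hookrightarrow H^1(Y_j\setminus\cdots,\CC)$ is injective, the holomorphic residue form is determined by its (constant) cohomology class.
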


\section{Appendix: The Abel-Jacobi map revisited}\label{SEC05}

The purpose of this appendix is to establish the commutativity of diagram~\eqref{E003} above.
Due to its  technical nature,
the reader with pressing obligations
can easily skip this without losing sight  of the main results
of this paper. We first 
digress by describing the Abel-Jacobi map
\[
\lambda_k : \widetilde{E}_k^{-m+k,-k+1} \to J\big(Gr^W_{-k}\big),
\]
where $Gr^W_{-k}$ is {described} in~\eqref{111}.
In the case $k=1$, viz.,
\[
\lambda_1 : \widetilde{E}_1^{-m+1,0} \to J\big(Gr^W_{-1}\big),
\]
this is  induced by the classical Abel-Jacobi map
\[
\CH^{r-m+1}_{\hom}(Y^{[m-1]};\QQ) \xrightarrow{\xi \mapsto\int_{\del^{-1}\xi}(-)}
 J\big(H^{2r-2m+1}(Y^{[m-1]},\QQ(r-m+1))\big).
\]
The Abel-Jacobi map
\[
\Phi_k: \widetilde{E}_2^{-m+k,-k+1} \to J\big(Gr^W_{-k}\big),
\]
is defined using the formula in \cite{KLM}. By degeneration of the
mixed Hodge complex spectral sequence at $E_2$ (Deligne), and a map of spectral sequences
from Chow groups to Hodge cohomology, together with functoriality of
the Abel-Jacobi map, the map $\Phi_k$ induces
$\lambda_k$ for all $k\geq 2$. So we need only describe
the map $\Phi_k$ explicitly. 
Let $\square^m:=({\PP^1} \backslash
\{1\})^m $ with coordinates $z_i$ and $2^{m}$ codimension one faces
obtained by setting $z_i=0,\infty$, and boundary maps
$\partial = \sum (-1)^{i-1}(\partial_{i}^{0}-\partial_{i}^{\infty})$,
where $\partial_{i}^{0},\ \partial_{i}^{\infty}$ denote
the restriction maps to the faces $z_{i}=0,\ z_{i}=\infty$ 
respectively.  Here we adopt the notation
in \cite{KLM} adapted to the cubical description of $\CH^{r}(X,m;\QQ)$,
with cycles lying in $z^r(X\times \square^m;\QQ)$, in general position
with respect to the $2^m$ faces of $\square^m$ as well as 
the real part $[-\infty,0]^m \subset \square^m$. 

\smallskip
Recall the Tate twist  $\QQ(r)$,
let $X/\CC$ be smooth projective with
$d = \dim X$, and put $\Dd^k_X :=$ sheaf of currents that act on compactly
supported $\CC$-valued $C^{\infty}$ forms of degree $2d-k$.  Note that 
\[
\Dd_X^{\bullet} = \bigoplus_{p+q=\bullet}\Dd_X^{p,q},
\]
where $\Dd^{p,q}_X$ acts on corresponding $(d-p,d-q)$ forms. Let
$\C^k_X(\QQ(r))$ be the sheaf of Borel-Moore chains of real codimension
$k$ in $X$ with $\QQ(r)$ coefficients. One has an inclusion $\C^k_X(\QQ(r)) \subset \Dd_X^k$.
Now put
\[
{\M}^{\bullet}_{\Dd} = {\rm Cone}\big\{\C_X^{\bullet}(X,
\QQ(r))) \bigoplus 
F^r\Dd_X^{\bullet}(X)\to 
\Dd_X^{\bullet}(X)\big\}[-1].
\]
The cohomology of this complex at $\bullet = k$
is precisely the Deligne cohomology $H^k_{\Dd}(X,{\QQ}(r))$
(see \cite{KLM}).

We recall the cycle class map
\[
\Psi_{r,m} : \CH^r(X,m;\QQ) \to H^{2r-m}_{\Dd}(X,\QQ(r)),
\]
in terms of the cubical description of $\CH^r(X,m)$.  Note that
for $m\geq 1$, 
\[
H^{2r-m}_{\Dd}(X,\QQ(r) \simeq J\big(H^{2r-m-1}(X,\QQ(r))\big),
\]
and under this identification, $\Psi_{r,m}$ is the Abel-Jacobi map.
On $\square^m$
we introduce the currents
\[
\Omega_{\square^m} := \bigwedge_{1}^{m}d\log z_{j}
\]
\[
T_{\square^m} :=  T_{z_1}\cap\cdots\cap T_{z_m} = \int_{[-\infty,0]^m}(-) =: \delta_{[-\infty,0]^m},
\]
where $T_{z_j}$ is integration on $\big\{(z_1,...,z_m)\in \square^m\ \big|\ z_j\in [-\infty,0]\big\}$.
\[
R_{\square^m} := \log z_1d\log z_2\wedge\cdots\wedge d\log z_m
- (2\pi\I) \log z_2d\log z_3\wedge\cdots\wedge d\log z_m\cdot {T_{z_1}}
\]
\[
+\cdots + (-1)^{m-1}(2\pi\I)^{m-1}\log z_m\cdot \{T_{z_1}\cap \cdots\cap T_{z_{m-1}}\},
\]
where $\log$ has the principle branch.
One has
\[
d R_{\square^m} = \Omega_{\square^m} - (2\pi\I)^mT_{\square^m} - 2\pi\I R_{\del\square^m}.
\]
We consider a cycle 
$\xi\in z^{r}(X\times \square^{m})$ in general position. One considers
the projections $\pi_{1}:|\xi| \to X,\ \pi_{2} : |\xi|\to \square^{m}$.
We put 
$$
R_{\xi} = \pi_{1,\ast}\circ \pi_{2}^{\ast}R_{\square^m},\quad
\Omega_{\xi} = \pi_{1,\ast}\circ \pi_{2}^{\ast}\Omega_{\square^m},\quad
T_{\xi} = \pi_{1,\ast}\circ \pi_{2}^{\ast}T_{\square^m}.
$$
Note that when $m=0$, $R_{\xi}  = 0$ (classical case!). Correspondingly
\[
d R_{\xi} = \Omega_{\xi} - (2\pi\I)^mT_{\xi} - 2\pi\I R_{\del\xi}
\,.
\]
Up to a normalizing constant, the cycle class map  $\Psi_{r,m}$ is induced by
\[
\xi \mapsto \big((2\pi\I)^mT_{\xi},\Omega_{\xi},R_{\xi}\big).
\]
Recall $d=\dim X$. The Abel-Jacobi map 
\begin{equation*}
\Phi_{r,m} : \CH^r_{\hom}(X,m;\QQ) \to
\frac{F^{d-r+1}H^{2d-2r+m+1}(X,\CC)^{\vee}}{H_{2d-2r+m+1}(X,\QQ(d-r))}
\,,
\end{equation*}
where we described $ J\big(H^{2r-m-1}(X,\QQ(r))\big) $ using the Carlson isomorphism,
is defined as follows. If $\xi\in \CH^r(X,m;\QQ)$, then by a moving lemma
\cite[Lemma~8.14]{K-L}, we can assume that $\xi$ is in general position
with respect to the real cube $[-\infty,0]^m\subset \square^m$. 
Furthermore, $m > 0 $ implies $ T_{\xi} = dT_{\zeta}$,
i.e., $\pm\del\zeta = \xi\cap \{X\times [-\infty,0]^m\}$, $\Omega_{\xi} =  -dS$, for
some $S\in F^r\Dd_X^{2r-m-1}(X)$, so up to a normalizing constant and for
$\omega\in F^{d-r+1}H^{2d-2r+m+1}(X,\CC)$, we have
\[
\Phi_{r,m}(\xi)(\omega) = S (\omega)+  (-2\pi\I)^m\int_{\zeta} \omega + R_{\xi}(\omega) =   
(-2\pi\I)^m\int_{\zeta}\omega  + R_{\xi} (\omega),
\]
where the latter equality stems from Hodge type considerations.

\smallskip
Now referring to~\eqref{E003}, we fulfill a promise made earlier, viz.,

\begin{theorem} \label{diagrams-commute}
The diagram~\eqref{E003} is commutative. Specifically,
\[
h_k\circ \a_k = \lambda_k\circ d_k.
\]
\end{theorem}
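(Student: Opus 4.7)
The approach is to use the explicit KLM currents description of the Abel-Jacobi map recalled in this appendix, and proceed by induction on $k$. The key geometric observation is that an element $\xi \in \widetilde{E}_k^{-m,0}$ lifts to a compatible system of cycles $(\xi_0 = \xi, \xi_1, \ldots, \xi_{k-1})$ with $\xi_i$ on $Y^{[m-i]} \times \square^{i}$ satisfying $\partial \xi_i = \pm\Gy(\xi_{i-1})$; such a system is precisely the data witnessing that $\xi$ survives through level $k$ of the spectral sequence, and $d_k(\xi) = \Gy(\xi_{k-1}) \in \widetilde{E}_k^{-m+k,-k+1}$ measures the obstruction to continuing the lift.

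First I would handle the base case $k=1$. Here $\alpha_1 = \kappa \circ \beta$, and $\beta(\xi)$ is the Hodge class of $\Gy(\xi)$ in $Gr^W_0 H^{2r-m}(U,\QQ(r))$. Lifting this Hodge class back to $H^{2r-m}(U,\QQ(r))$ itself determines an extension of $\QQ(0)$ by $W_{-1}$ whose class is $\kappa(\beta(\xi)) \in J(W_{-1})$. The projection $h_1$ to $J(Gr^W_{-1})$ extracts the component coming from $H^{2r-2m+1}(Y^{[m-1]},\QQ(r-m+1))$ modulo Gysin contributions, and a direct extension-class computation, using that $\Gy(\xi)$ is null-homologous on $Y^{[m-1]}$, identifies this with the classical Abel-Jacobi image of $\Gy(\xi) = d_1(\xi)$, i.e.\ with $\lambda_1(d_1(\xi))$.

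For the inductive step, I would assemble the lift $(\xi_0, \ldots, \xi_{k-1})$ into a cocycle in the total complex $\Tot^\bullet \Z(r)$ and form the KLM triple $(T,\Omega,R)$ attached to this total cycle. The resulting class in $H^{2r-m+1}_\Dd(U,\QQ(r))$ is the Abel-Jacobi image of the corresponding element of $\CH^r(U,m)$. The weight filtration $W_{-\bullet}$ on Deligne cohomology is induced by the simplicial filtration on $Y^{[\bullet]}$, and the class lies in the image of $J(W_{-k})$ precisely because $(\xi_0, \ldots, \xi_{k-1})$ only involves $Y^{[m]}, \ldots, Y^{[m-k+1]}$; the projection $h_k$ to $J(Gr^W_{-k})$ then picks out only the contribution of $\xi_{k-1}$, which is the KLM triple for $\Gy(\xi_{k-1}) = d_k(\xi)$ viewed as a null-homologous class in $\CH^{r-m+k}(Y^{[m-k]},k-1)$. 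This is by definition $\lambda_k(d_k(\xi))$, giving the desired commutativity $h_k \circ \alpha_k = \lambda_k \circ d_k$. Note that the inductive definition of $\alpha_{k+1}$ via the left square requires $\alpha_k(\widetilde{E}^{-m,0}_{k+1}) \subseteq J(W_{-k-1}) = \ker(h_k)$, which is exactly what the identity $h_k \circ \alpha_k = \lambda_k \circ d_k$ provides on $\widetilde{E}^{-m,0}_{k+1} = \ker(d_k|_{\widetilde{E}^{-m,0}_k})$; so proving the commutativity and proving that $\alpha_{k+1}$ is well-defined happen in tandem.

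The main obstacle will be setting up the filtration compatibility carefully: one must verify that the weight filtration on the KLM complex $\M^\bullet_\Dd$, the simplicial filtration induced by the NCD $Y$, and the spectral sequence filtration on $\CH^r(U,m)$ arising from~\eqref{ss} all match level by level under the cycle class map. Concretely, this amounts to checking that when $R_\xi$ is formed from the total cycle, the terms $\log z_j$ and $T_{z_j}$ restrict and project to the simplicial pieces in the expected way. Once this compatibility is in place via the Deligne mixed Hodge complex machinery (in particular the $E_2$-degeneration invoked in the definition of $\lambda_k$ for $k \geq 2$), the comparison of the two resulting extension classes is essentially formal, and the theorem follows by induction.
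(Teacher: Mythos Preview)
Your proposal is correct and follows essentially the same approach as the paper: both arguments lift $\gamma\in\widetilde{E}_k^{-m,0}$ to a compatible sequence $(\gamma=\xi_0,\zeta_1,\dots,\zeta_{k-1})$ with $\partial\zeta_i=\pm\Gy(\zeta_{i-1})$, identify $d_k(\gamma)=\Gy(\zeta_{k-1})$, and then compare the extension class $\alpha_k(\gamma)\in J(W_{-k})$ (computed via the difference of rational and Hodge lifts of $\beta(\gamma)$) with the KLM Abel-Jacobi current $R_{\Gy(\zeta_{k-1})}$ after projection to $J(Gr^W_{-k})$. The paper simply makes explicit the $\D$-coboundary manipulations (e.g.\ subtracting $\D R_{\zeta_1}$ to strip off the level-$(-1)$ piece) that you package as ``filtration compatibility,'' so your outline matches the paper's proof step for step.
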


\begin{proof}
We have to unravel the definitions.
We use the simplicial complex $Y^{[\bullet]}\to Y\hookrightarrow X$
as a way of describing $W_jH^{2r-m}(U,\QQ(r))$. Let 
\[
\K_{\QQ}^{2r-2m+i}(Y^{[m-i]}) = \C^{2r-2m+i}(Y^{[m-i]},\QQ(r-m+i))
\]
\[
\K_{\CC}^{2r-2m+i}(Y^{[m-i]}) = \Dd^{2r-2m+i}(Y^{[m-i]})
\]
\[
\D = d \pm \Gy
\]
A class $\xi\in W_0H^{2r-m}(U,\QQ(r))$ is represented by a
$\D$-closed $(m+1)$-tuple 
\[
\xi = (\xi_0,\xi_1,...,\xi_m) \in \bigoplus_{i=0}^m\K_{\QQ}^{2r-2m+i}(Y^{[m-i]}).
\]
With $W_j := W_jH^{2r-m}(U,\QQ(r))$, consider the short
exact sequence
\[
0 \to W_{-1} \to W_0 \xrightarrow{\xi\mapsto\xi_0} Gr_0^W \to 0.
\]
Let us first describe $h_1\circ \a_1 : \widetilde{E}_1^{-m,0} \to J\big(Gr^W_{-1}\big)$.
Let $\gamma\in \widetilde{E}_1^{-m,0}$ and $\xi_0 = \b(\gamma)$.
In this case $\xi_0\in \Gamma\big(Gr_0^W\big)$ is in the image of $\xi\in W_0$. Likewise
$\xi_0$ is in the image of some $\xi^{\CC}\in F^0W_{0,\CC}$. The difference
$\xi-\xi^{\CC}\in W_{-1,\CC}$ maps to a class in $J\big(W_{-1}\big)$ which 
defines the Abel-Jacobi image of $\gamma$ in  $J\big(W_{-1}\big)$. We can
assume that $\xi-\xi^{\CC}$ is represented by the $\D$-closed $m$-tuple:
\[
 (\xi_1-\xi_1^{\CC},...,\xi_m-\xi_m^{\CC}) \in \bigoplus_{i=1}^m\K_{\CC}^{2r-2m+i}(Y^{[m-i]}).
 \]
with $\Gy(\xi_0) = \del\xi_1$. The corresponding value $h_1\circ \a_1(\gamma)
\in J\big(Gr_{-1}^W\big)$ is
given by the Abel-Jacobi membrane integral $\int_{\xi_1}(-)$, as the Hodge
contribution given by $\xi_1^{\CC}$ is trivial for Hodge type reasons.  This is easily seen to
be precisely $\lambda_1\circ d_1(\gamma)$, where we
recall that $d_1 = \Gy$. Thus $\lambda_1\circ d_1 = h_1\circ \a_1$. So now
suppose that $d_1(\gamma) = 0\in \widetilde{E}_1^{-m+1,0}$, 
i.e. $\gamma\in \widetilde{E}_2^{-m,0}$. This means that 
$\Gy(\gamma) = \del\zeta_1$, where $\zeta_1\in z^{r-m+1}(Y^{[m-1]},1;\QQ)$.
Then $d_2(\gamma) = \Gy(\zeta_1) \in \widetilde{E}_2^{-m+2,-1}$.
(We comment in passing that
using the aforementioned moving lemma in \cite{K-L}, we can assume that 
$\zeta_{1,\RR} := \zeta \cap Y^{[m-1]}\times [-\infty,0]$ is a proper
intersection, and hence that $\del \zeta_{1,\RR} = \Gy(\xi_0)$.)
Since $d_1(\gamma) = \Gy(\gamma)$ is a coboundary, it follows that $h_1\circ\a_1(\gamma) = 
\lambda_1\circ d_1(\gamma) = 0$,
and hence after removing classes in $W_{-1}+ F^0W_{-1,\CC}$, 
[specifically, $\del(\zeta_{1,\RR} - \xi_1) = \Gy(\xi_0)-\Gy(\xi_0) =0$, and using
$W_{-1}\twoheadrightarrow Gr_{-1}^W$, $d(\Omega_{\zeta_1}- \xi_1^{\CC})
= \Gy(\xi_0^{\CC}) - \Gy(\xi_0^{\CC}) = 0$ and using $F^0W_{-1,\CC} 
\twoheadrightarrow F^0Gr_{-1,\CC}^W$], we can assume
that $\xi-\xi^{\CC}$ is represented by the $\D$-closed $m$-tuple
\[
 (2\pi\I T_{\zeta_{1}} - \Omega_{\zeta_1},\xi_2-\xi_2^{\CC},...,\xi_m-\xi_m^{\CC}) \in \bigoplus_{i=1}^m\K_{\CC}^{2r-2m+i}(Y^{m-i]}),
 \]
where
\[
d\xi_2 = (2\pi\I)T_{\Gy(\zeta_{1})},\quad d\xi_2^{\CC} = \Omega_{\Gy(\zeta_1)}.
\]
Note that 
\[
dR_{\zeta_1} = \Omega_{\zeta_1} - (2\pi\I)T_{\zeta_{1}} -
(2\pi\I)R_{\del \zeta_1}  = \Omega_{\zeta_1} - (2\pi\I)T_{\zeta_{1}},
\]
as $R_{\del \zeta_1} = 0$ ($\del \zeta_1$ representing  the classical case!).
But $\del \Gy(\zeta_1) = \Gy(\del \zeta_1) = \Gy^2(\gamma) = 0$. Hence
working modulo the coboundary
\[
\D R_{\zeta_1} = dR_{\zeta_1} \pm  R_{\Gy(\zeta_1)},
\]
we can assume 
that $\xi-\xi^{\CC}$ is represented by the $\D$-closed $(m-1)$-tuple
\[
(\xi_2-\xi_2^{\CC} + R_{\Gy(\zeta_1)},\xi_3-\xi_3^{\CC},\dots,\xi_m-\xi_m^{\CC}) \in \bigoplus_{i=2}^m\K_{\CC}^{2r-2m+i}(Y^{m-i]})
\,.
 \]
So modulo $\D$-coboundary, $h_2\circ \a_2(\gamma)$ is
represented by the $d$-closed current
$ \xi_2 - \xi_2^{\CC} + R_{\Gy(\zeta_1)} $,
which is precisely $\lambda_2\circ d_2(\gamma)$.
Hence $h_2\circ\a_2=\lambda_2\circ d_2$. The general case
$h_k\circ\a_k=\lambda_k\circ d_k$ proceeds in a similar fashion.
\end{proof}

\end{document}